\theoremstyle{plain}
\newtheorem{theorem}[subsection]{Theorem}
\newtheorem*{thmnonumber}{Theorem}
\newtheorem{proposition}[subsection]{Proposition}
\newtheorem{lemma}[subsection]{Lemma}
\newtheorem{corollary}[subsection]{Corollary}
\newtheorem*{propnonumber}{Proposition}
\newtheorem*{lemma*}{Lemma}
\newtheorem*{theoremA}{Theorem A}
\theoremstyle{definition}
\newtheorem{definition}[subsection]{Definition}
\newtheorem{nothing}[subsection]{}
\newtheorem{nothing*}[subsection]{}
\newtheorem{example}[subsection]{Example}
\newtheorem{question}[subsection]{Question}
\newtheorem{notation}[subsection]{Notation}
\newtheorem{assumptions}[subsection]{Assumptions}
\newtheorem{assumption}[subsection]{Assumption}
\newtheorem{remark}[subsection]{Remark}
\newtheorem{subnothing*}[subtheorem]{}
\theoremstyle{remark}
\newcommand{\Spec}{		\operatorname{{\rm Spec}}}
\newcommand{\Proj}{		\operatorname{{\rm Proj}}}
\newcommand{\height}{		\operatorname{{\rm ht}}}
\newcommand{\trdeg}{	\operatorname{{\rm trdeg}}}
\newcommand{\Frac}{		\operatorname{{\rm Frac}}}
\newcommand{\Sing}{		\operatorname{{\rm Sing}}}
\renewcommand{\div}{	\operatorname{{\rm div}}}
\newcommand{\lcm}{		\operatorname{{\rm lcm}}}
\newcommand{\cotype}{		\operatorname{{\rm cotype}}}
\newcommand{\Div}{		\operatorname{{\rm Div}}}
\newcommand{\lb}{\langle}
\newcommand{\rb}{\rangle}
\newcommand{\codim}{\operatorname{{\rm\bf codim}}}
\newcommand{\setspec}[2]{\big\{\,#1\, \mid \,#2\, \big\}}
\newlength{\mylength}
\newcommand{\Integ}{\ensuremath{\mathbb{Z}}}
\newcommand{\Nat}{\ensuremath{\mathbb{N}}}
\newcommand{\Rat}{\ensuremath{\mathbb{Q}}}
\newcommand{\Comp}{\ensuremath{\mathbb{C}}}
\newcommand{\aff}{\ensuremath{\mathbb{A}}}
\newcommand{\proj}{\ensuremath{\mathbb{P}}}
\newcommand{\bk}{{\ensuremath{\rm \bf k}}}
\newcommand{\pgoth}{\mathfrak{p}}
\newcommand{\qgoth}{\mathfrak{q}}
\newcommand{\hgoth}{\mathfrak{h}}
\newcommand{\PPP}{\mathbb{P}}
\newcommand{\isom}{\cong}
\renewcommand{\epsilon}{\varepsilon}
\renewcommand{\phi}{\varphi}
\newcommand{\OSheaf}{\operatorname{\mathcal O}}
\newcommand{\rien}[1]{}
\title{Rationality of weighted hypersurfaces of special degree}
\author{Michael Chitayat}
\begin{document}

	\maketitle
	\begin{abstract} 
		Let $X \subset \PPP(w_0, w_1, w_2, w_3)$ be a quasismooth well-formed weighted projective hypersurface and let $L = \lcm(w_0,w_1,w_2,w_3)$. We characterize when $X$ is rational under the assumption that $L$ divides $\deg(X)$. Furthermore, we give a new family of normal rational weighted projective hypersurfaces with ample canonical divisor, valid in all dimensions, adding to the list of examples discovered by Koll$\rm{\acute a}$r. Finally, we determine precisely which affine Pham-Brieskorn threefolds are rational, answering a question of Rajendra V. Gurjar. 
	\end{abstract}
	
	\bigskip
	\noindent \textbf{Author information.} Michael Chitayat, University of Ottawa, \textit{mchit007@uottawa.ca}, ORCID: 0000-0002-0590-539X
	
	\bigskip
	\noindent \textbf{Keywords.} Commutative algebra, algebraic geometry, weighted complete intersections, rational varieties, Pham-Brieskorn varieties. 
	
	\bigskip
	\noindent \textbf{Statements and declarations.} The author has no competing interests or funding to declare. 
	
	\bigskip
	\noindent \textbf{Acknowledgements.} The author wishes to thank Rajendra V. Gurjar for proposing Question \ref{GurjarQuestion}, Pedro Garcia Sanchez for sharing some useful comments as well as the reference \cite{Brauer1942}, and Daniel Daigle for multiple comments and suggestions, the most important of which significantly shortened the proof of Theorem A.

	\section{Introduction}
	
	Let $X \subseteq \PPP^3$ be a smooth projective hypersurface over $\Comp$ of degree $d$. It is well known that 
	\begin{equation}\label{easyEquiv}
		\text{$X$ is rational} \iff d \leq 3 \iff \text{the geometric genus of $X$ is zero.}
	\end{equation}
	We consider the more general situation of weighted hypersurfaces $X$ in the weighted projective space $\PPP(w_0, w_1, w_2, w_3)$. This additional generality makes understanding when $X$ is rational more difficult; we give new results in this direction with subsequent applications. 
	
	\medskip
	
	A Pham-Brieskorn ring is a $\Comp$-algebra of the form
	$$
	B_{a_0, a_1, \dots,  a_n} = \Comp[X_0, X_1, \dots, X_n] / \lb X_0^{a_0} + X_1^{a_1} + \dots + X_n^{a_n} \rb
	$$
	where each $a_i \in \Nat^+$. These rings, their corresponding affine varieties and their singular points have been studied extensively for decades from many perspectives (see \cite{yoshinaga1974topological}, \cite{hefez1974intersection}, \cite{storch1984picard}, \cite{milnor19753}, etc.). This work was motivated by the following question of Rajendra V. Gurjar, to which we manage to provide a complete answer. 
	\begin{question}\label{GurjarQuestion}
		For which 4-tuples $(a_0, a_1, a_2, a_3)$ is $\Spec B_{a_0, a_1, a_2, a_3}$ rational?
	\end{question}
	
	We begin by defining an $\Nat$-grading on $B_{a_0, a_1, a_2, a_3}$ by declaring that $X_i$ is homogeneous of degree $w_i = L / a_i$, where $L = \lcm(a_0, a_1, a_2, a_3)$. Thus, $\Proj B_{a_0, a_1, a_2, a_3}$ is a hypersurface of the weighted projective space $\PPP(w_0,w_1,w_2,w_3)$. In view of Castelnuovo's Theorem, which states that rationality and unirationality are equivalent in dimension 2 over $\Comp$,
	it is clear that $\Spec B_{a_0, a_1, a_2, a_3}$ is rational if and only if $\Proj B_{a_0, a_1, a_2, a_3}$ is rational.
	As such, we instead answer the following question, which has the same answer as Question \ref{GurjarQuestion}. 
	\begin{question}\label{qProj}
		For which 4-tuples $(a_0, a_1, a_2, a_3)$ is $\Proj B_{a_0, a_1, a_2, a_3}$ rational?
	\end{question}
	
	To answer Question \ref{qProj}, we require a theorem on numerical semigroups. We say that a tuple $(d_1, \dots, d_n) \in (\Nat^+)^{n}$ is \textit{well-formed} if $\gcd(d_1, \dots, d_{i-1}, \hat{d_i}, d_{i+1}, \dots d_n) = 1$ for all $i \in \{1, \dots, n\}$.
	
	\begin{theoremA}\label{TheoremA}
		Suppose $(d_1,d_2,d_3,d_4) \in (\Nat^+)^4$ is well-formed, let $\Gamma = \lb d_1, d_2, d_3, d_4 \rb \subseteq \Nat$ be the numerical semigroup generated by $d_1,d_2,d_3,d_4$ and let $L = \lcm(d_1, d_2, d_3, d_4)$. 
		\begin{enumerate}[\rm(i)]
			\item If $N \geq \max\{ 2L-\sum_{m=1}^4 d_m, 0 \}$ then $N \in \Gamma$. 
			\item  If $n \in \Nat^+$ and $n \cdot L - \sum_{i=1}^4 d_i \in \Nat \setminus \Gamma$, then $n=1$ and $| \{ d_1, d_2 , d_3 , d_4 \} | = 2$.
			
		\end{enumerate}
	\end{theoremA}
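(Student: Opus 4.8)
The plan is to reduce everything to the two–generator Frobenius bound together with the arithmetic forced by well-formedness. First I would dispose of trivial cases: if some $d_i=1$ then $\Gamma=\Nat$ and both parts are immediate, so assume all $d_i\ge 2$. Well-formedness then supplies three facts I will use repeatedly: the overall gcd is $1$, so $\Gamma$ is cofinite; no prime divides three of the $d_i$; and for any splitting of the four indices into two pairs, the two pairwise gcds are coprime, and each pairwise gcd is coprime to the two generators in the other pair. I would also record that (ii) for $n\ge 2$ is instant from (i): if $n\ge 2$ then $nL-\sum_i d_i\ge 2L-\sum_i d_i$ and $\ge 0$, hence lies in $\Gamma$. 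So (ii) collapses to the single assertion that if $L-\sum_i d_i\ge 0$ is a gap, then $|\{d_1,d_2,d_3,d_4\}|=2$.

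The engine is the classical fact that for a pair with $g=\gcd(a,b)$ the semigroup $\lb a,b\rb$ contains every multiple of $g$ that is $\ge \lcm(a,b)-a-b+g$, together with the fact that for coprime $g,h$ every integer $\ge (g-1)(h-1)$ lies in $\lb g,h\rb$. Pairing $\{d_1,d_2\}$ with $\{d_3,d_4\}$ and writing $g=\gcd(d_1,d_2)$, $h=\gcd(d_3,d_4)$ (coprime, by well-formedness), I would add the two one-pair statements, using that $\Gamma$ contains the sumset $\lb d_1,d_2\rb+\lb d_3,d_4\rb$, to obtain $\Gamma\supseteq\{\,N : N\ge \lcm(d_1,d_2)+\lcm(d_3,d_4)+gh+1-\textstyle\sum_i d_i\,\}$.

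For part (i) it then suffices to find a pairing for which $\lcm(d_1,d_2)+\lcm(d_3,d_4)+gh+1\le 2L$. Setting $e=\gcd(\lcm(d_1,d_2),\lcm(d_3,d_4))$ and using that $g$ divides $\lcm(d_1,d_2)/e$ and $h$ divides $\lcm(d_3,d_4)/e$ (again from well-formedness), this reduces to an elementary inequality in the three integers $e$, $\lcm(d_1,d_2)/e$, $\lcm(d_3,d_4)/e$, since $L=\lcm(d_1,d_2)\lcm(d_3,d_4)/e$. I would show this holds for at least one of the three pairings whenever $|\{d_i\}|\ge 3$ — the key point being that, as each prime divides at most two of the $d_i$, one can always arrange that neither pair has $\lcm$ equal to all of $L$ — and check the low-complexity configurations $|\{d_i\}|\le 2$ by hand; e.g.\ $\{a,a,b,b\}$ with coprime $a,b$ gives $\lcm+\lcm+gh+1=a+b+ab+1\le 2ab=2L$ precisely because $(a-1)(b-1)\ge 2$. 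This yields $\Gamma\supseteq\{N: N\ge 2L-\sum_i d_i\}$ and hence (i).

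For the residual $n=1$ case of (ii) the threshold above is too weak, so I would argue directly through the reformulation $L-\sum_i d_i\in\Gamma \iff L=\sum_i e_i d_i$ with all $e_i\ge 1$. Assuming $|\{d_i\}|\ge 3$ and $L\ge\sum_i d_i$, I would peel off one generator and use that the other three generate a cofinite semigroup ($\gcd(d_2,d_3,d_4)=1$): choosing the coefficient $e_1$ of the peeled generator appropriately and absorbing the remainder into $\lb d_2,d_3,d_4\rb$ gives an all-positive representation of $L$, whence $L-\sum_i d_i\in\Gamma$; the configurations with $|\{d_i\}|\le 1$ force $L-\sum_i d_i<0$. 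This leaves $|\{d_i\}|=2$ as the only source of a gap of the form $L-\sum_i d_i$. The main obstacle is exactly this last step: the clean threshold argument proves (i) but is silent about the single value $L-\sum_i d_i$, so isolating precisely the two-distinct-values exception requires the hands-on representability argument (in particular, proving that some admissible $e_1\ge 1$ always works when there are at least three distinct generators) and verifying that the boundary/equality cases of the pairing inequality match up with that exception.
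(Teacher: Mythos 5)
Your derivation of the sumset threshold is correct: combining the two-generator bound for $\lb d_1,d_2\rb$ and $\lb d_3,d_4\rb$ with the Frobenius bound for the coprime pair $(g,h)$ does give $\Gamma\supseteq\{N : N\ge \lcm(d_1,d_2)+\lcm(d_3,d_4)+gh+1-\sum_i d_i\}$, and this route to part (i) is genuinely different from the paper's, which instead invokes Brauer's three-generator bound $F(d_1,d_3,d_4)\le L_{1,3}+L_{1,4}-d_1-d_3-d_4$ and splits into the cases $\lcm(d_1,d_3)=L$ and $\lcm(d_1,d_3)\neq L$. However, the ``key point'' you rely on to verify the inequality $\lcm(d_1,d_2)+\lcm(d_3,d_4)+gh+1\le 2L$ when $|\{d_i\}|\ge 3$ --- that one can always choose a pairing in which neither pair has lcm equal to $L$ --- is false. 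Take the well-formed tuple $(2,3,5,30)$: every pair containing $30$ has lcm $30=L$, so each of the three pairings has a pair whose lcm is all of $L$. (The needed inequality still happens to hold there, e.g.\ $6+30+5+1\le 60$, but your stated justification collapses; one would need a different case analysis, and the boundary cases are delicate.)

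The more serious gap is part (ii) for $n=1$, which is where essentially all the content of the theorem lives. Your reduction of (ii) to the case $n=1$ via part (i) is correct and matches the paper. But your argument that $|\{d_i\}|\ge 3$ together with $\alpha=L-\sum_i d_i\ge 0$ forces $\alpha\in\Gamma$ --- ``choosing the coefficient $e_1$ of the peeled generator appropriately and absorbing the remainder into $\lb d_2,d_3,d_4\rb$'' --- is not an argument; it is a restatement of what must be proved, and no mechanism is offered for why a suitable $e_1\ge 1$ exists. Threshold bounds (Brauer's, or your sumset bound) are structurally too weak here, since $\alpha$ sits below them, as you yourself observe. The paper devotes nearly all of Section \ref{Sec:Numerical} to exactly this step: first a long lemma showing that if some $d_i\mid d_j$ then $|\{d_1,d_2,d_3,d_4\}|=2$ (itself split into the sub-cases $g_{1,2}=1$, $g_{1,2}=2$, $g_{1,2}\ge 3$), and then, assuming no divisibility relation among the $d_i$, introducing $m=L/d_4$ and $f/g=L/(d_1d_2d_3)$ in lowest terms and proving that $g=g_{1,2}\,g_{1,3}\,g_{2,3}$ divides $m$, that $g_{i,j}>1$ for all distinct $i,j\in\{1,2,3\}$, and that $m\le 6$ --- three facts that are jointly contradictory. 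Nothing in your proposal supplies or replaces this fine structure, so as written the proof of part (ii) is missing its core.
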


	In addition to Theorem A, we prove the rationality of a special family of projective varieties in Proposition \ref{rationalAlldim}.  
	
	\begin{propnonumber}\label{rationalAlldim}
		Let $a,c \in \Nat^+$ and consider the graded polynomial ring $R = \bk_{c,\dots, c, a, \dots, a}[X_1, \dots, X_k, Y_1, \dots, Y_\ell]$ where $\bk$ is a field, $k \geq 1$, $\ell \geq 1$, $\deg(X_i) = c$ and $\deg(Y_j) = a$ for all $i,j$. Suppose $f = g(X_1, \dots, X_k) + h(Y_1, \dots, Y_\ell)$ is irreducible and homogeneous of degree $L = \lcm(a,c)$ and both $g(X_1, \dots, X_k)$ and $h(Y_1, \dots, Y_\ell)$ are non-zero. Then $\Proj(R / \lb f \rb)$ is rational over $\bk$. 
	\end{propnonumber}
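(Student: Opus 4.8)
The plan is to compute the function field of $\Proj(R/\lb f\rb)$ by hand and show it is purely transcendental over $\bk$. Write $A = R/\lb f \rb$. Since $f$ is irreducible, $A$ is a graded domain of Krull dimension $k+\ell-1$, so $\Proj A$ is integral of dimension $k+\ell-2$ and its function field is the degree-zero subfield $A_{(0)}$ of $\Frac(A)$; thus it suffices to show $A_{(0)}$ is a purely transcendental extension of $\bk$. A preliminary observation: because $g$ is homogeneous of degree $L$ for the grading $\deg X_i = c$, each of its monomials has ordinary degree $L/c$ in the $X_i$, so $g$ is an ordinary form of degree $p := L/c$; likewise $h$ is an ordinary form of degree $q := L/a$.

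First I would introduce degree-zero coordinates $u_i = X_i/X_1$ for $2 \le i \le k$, $v_j = Y_j/Y_1$ for $2 \le j \le \ell$, and the connecting element $t = X_1^{\,p}/Y_1^{\,q}$, which has degree $pc - qa = L - L = 0$. I claim these generate the full degree-zero subfield of $\Frac(R)$. Indeed, any degree-zero Laurent monomial $\prod X_i^{m_i}\prod Y_j^{n_j}$ rewrites, via the $u_i$ and $v_j$, as $X_1^{M} Y_1^{N}$ times a monomial in the $u_i,v_j$, where $M=\sum m_i$, $N=\sum n_j$ and $cM+aN=0$; the key arithmetic point is $\gcd(p,q)=\gcd(L/c,L/a)=1$, which forces $p\mid M$ and $q\mid N$, whence $X_1^{M}Y_1^{N}=t^{\,M/p}\in\bk(t)$. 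Passing to images in $A$ (here $X_1,Y_1\neq 0$ in $A$ since neither is a scalar multiple of $f$), the same reduction shows $A_{(0)}=\bk(\bar u_2,\dots,\bar u_k,\bar v_2,\dots,\bar v_\ell,\bar t)$, because every homogeneous ratio $P/Q$ of equal-degree elements decomposes into degree-zero monomial ratios after spanning each graded piece of $A$ by monomials.

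The last step uses the defining equation to eliminate $\bar t$. Dehomogenizing $f=0$ with respect to $X_1$ and $Y_1$ gives $X_1^{\,p}\,\tilde g(u)+Y_1^{\,q}\,\tilde h(v)=0$, where $\tilde g(u)=g(1,u_2,\dots,u_k)$ and $\tilde h(v)=h(1,v_2,\dots,v_\ell)$; dividing by $Y_1^{\,q}$ yields $\bar t\,\tilde g(\bar u)=-\tilde h(\bar v)$. Since $g$ involves only the $X_i$ while $f$ also involves the $Y_j$, and $\deg g=\deg f=L$, the element $g(X)$ is not a scalar multiple of $f$, hence is nonzero in the domain $A$; therefore $\tilde g(\bar u)\neq 0$ and $\bar t = -\tilde h(\bar v)/\tilde g(\bar u)\in\bk(\bar u,\bar v)$. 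Consequently $A_{(0)}=\bk(\bar u_2,\dots,\bar u_k,\bar v_2,\dots,\bar v_\ell)$ is generated over $\bk$ by exactly $k+\ell-2$ elements, which equals $\trdeg_{\bk}A_{(0)}=\dim\Proj A$. A field generated over $\bk$ by as many elements as its transcendence degree has those generators as a transcendence basis, so it is purely transcendental; hence $\Proj(R/\lb f\rb)$ is rational over $\bk$.

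The hard part will be the generation claim in the second paragraph, namely the lattice computation verifying that $u_i$, $v_j$, and $t$ together generate the entire degree-zero subfield (rather than merely the obvious ratios within each weight class), which is exactly where $\gcd(L/c,L/a)=1$ is used. Granting that, the crux is the clean observation that the single relation coming from $f$ solves rationally for the connecting coordinate $\bar t$ in terms of the $\bar u_i,\bar v_j$, collapsing the field to a purely transcendental one of the correct transcendence degree. Note that no hypothesis on $\bk$ (e.g. algebraic closedness) is needed, since the argument is purely field-theoretic.
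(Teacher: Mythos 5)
Your proof is correct and takes essentially the same route as the paper's: ratio coordinates plus the single connecting degree-zero monomial $X_1^p/Y_1^q$, generation of the function field via the coprimality $\gcd(L/c,L/a)=1$, elimination of the connecting element using the relation $f=0$, and a transcendence-degree count. The only (harmless) differences are that the paper first reduces to $\gcd(a,c)=1$ by passing to a Veronese subring where you work with $p=L/c$, $q=L/a$ directly, and that you justify the nonvanishing of the denominator $\tilde g(\bar u)$ by a divisibility argument ($f \nmid g$) where the paper invokes the algebraic independence of its ratio coordinates.
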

	
	Using Theorem A together with Proposition \ref{rationalAlldim}, we eventually obtain the following (Theorem \ref{hypersurfaceDegree}) which generalizes \eqref{easyEquiv}:
	\begin{thmnonumber}
		Let $w_0,w_1,w_2,w_3 \in \Nat^+$ where $w_0 \leq w_1 \leq w_2 \leq w_3$ and let $L = \lcm(w_0,w_1,w_2,w_3)$. Suppose $X = X_f \subset \PPP(w_0,w_1,w_2,w_3)$ is a well-formed quasismooth hypersurface of degree $nL$ for some $n \in \Nat^+$. Then the following are equivalent:
		\begin{enumerate}[\rm(a)]
			\item $X$ is rational,
			\item $h^2(X, \OSheaf_X) = 0$,
			\item one of the following holds
			\begin{enumerate}[\rm(i)]
				\item $n = 1$, $w_0 = w_1$, $w_2 = w_3$ and $\gcd(w_0,w_2) = 1$;
				\item $nL - \sum_{i=0}^3 w_i < 0$.
				
			\end{enumerate} 
		\end{enumerate}
	\end{thmnonumber}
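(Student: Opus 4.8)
The plan is to route everything through the single integer
\[
a := nL - \sum_{i=0}^{3} w_i,
\]
which is the degree of the dualizing sheaf. For a quasismooth well-formed hypersurface adjunction gives $\omega_X \cong \OSheaf_X(a)$, and since $X$ is a hypersurface it is Cohen--Macaulay, so Serre duality yields $h^2(X,\OSheaf_X) = h^0(X,\omega_X) = h^0(X,\OSheaf_X(a))$. Writing $S = \Comp[x_0,x_1,x_2,x_3]$ with the weighted grading and using the sequence $0 \to \OSheaf_{\PPP}(-\sum w_i) \to \OSheaf_{\PPP}(a) \to \OSheaf_X(a) \to 0$ on $\PPP = \PPP(w_0,w_1,w_2,w_3)$ together with the vanishing of intermediate cohomology of line bundles on a weighted $\PPP^3$, I would identify $H^0(X,\OSheaf_X(a)) \cong S_a$. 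Since $\dim_\Comp S_a$ counts monomials of weighted degree $a$, it is nonzero exactly when $a$ is representable. This gives the clean first reduction
\[
\text{(b)} \iff a \notin \Gamma,
\]
with the convention that $a<0$ means $a\notin\Gamma$.

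Next I would prove (b)$\iff$(c) arithmetically by feeding $a\notin\Gamma$ into Theorem~A. If $a<0$ we are exactly in case (c)(ii). If $a\ge 0$ and $a\notin\Gamma$, then $a\in\Nat\setminus\Gamma$, so Theorem~A(ii) applied to the well-formed tuple $(w_0,w_1,w_2,w_3)$ forces $n=1$ and $|\{w_0,w_1,w_2,w_3\}|=2$. Writing the two values as $p<q$ and imposing well-formedness eliminates every configuration but one: the pattern $(p,q,q,q)$ has a non-coprime triple and is excluded, while $(p,p,p,q)$ forces $p=1$, whence $1\in\Gamma$ gives $\Gamma=\Nat$ and contradicts $a\notin\Gamma$. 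Only $(p,p,q,q)$ with $\gcd(p,q)=1$ survives, which is precisely (c)(i). For the converse, (c)(ii) gives $a<0\notin\Gamma$; and in case (c)(i) one has $L=pq$, $\Gamma=\langle p,q\rangle$, $a=pq-2p-2q$, so by the symmetry of $\langle p,q\rangle$ about its Frobenius number $F=pq-p-q$ one gets $a\in\Gamma \iff F-a = p+q\notin\Gamma$; since $p+q\in\Gamma$ trivially, $a\notin\Gamma$. This closes (b)$\iff$(c).

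It remains to prove (a)$\iff$(b). The direction (a)$\Rightarrow$(b) is soft: $X$ has only quotient (hence rational) singularities, so for the minimal resolution $\pi\colon \tilde X \to X$ one has $h^2(X,\OSheaf_X)=h^2(\tilde X,\OSheaf_{\tilde X})=p_g(\tilde X)$, and $p_g$ is a birational invariant vanishing on any rational surface. For (b)$\Rightarrow$(a) I would split along (c). In case (c)(ii) the inequality $a<0$ makes $-K_X$ ample, so $X$ is a log del Pezzo surface; the weighted-hypersurface vanishing $h^1(X,\OSheaf_X)=0$ gives $q(\tilde X)=0$, while anti-ampleness of $mK_X$ gives $P_m(\tilde X)=h^0(X,\OSheaf_X(ma))=0$ for all $m\ge 1$, and Castelnuovo's criterion yields that $\tilde X$, hence $X$, is rational. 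In case (c)(i), a short degree count in weights $(p,p,q,q)$ with $\gcd(p,q)=1$ shows that every form of degree $L=pq$ splits as $f=g(x_0,x_1)+h(x_2,x_3)$, and quasismoothness forces $g,h\ne 0$; Proposition~\ref{rationalAlldim} with $k=\ell=2$, $c=p$, $a=q$ then gives rationality directly.

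The main obstacle I anticipate is case (c)(i) when $a>0$: here $K_X=\OSheaf_X(a)$ is ample, so $X$ numerically resembles a surface of general type and no vanishing-plus-Castelnuovo argument can apply, the work being done instead by the non-klt singularities of $X$. This is exactly where Proposition~\ref{rationalAlldim} is indispensable, and it is the source of the promised rational hypersurfaces with ample canonical class. The remaining delicate points are bookkeeping: confirming $\omega_X\cong\OSheaf_X(a)$ and the cohomology vanishing on weighted $\PPP^3$ used in the reduction for (b), and checking that well-formedness of $X$ really delivers well-formedness of the tuple $(w_0,w_1,w_2,w_3)$ needed to invoke Theorem~A.
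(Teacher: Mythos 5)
Your skeleton is essentially the paper's: the cohomological reduction (b) $\iff$ $\alpha \notin \Gamma$ (which you re-derive via adjunction and Serre duality, where the paper cites Lemma \ref{FletcherCohomology} and Proposition \ref{notUniruled}), then Theorem A(ii) plus well-formedness of the weight tuple for (b)$\Rightarrow$(c), the del Pezzo argument for (c)(ii)$\Rightarrow$(a), and Proposition \ref{rationalAlldim} for (c)(i)$\Rightarrow$(a). Your direct proof of (c)(i)$\Rightarrow \alpha\notin\Gamma$ via the symmetry of the semigroup $\lb p,q\rb$ about its Frobenius number is not needed in the paper, which instead closes the cycle (c)$\Rightarrow$(a)$\Rightarrow$(b); both routes are valid.

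There is, however, one step that fails: your claim in case (c)(i) that ``quasismoothness forces $g,h\ne 0$.'' It does not. Take weights $(p,p,1,1)$ with $p\ge2$ and $f = X_0+X_1$, of degree $p = L$ (so $n=1$): here $\lb f\rb$ is prime, the affine cone is smooth (so $X_f$ is quasismooth), and $X_f$ is well-formed, so all hypotheses of the theorem hold and we are in case (c)(i) with $q=1$; yet $h=0$, so Proposition \ref{rationalAlldim} cannot be invoked --- its hypotheses genuinely require both $g$ and $h$ nonzero. Your claim is correct when $p,q\ge2$ (if, say, $h=0$, then $f=g(X_0,X_1)$ is a binary form of ordinary degree $q\ge2$, hence a product of linear forms, which destroys both primeness of $\lb f\rb$ and quasismoothness), but the boundary case $q=1$ (or symmetrically $p=1$) survives and must be handled separately. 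This is precisely what the paper's Corollary \ref{rationalDim2} does: if the image of $g$ or of $h$ in $B$ vanishes, irreducibility forces $f$ to be a linear form in two of the variables, and then $X \isom \PPP(c,1,1)$ is a weighted projective plane, hence rational. Adding this degenerate case repairs your argument; the rest of the proposal is sound.
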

	
	With Theorem \ref{hypersurfaceDegree} available to us, we are able to address Gurjar's Question \ref{qProj}. We will see that Question \ref{qProj} reduces to the special case where $\cotype(a_0, a_1, a_2, a_3) = 0$ (see Definition \ref{cotypeDef}) and hence is completely answered by the following, which is Theorem \ref{PBTHM}. 
	\begin{thmnonumber}
		Suppose $a_0 \leq a_1 \leq a_2 \leq a_3$ and  $\cotype(a_0,a_1,a_2,a_3) = 0$.  Then $\Proj B_{a_0,a_1,a_2,a_3}$ is rational if and only if one of the following holds:
		\begin{enumerate}[\rm(a)]
			\item $a_0 = a_1$, $a_2 = a_3$ and $\gcd(a_0,a_2) = 1$;
			\item $\frac{1}{a_0} + \frac{1}{a_1} + \frac{1}{a_2} + \frac{1}{a_3} > 1$.
		\end{enumerate}
	\end{thmnonumber}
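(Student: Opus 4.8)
The plan is to recognize $X := \Proj B_{a_0,a_1,a_2,a_3}$ as a quasismooth weighted hypersurface and to feed it directly into Theorem~\ref{hypersurfaceDegree}. Concretely, with $L := \lcm(a_0,a_1,a_2,a_3)$ and $w_i := L/a_i$, the defining polynomial $f = X_0^{a_0}+X_1^{a_1}+X_2^{a_2}+X_3^{a_3}$ is homogeneous of degree $L$ for the grading $\deg X_i = w_i$, so $X = X_f \subset \PPP(w_0,w_1,w_2,w_3)$ has degree $L$. Quasismoothness is automatic, since the partials $\partial f/\partial X_i = a_i X_i^{a_i-1}$ have no common zero off the origin. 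The hypothesis $\cotype(a_0,a_1,a_2,a_3)=0$ (Definition~\ref{cotypeDef}) is what ensures that $X$ is well-formed, so Theorem~\ref{hypersurfaceDegree} applies. Because $a_0 \le a_1 \le a_2 \le a_3$ forces $w_0 \ge w_1 \ge w_2 \ge w_3$, I would first relabel the weights in increasing order as $(w_3,w_2,w_1,w_0)$ to match that theorem. Writing $M := \lcm(w_0,w_1,w_2,w_3)$, which is the quantity playing the role of ``$L$'' in Theorem~\ref{hypersurfaceDegree}, each $w_i$ divides $L$, so $M \mid L$ and $n := L/M$ is a positive integer with $\deg X = L = nM$. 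Theorem~\ref{hypersurfaceDegree} then says $X$ is rational iff condition (c)(i) or (c)(ii) holds, and it remains to translate these into (a) and (b).

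For (c)(ii) I would simply compute, using $nM = L$,
\[
nM - \sum_{i=0}^3 w_i \;=\; L - \sum_{i=0}^3 \frac{L}{a_i} \;=\; L\Bigl(1 - \sum_{i=0}^3 \frac{1}{a_i}\Bigr).
\]
As $L>0$, the inequality $nM - \sum_i w_i < 0$ is equivalent to $\sum_{i=0}^3 1/a_i > 1$, which is exactly condition (b).

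For (c)(i), the relabeling turns ``two smallest weights equal, two largest weights equal'' into $w_3 = w_2$ and $w_1 = w_0$, i.e.\ $a_2 = a_3$ and $a_0 = a_1$. Setting $p := a_0 = a_1$, $q := a_2 = a_3$ (so $p \le q$) and $g := \gcd(p,q)$, one gets $L = pq/g$ together with $w_0 = w_1 = q/g$ and $w_2 = w_3 = p/g$. The coprimality demanded by (c)(i), namely $\gcd(w_1,w_3)=\gcd(q/g,p/g)=1$, is then automatic, while $M = \lcm(q/g,p/g)=pq/g^2$ yields $n = L/M = g$. Hence the remaining requirement $n=1$ in (c)(i) is equivalent to $g=\gcd(a_0,a_2)=1$, so (c)(i) holds iff $a_0=a_1$, $a_2=a_3$ and $\gcd(a_0,a_2)=1$, which is exactly condition (a). Combining the two translations, Theorem~\ref{hypersurfaceDegree} gives that $X$ is rational iff (a) or (b) holds.

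I expect the main obstacle to be bookkeeping rather than anything conceptual: one must keep the two invariants $L=\lcm(a_i)$ and $M=\lcm(w_i)$ strictly apart, since it is $n=L/M$ (not $L$) that enters (c)(i)–(c)(ii), and the entire argument hinges on the identity $n=\gcd(a_0,a_2)$ in the equal-pairs case. A reassuring sanity check is the Fermat quartic $a=(4,4,4,4)$: there $M=1$, $n=4\neq 1$ and $\sum_i 1/a_i = 1$, so both (a) and (b) fail and $X$ is the non-rational K3 surface, consistent with both theorems. The only genuinely external input is that $\cotype=0$ secures well-formedness of $X$, which is precisely the hypothesis isolated by the reduction preceding the statement, so no further case analysis is required here.
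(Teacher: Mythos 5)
Your proposal is correct and follows essentially the same route as the paper: invoke the cotype-zero criterion (Proposition \ref{PBWellFormed}) to place $X$ under the hypotheses of Theorem \ref{hypersurfaceDegree}, then translate conditions (c)(i) and (c)(ii) into (a) and (b). The only difference is one of detail—the paper dismisses the equivalence of (a) with (c)(i) as ``easy to see,'' whereas you work it out explicitly via the identity $n=\gcd(a_0,a_2)$ in the equal-pairs case, which is a correct and worthwhile elaboration.
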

	
	Another interesting application of our results is Example \ref{exampleAmple}, which gives a new family of rational quasismooth weighted hypersurfaces with ample canonical divisor. We believe these are the first such examples since Koll$\rm{\acute a}$r introduced in \cite{Kollar2006} what are now known as \textit{Koll$\acute a$r hypersurfaces}. Our examples are simple and exist in any dimension. As a very special case, if $a,c \in \Nat^+$ are such that $\gcd(a,c) = 1$ and $ac - 2a-2c > 0$ then $\Proj B_{a,a,c,c}$ is a quasismooth normal rational projective surface with quotient singularities and ample canonical divisor.
	
	\section*{Notation}
	
	We use the following terminology and notation throughout the article. 
	
	\begin{itemize}
		\item The set of natural numbers includes $0$ and is denoted by $\Nat$. The set of positive integers is denoted by $\Nat^+$.
		\item Let $\Phi$ denote the set of prime numbers and let $p \in \Phi$. Given $n \in \Nat^+$, define $$v_p(n) = \max\setspec{u \in \Nat}{p^u \text{ divides } n}$$ 
		and observe that $n = \prod_{p \in \Phi} p^{v_p(n)}$ is the prime factorization of $n$. Given $f,g \in \Nat^+$, we define $v_p(\frac{f}{g}) = v_p(f) - v_p(g)$.
		
		\item If $d_1, \dots, d_n \in \Nat^+$, we define $\langle d_1, \dots, d_n \rangle$ to be the submonoid of $(\Nat,+)$ generated by 
		$d_1, \dots, d_n$.
		
		\item A {\it numerical semigroup\/} $\Gamma$ is a subset of $(\Nat,+)$ satisfying the following three conditions:
		\begin{enumerate}[\rm(i)]
			\item $0 \in \Gamma$
			\item $\Nat^+ \setminus \Gamma$ is finite
			\item $x,y \in \Gamma \Rightarrow x + y \in \Gamma$
		\end{enumerate}
		The \textit{Frobenius number} of $\Gamma$ is defined as $F(\Gamma) = \max( \Integ \setminus \Gamma )$. If $\gcd( d_1, \dots, d_n ) = 1$, we may write $F( d_1, \dots, d_n )$ instead of $F(\lb d_1, \dots, d_n \rb )$.

		\item Let $\bk$ be a field. If $K / \bk$ is a field extension, the notation $K = \bk^{(n)}$ means that $K / \bk$ is purely transcendental of transcendence degree $n$. 
		
		\item A \textit{variety} is an integral separated scheme of finite type over an algebraically closed field $\bk$. A \textit{surface} is a two-dimensional variety. 
		
		\item Let $X$ be a normal variety over $\bk$. Then $\Div(X)$ is its group of Weil divisors and $K_X$ denotes a canonical divisor of $X$. The group of $\Rat$-divisors is denoted by $\Div(X, \Rat)$. A divisor $D \in \Div(X)$ is ample if there exists some $n \in \Nat^+$ such that $\OSheaf_X(nD)$ is a very ample invertible sheaf (over $\bk$).  
		
		\item A \textit{del Pezzo surface} is a normal projective surface with at most quotient singularities such that $-K_X$ is ample. It is well known that a del Pezzo surface over $\Comp$ is rational.  
		
	\end{itemize}
	
	\section{A Semigroup Theorem}\label{Sec:SemigroupStatement}
	
	We state a theorem used in Section \ref{Sec:Rationality}. The proof of Theorem A is given in Section \ref{Sec:Numerical}. 
	
	\begin{definition}\label{wellformedtuple}
		A tuple $(d_1, \dots, d_n) \in (\Nat^+)^{n}$ is \textit{well-formed} if $\gcd(d_1, \dots, d_{i-1}, \hat{d_i}, d_{i+1}, \dots d_n) = 1$ for all $i \in \{1, \dots, n\}$.
	\end{definition}
	
	\begin{theoremA}\label{TheoremA}
		Suppose $(d_1,d_2,d_3,d_4) \in (\Nat^+)^4$ is well-formed, let $\Gamma = \lb d_1, d_2, d_3, d_4 \rb$ and let $L = \lcm(d_1, d_2, d_3, d_4)$. 
		\begin{enumerate}[\rm(i)]
			\item If $N \geq \max\{ 2L-\sum_{m=1}^4 d_m, 0 \}$ then $N \in \Gamma$. 
			\item  If $n \in \Nat^+$ and $n \cdot L - \sum_{i=1}^4 d_i \in \Nat \setminus \Gamma$, then $n=1$ and $| \{ d_1, d_2 , d_3 , d_4 \} | = 2$.
			
		\end{enumerate}
		
	\end{theoremA} 
	
	\begin{remark} \label{easyApplication}
		With the assumptions of Theorem A(ii), if we assume in addition that $d_1 \leq d_2 \leq d_3 \leq d_4$, then $d_1 = d_2, d_3 = d_4$ and $\gcd(d_1,d_3) = 1$. 
	\end{remark}
	
	\begin{remark}\label{semigroupComments}
		Not only is the well-formedness assumption in Theorem A natural given the geometric context of weighted projective varieties discussed in the following section, it is also the case that various general questions about numerical semigroups can be reduced to the special case where $(d_1, \dots, d_n)$ is well-formed. (See for instance Section 3 of \cite{johnson1960linear}, Proposition 8 in \cite{froberg1986numerical}, as well as Lemma 2.16 and Proposition 2.17 in \cite{rosales2009numerical}.)
	\end{remark}

	\section{Rationality of Some Weighted Hypersurfaces}\label{Sec:Rationality}

	\begin{nothing}\label{weightedComments}
		Let $S = \Comp_{w_0, \dots, w_n}[X_0, \dots, X_n]$ denote the graded polynomial ring where $n \geq 1$ and $\deg(X_i) = w_i$ for each $i = 0, \dots, n$. The weighted projective space $\PPP = \PPP(w_0, \dots , w_n) = \Proj(S)$ is \textit{well-formed} if $(w_0, \dots , w_n)$ is well-formed. Every weighted projective space is a projective variety and is isomorphic to a well-formed weighted projective space. Note also that every weighted projective space is rational.  
		
		\begin{nothing}\label{weightedCompleteIntersection}
			Let $I$ be a homogeneous prime ideal of the graded ring $S = \Comp_{w_0,\dots,w_n}[X_0, \dots, X_n]$ and define $X_I = \Proj (S/I) \subseteq \PPP(w_0, \dots, w_n)$. If $f_1, \dots, f_k$ are homogeneous elements of $S$, we abbreviate $X_{\lb f_1, \dots, f_k \rb}$ by $X_{f_1, \dots, f_k}$. If $I$ is generated by a regular sequence $(f_1, \dots , f_k)$ of homogeneous elements of $S$ of respective degrees  $d_i = \deg(f_i)$ where $i = 1, \dots , k$, then $X_I$ is called a \textit{weighted complete intersection} of \textit{multidegree $(d_1, \dots, d_k)$}. In particular, if $k=1$, $f_1 = f$ and $d_1 = d$, we will say that $X_I = X_f$ is a \textit{weighted hypersurface} of \textit{degree $d$}. The closed subset $C_{X_I} = V(I) \subseteq \aff^{n+1}$ is called the  \textit{affine cone over $X_I$};	note that $C_{X_I}$ passes through the origin of $\aff^{n+1}$ and that $C_{X_I} \isom \Spec( S/ I )$ is an integral affine scheme. The variety $X_I$ is \textit{quasismooth} if $C_{X_I}$ is nonsingular away from the origin. A weighted complete intersection $X_I \subseteq \PPP(w_0, \dots, w_n)$ is \textit{well-formed} if both
			\begin{enumerate}[\rm(i)]
				\item $\PPP(w_0, \dots, w_n)$ is well-formed,
				\item $\codim_{X_I}(X_I \cap \Sing(\PPP)) \geq 2$.
			\end{enumerate}
		\end{nothing}
		
		The \textit{amplitude} of a well-formed weighted complete intersection $X_{f_1, \dots, f_k} \subseteq \PPP(w_0, \dots, w_n)$ is the quantity $\alpha = \sum_{i = 1}^k d_i - \sum_{j = 0}^n w_j$ where $d_i = \deg(f_i)$. (We will preserve this notation for the amplitude $\alpha$ through to the end of Section \ref{Sec:Rationality}.) It is well known that if $X_I$ is a well-formed quasismooth weighted complete intersection, then $X_I$ is a normal, Cohen-Macaulay, $\Rat$-factorial variety with at most cyclic quotient (hence rational) singularities.  
	\end{nothing}
	
	We make use of \ref{ramification} to \ref{wellformedIFFSatCodim1} in the proof of Corollary \ref{ampleRational}. Paragraph   \ref{ramification} appears in Section 5.4 of \cite{ChitayatDaigleCylindricity},  Lemma \ref{BetterDemazure} (a) is due to Demazure. Lemma \ref{BetterDemazure} (b) and Proposition \ref{wellformedIFFSatCodim1} are unpublished results of Daigle.
	
	\begin{nothing}\label{ramification}
		If $B = \bigoplus_{i\in\Nat} B_i$ is an $\Nat$-graded domain, the number $e(B) = \gcd\setspec{ i \in \Nat }{ B_i \neq 0 }$ is called the
		{\it saturation index\/} of $B$. One says that $B$ is {\it saturated in codimension $1$} if $e(B/\pgoth)=e(B)$ for every height $1$ homogeneous prime ideal $\pgoth$ of $B$. Let $X = \Spec B$ and $Y = \Proj B$. Let $Y^{(1)}$ be the set of height $1$ homogeneous prime ideals of $B$.
		If $\pgoth \in Y^{(1)}$ then $B_{(\pgoth)} \subset B_\pgoth$ is an extension of discrete valuation rings whose ramification index we denote by $e_\pgoth$.
		For each $\pgoth \in Y^{(1)}$, let $C_\pgoth^Y$ (resp.\  $C_\pgoth^X$) be the closure of $\{ \pgoth \}$ in $Y$ (resp.\ in $X$);
		note that $C_\pgoth^Y$ (resp.\  $C_\pgoth^X$) is a prime divisor of $Y$ (resp.\ of $X$),
		and that each prime divisor on $Y$ is a $C_\pgoth^Y$ for some $\pgoth \in Y^{(1)}$. For each prime divisor $C_\pgoth^Y \in Y^{(1)}$, define $\phi(C_\pgoth^Y) = e_\pgoth C_\pgoth^{X}$, and extend linearly to a $\Rat$-linear map $\phi : \Div(Y, \Rat) \to \Div(X,\Rat)$, where $D \mapsto \phi(D)$.
	\end{nothing}
	
	\begin{lemma} \label{BetterDemazure}
		Let $B$ be an $\Nat$-graded normal domain that is finitely generated over $\Comp$ such that $e(B)=1$ and $\height(B_+)>1$. 
		\begin{enumerate}[\rm(a)]
			
			\item There exists an ample $\Rat$-divisor $D$ of $Y = \Proj B$ such that $\OSheaf_Y(n) = \OSheaf_Y(nD)$ for all $n \in \Integ$.
			
			\item If $B$ is saturated in codimension $1$, then $D \in \Div(Y)$ (i.e. $D$ has integral coefficients).
			
		\end{enumerate}
	\end{lemma}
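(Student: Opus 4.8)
The plan is to obtain part (a) as an instance of Demazure's construction of the ample $\Rat$-divisor attached to a normal graded ring, and then to deduce part (b) from an explicit formula for the coefficients of that divisor in terms of the ramification data of \ref{ramification}. Throughout I write $X = \Spec B$ and $Y = \Proj B$.

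For part (a), the starting point is that $e(B) = 1$ guarantees a nonzero homogeneous element $t$ of degree $1$ in the graded fraction field $\HFrac(B)$; setting $K = \HFrac(B)_0 = \Comp(Y)$ one has $\HFrac(B) = K[t,t^{-1}]$. The hypothesis $\height(B_+) > 1$ ensures that no height $1$ homogeneous prime contains $B_+$, so every such $\pgoth$ lies in $Y^{(1)}$ and cuts out a prime divisor $V = C_\pgoth^Y$ on $Y$. I then define
$$
D = \sum_{\pgoth \in Y^{(1)}} \frac{v_\pgoth(t)}{e_\pgoth}\, C_\pgoth^Y ,
$$
which is a genuine $\Rat$-divisor because $v_\pgoth(t) = 0$ off the finitely many $\pgoth$ in the support of $\div(t)$. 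To identify the sheaves, use normality: a homogeneous $b = f t^{n}$ with $f \in K$ lies in $B_n$ iff $v_\pgoth(b) \geq 0$ for all $\pgoth \in Y^{(1)}$. The ramification relation $v_\pgoth(f) = e_\pgoth\cdot \ord_V(f)$ for $f \in K$ turns this into $\ord_V(f) + n\,\frac{v_\pgoth(t)}{e_\pgoth} \geq 0$ for every $V$, i.e. $\div_Y(f) + nD \geq 0$. Hence $t^{-n}B_n = H^0(Y, \OSheaf_Y(\floor{nD}))$, and checking this identity on the standard affine opens $D_+(g)$ upgrades it to the sheaf equality $\OSheaf_Y(n) = \OSheaf_Y(nD)$. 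Ampleness then follows from finite generation: clearing the denominators of the $a_V$ by some $q$ makes $qD$ integral, and applying Serre's theorem to the finitely generated ring $B^{(q)}$ yields $m$ with $\OSheaf_Y(mq) = \OSheaf_Y(mqD)$ very ample; since $mqD$ is thus Cartier and ample, $D$ is an ample $\Rat$-divisor.

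For part (b), fix $\pgoth \in Y^{(1)}$, write $V = C_\pgoth^Y$ and $a_V = v_\pgoth(t)/e_\pgoth$. The crux is the minimal valuation $\mu(n) = \min\{ v_\pgoth(b) : 0 \neq b \in B_n \}$. Writing $b = f t^n$ gives $v_\pgoth(b) = e_\pgoth \ord_V(f) + n\,v_\pgoth(t)$, so minimizing over $B_n$ amounts to minimizing $\ord_V(f)$ over the global sections $f$ of $\OSheaf_Y(\floor{nD})$. Because $D$ is ample, for $n \gg 0$ this sheaf is globally generated, in particular at the generic point of $V$, so the least attainable order is exactly $-\floor{n a_V}$; substituting $n\,v_\pgoth(t) = n e_\pgoth a_V$ yields
$$
\mu(n) = e_\pgoth\bigl(n a_V - \floor{n a_V}\bigr) \qquad (n \gg 0).
$$
Since $b \in B_n$ avoids $\pgoth$ iff $v_\pgoth(b) = 0$, we get $(B/\pgoth)_n \neq 0 \iff \mu(n) = 0 \iff n a_V \in \Integ$. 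Writing $a_V = p/q$ in lowest terms, for large $n$ this holds precisely when $q \mid n$, so the submonoid $\{ n : (B/\pgoth)_n \neq 0\}$ of $\Nat$ has $\gcd$ equal to $q$, i.e. $e(B/\pgoth) = q$. Saturation in codimension $1$ now forces $e(B/\pgoth) = e(B) = 1$, whence $q = 1$ and $a_V \in \Integ$. As $\pgoth$ was arbitrary, $D \in \Div(Y)$.

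The main obstacle is the valuation computation in part (b): proving that the minimal order $-\floor{n a_V}$ is actually achieved by a global section, rather than merely bounded by it. This is precisely where ampleness of $D$ and the resulting global generation at the generic point of $V$ for $n \gg 0$ are essential; with only an inequality $\mu(n) \geq e_\pgoth(n a_V - \floor{n a_V})$ one cannot pin down $e(B/\pgoth)$ and the argument collapses. By comparison, the remaining ingredients — the ramification identity $v_\pgoth(f) = e_\pgoth\,\ord_V(f)$ and the passage from graded pieces to sheaves — are routine bookkeeping once the graded fraction field and the element $t$ are in place.
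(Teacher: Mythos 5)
Your proof is correct, and in part (b) it takes a genuinely different route from the paper's. For (a) the paper does not reconstruct Demazure's divisor at all: it picks homogeneous $f,g\in B$ with $\deg(f)-\deg(g)=1$, sets $W=\div_X(f/g)\in\Div(X)$, and cites the theorem below Section 3.5 of \cite{Demazure} for the existence of a unique ample $D\in\Div(Y,\Rat)$ with $\phi(D)=W$ and $\OSheaf_X(nD)\isom\OSheaf_X(n)$. Your explicit divisor $D=\sum_{\pgoth\in Y^{(1)}}\frac{v_\pgoth(t)}{e_\pgoth}\,C_\pgoth^Y$ (with $t=f/g$) satisfies $\phi(D)=\div_X(t)=W$, so it is the same divisor; your (a) amounts to re-proving the cited theorem rather than invoking it. The real divergence is in (b). The paper's argument is two lines: Corollary 9.4 of \cite{daigle2023rigidity} says that saturation in codimension $1$ forces $e_\pgoth=1$ for every $\pgoth\in Y^{(1)}$, and then integrality of $W$ immediately gives integrality of the coefficients $v_\pgoth(t)/e_\pgoth$ of $D$. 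You never determine $e_\pgoth$; instead you identify the denominator of the coefficient $a_V=v_\pgoth(t)/e_\pgoth$ with the saturation index $e(B/\pgoth)$ itself, via the asymptotic formula $\mu(n)=e_\pgoth\bigl(na_V-\floor{na_V}\bigr)$ for $n\gg0$, and let the saturation hypothesis kill the denominator. This proves a weaker intermediate statement than the paper's (you do not obtain $e_\pgoth=1$, only $e_\pgoth\mid v_\pgoth(t)$), but it is exactly what is needed, and it buys self-containedness: the external rigidity result is replaced by Serre-type positivity, and as a by-product you get the formula $e(B/\pgoth)=\text{denominator of }a_V$ with no saturation hypothesis, which is of independent interest.

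One step in your (b) deserves to be written out, since it is the crux you yourself flag: the claim that $\OSheaf_Y(\floor{nD})$ is globally generated for all $n\gg0$ is not a direct instance of Serre's theorem, because these sheaves are not the powers of a single invertible sheaf. The standard fix: choose $N$ with $ND$ integral, Cartier and very ample; since $kND$ is integral one has $\floor{(r+kN)D}=\floor{rD}+kND$, hence $\OSheaf_Y(\floor{(r+kN)D})\isom\OSheaf_Y(\floor{rD})\otimes\OSheaf_Y(ND)^{\otimes k}$ (tensoring a divisorial sheaf with a Cartier divisor's sheaf behaves as expected); now apply Serre's global generation theorem to each of the finitely many coherent sheaves $\OSheaf_Y(\floor{rD})$, $0\le r<N$, and take the maximum of the resulting thresholds. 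With that detail supplied, your argument is complete.
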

	
	\begin{proof}
		We use the notation of \ref{ramification}. Let $f,g$ be nonzero homogeneous elements of $B$ such that $\deg(f)-\deg(g)=1$ and let $W = \div_X(f/g) \in \Div(X)$. By the Theorem below Section 3.5 in \cite{Demazure},
		there exists a unique $D \in \Div(Y,\Rat)$ such that $\phi(D) = W$ and this $D$ is ample and satisfies $\OSheaf_X(n D) \isom \OSheaf_X(n)$ for all $n \in \Integ$. This shows (a).
		
		If $B$ is saturated in codimension $1$, then Corollary 9.4 of \cite{daigle2023rigidity} implies that $e_\pgoth=1$ for all $\pgoth \in Y^{(1)}$;
		since $W$ has integral coefficients, it follows that $D$ has integral coefficients, proving (b). 
	\end{proof}
	
	\begin{proposition}\label{wellformedIFFSatCodim1}
		Let $n\ge2$, let $\proj = \proj(w_0,\dots,w_n)$ be a well-formed weighted projective space
		and let $I$ be a homogeneous prime ideal of $R = \Comp_{w_0,\dots,w_n}[X_0,\dots,X_n]$ with $\height I < n$.
		Consider the graded ring $B = R/I$ and the closed subvariety $X = V_+(I)$ of $\proj$.
		Then $X$ is well-formed if and only if $B$ is saturated in codimension $1$.
	\end{proposition}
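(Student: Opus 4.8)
The plan is to test both properties at the generic point of each prime divisor of $X$. Indeed, ``saturated in codimension $1$'' is by definition a statement about the height $1$ homogeneous primes $\pgoth$ of $B$, while ``$\codim_X(X\cap\Sing\proj)\ge 2$'' fails exactly when some prime divisor of $X$ lies inside $\Sing\proj$. Using the correspondence from \ref{ramification} between the height $1$ homogeneous primes $\pgoth$ of $B$ and the prime divisors $C_\pgoth^Y$ of $Y=\Proj B = X$, I would translate each side into an arithmetic condition on the weights and then compare them.

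The two inputs I would establish first are the following. (1) Because $\proj$ is well-formed, a point $Q\in\proj$ is singular if and only if $\gcd\{w_i : X_i(Q)\neq 0\}>1$; equivalently $\Sing\proj=\bigcup_p V_+(X_i : p\nmid w_i)$, the union being over primes $p$, and this locus has codimension $\ge 2$ in $\proj$. (2) For every homogeneous prime $\qgoth$ of $R$ containing $I$ one has $e(R/\qgoth)=\gcd\{w_i : X_i\notin\qgoth\}$: the ring $R/\qgoth$ is generated over $\Comp$ by the nonzero images of the $X_i$, so every homogeneous degree occurring in it is a nonnegative integral combination of the weights $w_i$ with $X_i\notin\qgoth$ and hence a multiple of their gcd, while each such $w_i$ itself occurs as a degree. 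Taking $\qgoth=I$ gives $e(B)=\gcd\{w_i:X_i\notin I\}$, and taking $\qgoth$ to be the homogeneous prime of $R$ attached to a height $1$ homogeneous prime $\pgoth$ of $B$ gives $e(B/\pgoth)=\gcd\{w_i:X_i\notin\pgoth\}$.

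With (1) and (2) the comparison is immediate. At the generic point of $C_\pgoth^Y$ the non-vanishing coordinates are exactly those $X_i\notin\pgoth$, so by (1) the divisor $C_\pgoth^Y$ is contained in $\Sing\proj$ if and only if $\gcd\{w_i:X_i\notin\pgoth\}>1$, that is, if and only if $e(B/\pgoth)>1$. Hence $X$ is well-formed if and only if $e(B/\pgoth)=1$ for all height $1$ homogeneous primes $\pgoth$, whereas $B$ is saturated in codimension $1$ if and only if $e(B/\pgoth)=e(B)$ for all such $\pgoth$. To reconcile the two I would prove $e(B)=1$: by (2), $e(B)>1$ would force $X_i\in I$ for all $i$ with $p\nmid w_i$ for some prime $p$, i.e. $X\subseteq V_+(X_i:p\nmid w_i)\subseteq\Sing\proj$; since $\Sing\proj$ has codimension $\ge 2$ this would give $\dim X\le n-2$, contrary to the situation considered (for a hypersurface, $\height I=1$, it cannot occur at all). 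Granting $e(B)=1$, the two conditions coincide and the equivalence follows.

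The main work is the computation (2), which is elementary but is the hinge of the whole argument, together with the structural fact in (1) that the singular locus of a well-formed weighted projective space is cut out by the vanishing of the coordinates whose weights share a common prime factor. The one genuinely delicate point is the reduction to $e(B)=1$: it amounts to excluding the degenerate case in which $X$ sits entirely inside a singular stratum of $\proj$. In that case the generic stabilizer of $X$ is already nontrivial, well-formedness fails outright, and the correct comparison is no longer with $\Sing\proj$ but with this generic stabilizer, which is precisely the content of the ratio $e(B/\pgoth)/e(B)$ governing the ramification indices $e_\pgoth$ of \ref{ramification}.
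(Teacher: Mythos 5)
Your proposal follows essentially the same route as the paper: both arguments combine the description $\Sing\proj = \bigcup_p V_+(\qgoth_p)$ (where $\qgoth_p$ is generated by the $X_i$ with $p \nmid w_i$) with the computation $e(R/\hgoth) = \gcd\{\, w_i : X_i \notin \hgoth \,\}$ for homogeneous primes $\hgoth$ of $R$ --- your input (2) is exactly the paper's equivalence \eqref{uy8qu3wy5trdqi} --- and then test containment in $\Sing\proj$ at the height one homogeneous primes; the difference is only organizational (you translate both conditions at generic points, the paper runs the two contrapositives directly). The gap is in the step you yourself flagged: the claim $e(B)=1$. From $e(B)>1$ you correctly deduce $X \subseteq V_+(\qgoth_p) \subseteq \Sing\proj$ and hence $\dim X \le n-2$, but this is not ``contrary to the situation considered'': the hypothesis $\height I < n$ only guarantees $\dim X \ge 1$, so for $2 \le \height I \le n-1$ no contradiction arises. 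Your argument is complete only in the hypersurface case $\height I = 1$, where, as you note, $\qgoth_p \subseteq I$ is impossible because well-formedness of $\proj$ forces $\height \qgoth_p \ge 2$.

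Moreover, this gap cannot be filled, because the statement as given fails in the degenerate case. Take $\proj = \proj(1,1,2,2,2)$ and $I = (X_0,X_1)$, a homogeneous prime of $R = \Comp_{1,1,2,2,2}[X_0,\dots,X_4]$ of height $2 < n = 4$. Then $X = V_+(I) = \Sing\proj$, so $X$ is not well-formed; yet $B \isom \Comp_{2,2,2}[X_2,X_3,X_4]$ has $e(B)=2$ and $e(B/\pgoth) = 2 = e(B)$ for every height one homogeneous prime $\pgoth$ (all degrees occurring in $B/\pgoth$ are even, and $(B/\pgoth)_2 \neq 0$ since otherwise $\pgoth \supseteq B_+$), so $B$ is saturated in codimension $1$. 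What your method actually proves is the corrected statement: $X$ is well-formed if and only if $e(B)=1$ and $B$ is saturated in codimension $1$. You should also know that the paper's own proof shares this defect: in its converse direction it concludes from $p \mid e(B/\pgoth)$ that $B$ is not saturated in codimension $1$, which requires $p \nmid e(B)$, i.e. $\qgoth_p \not\subseteq I$ --- exactly what fails in the example above. The forward implication (well-formed $\Rightarrow$ saturated) is sound in both treatments, and since that is the only direction invoked in \ref{ampleDiscussion} (where moreover $\height I = 1$), the paper's applications are unaffected; but your instinct that the degenerate case is where the real issue lies was correct.
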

	
	\begin{proof}
		Let $\pi : R \to B$ be the canonical epimorphism. Let $w=\prod_{i=0}^n w_i$. For each prime factor $p$ of $w$, let $J_p = \setspec{ j }{ 0 \le j \le n \text{ and } p \nmid w_j }$ and $\qgoth_p = \sum_{j \in J_p} R X_j$.
		It is well known that $\Sing\proj = \bigcup_{ p \mid w } V_+(\qgoth_p)$.
		Also note that
		\begin{equation}  \label {uy8qu3wy5trdqi}
			\text{given any homogeneous prime ideal $\hgoth$ of $R$, \quad $p \mid e(R/\hgoth)$ $\Leftrightarrow$ $\qgoth_p \subseteq \hgoth$.}
		\end{equation}
		
		Suppose that $B$ is not saturated in codimension $1$.
		Then there exists a height $1$ homogeneous prime ideal $\pgoth$ of $B$ such that $e(B / \pgoth) \neq 1$.
		Let $\hgoth = \pi^{-1}(\pgoth)$; then $R/\hgoth \isom B/\pgoth$, so $e(R/\hgoth) = e(B/\pgoth) \neq 1$.
		Choose a prime factor $p$ of $e(R/\hgoth)$; then $p\mid w$ and (by \eqref{uy8qu3wy5trdqi}) $\qgoth_p \subseteq \hgoth$.
		We also have $I \subseteq \hgoth$, so $V_+(\hgoth) \subseteq V_+(I) \cap V_+(\qgoth_p) \subseteq X \cap \Sing\proj$.
		Since $\height\hgoth = 1 +\height I$, we find $\dim V_+(\hgoth) = \dim X - 1$, so $\codim_X( X \cap \Sing\proj ) \le 1$ and hence $X$ is not well-formed. 
		
		Conversely, suppose that $X$ is not well-formed.
		Then $\codim_X( X \cap \Sing\proj ) \le 1$, so there exists a homogeneous prime ideal $\hgoth$ of $R$
		such that $V_+(\hgoth) \subseteq X \cap \Sing \proj$ and $\dim V_+(\hgoth) = \dim X - 1$.
		Note that $I \subseteq \hgoth$ and that $\height\hgoth = \height I + 1$; so $\pgoth = \pi(\hgoth)$ is a homogeneous prime ideal of $B$ of height $1$.
		Since  $V_+(\hgoth) \subseteq \Sing \proj = \bigcup_{p\mid w}V_+(\qgoth_p)$, there exists a prime divisor $p$ of $w$ such that $V_+(\hgoth) \subseteq V_+(\qgoth_p)$
		and hence $\qgoth_p \subseteq \hgoth$.
		So \eqref{uy8qu3wy5trdqi} implies that $p$ divides $e(R/\hgoth) = e(B/\pgoth)$, which implies that $B$ is not saturated in codimension $1$.
	\end{proof}
	
	We generalize an unpublished result of Michela Artebani. See Proposition 4.3.4 in  \cite{ChitayatThesis}.

	\begin{proposition}\label{rationalAlldim}
		Let $a,c \in \Nat^+$ and consider the graded polynomial ring $R = \bk_{c,\dots, c, a, \dots, a}[X_1, \dots, X_k, Y_1, \dots, Y_\ell]$ where $\bk$ is a field, $k \geq 1$, $\ell \geq 1$, $\deg(X_i) = c$ and $\deg(Y_j) = a$ for all $i,j$. Suppose $f = g(X_1, \dots, X_k) + h(Y_1, \dots, Y_\ell)$ is irreducible and homogeneous of degree $L = \lcm(a,c)$ and both $g(X_1, \dots, X_k)$ and $h(Y_1, \dots, Y_\ell)$ are non-zero. Then $\Proj(R / \lb f \rb)$ is rational over $\bk$. 
	\end{proposition}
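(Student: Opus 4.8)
The plan is to compute the function field of $\Proj(R/\lb f\rb)$ explicitly and check that it is purely transcendental over $\bk$. Since $f$ is irreducible, $B := R/\lb f\rb$ is a graded domain, so its field of fractions is a graded field and the function field $K$ of $\Proj B$ is the degree-zero part of $\Frac(B)$. Set $p = L/c$ and $q = L/a$; homogeneity of $f$ of degree $L$ forces $g$ to be an ordinary form of degree $p$ in the $X_i$ and $h$ to be an ordinary form of degree $q$ in the $Y_j$. Writing $d = \gcd(a,c)$, one has $p = a/d$ and $q = c/d$, so in particular $\gcd(p,q) = 1$.

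First I would exhibit generators of $K$. Introduce the degree-zero elements $x_i = X_i/X_1$ for $2 \le i \le k$, $y_j = Y_j/Y_1$ for $2 \le j \le \ell$, and $u = X_1^{p}/Y_1^{q}$. A short monomial bookkeeping argument shows that any degree-zero ratio of two monomials of equal degree is a Laurent monomial in the $x_i$, $y_j$ and $u$ (the constraint $c\,\Delta_X + a\,\Delta_Y = 0$ on exponents forces the $X_1,Y_1$ part to be a power of $u$, using $\gcd(p,q)=1$). Since any homogeneous element of $B$ is a $\bk$-combination of monomials of a fixed degree, dividing numerator and denominator by a single monomial of that degree shows that every element of $K$ lies in $\bk(x_2,\dots,x_k,y_2,\dots,y_\ell,u)$; hence $K = \bk(x_2,\dots,x_k,y_2,\dots,y_\ell,u)$.

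Next I would use the defining relation to eliminate $u$. Dehomogenizing gives $g = X_1^{p}\,\tilde g$ and $h = Y_1^{q}\,\tilde h$, where $\tilde g = g(1,x_2,\dots,x_k)$ and $\tilde h = h(1,y_2,\dots,y_\ell)$; because the map $e \mapsto (e_2,\dots,e_k)$ is injective on degree-$p$ exponent vectors, no cancellation occurs and $\tilde g \ne 0$ (and likewise $\tilde h \ne 0$) as polynomials. The relation $f = 0$ then reads $X_1^{p}\,\tilde g + Y_1^{q}\,\tilde h = 0$, i.e. $u\,\tilde g + \tilde h = 0$ in $K$. I must check that $\tilde g$ is nonzero as an element of $K$: if it vanished, then $g \in \lb f\rb$ in $R$, which by degree reasons forces $g = \lambda f$ for some constant $\lambda$, and since $g$ involves no $Y_j$ while $f$ does (as $h \ne 0$), this gives $g = 0$, a contradiction. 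Therefore $u = -\tilde h/\tilde g \in \bk(x_2,\dots,x_k,y_2,\dots,y_\ell)$, and so $K = \bk(x_2,\dots,x_k,y_2,\dots,y_\ell)$.

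Finally, $\dim \Proj B = (k+\ell-1) - 1 = k+\ell-2 = \trdeg_\bk K$, while $K$ is generated over $\bk$ by the $k+\ell-2$ elements $x_2,\dots,x_k,y_2,\dots,y_\ell$; these must then be algebraically independent, so $K = \bk^{(k+\ell-2)}$ and $\Proj(R/\lb f\rb)$ is rational over $\bk$. The only genuinely delicate points are the bookkeeping establishing that the three families $\{x_i\},\{y_j\},\{u\}$ generate $K$, and the verification that $\tilde g$ does not vanish in $K$; the remainder is the transcendence-degree count. Note that this argument uses neither well-formedness nor quasismoothness and works over an arbitrary field $\bk$.
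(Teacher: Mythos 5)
Your proposal is correct and takes essentially the same route as the paper's own proof: both exhibit the function field $K$ as generated over $\bk$ by the degree-zero ratios of the variables (you use $X_i/X_1$, $Y_j/Y_1$; the paper uses $x_i/x_k$, $y_j/y_\ell$) together with the auxiliary element $u$, via the same monomial bookkeeping, then eliminate $u$ using the relation $f = 0$ and conclude by the transcendence-degree count. The only deviations are minor: you skip the paper's preliminary reduction to the case $\gcd(a,c)=1$ (via $\Proj B \isom \Proj B^{(\gcd(a,c))}$) by working directly with the coprime exponents $p = L/c$ and $q = L/a$, and you justify the key nonvanishing of the denominator ($\tilde g \neq 0$ in $K$) by observing that its vanishing would force $g \in \lb f \rb$ and hence $g = \lambda f$, whereas the paper instead appeals to the algebraic independence of its elements $t_1, \dots, t_{k-1}$.
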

	
	\begin{proof}
		Let $B = R / \lb f \rb$ and write $B = \bk[x_1, \dots x_k, y_1, \dots y_\ell]$ where $x_i$ and $y_j$ are the canonical images of $X_i$ and $Y_j$ in $B$. Let $g = \gcd(a,c)$. Since $\Proj B \isom \Proj B^{(g)}$, we may assume without loss of generality that $\gcd(a,c) = 1$ and that $L = ac$. We define the following elements of the function field $K$ of $\Proj B$:
		\begin{equation*} t_i = \frac{x_i}{x_k} \text{ for each } i = 1, \dots, k-1, \quad u = \frac{x_k^a}{y_\ell^c}, \quad v_j = \frac{y_j}{y_\ell} \text{ for each } j = 1, \dots, \ell-1.
		\end{equation*}
		We claim that 
		\begin{equation}\label{someinclusions}
			B_{(y_\ell)} \subseteq \bk(t_1, \dots, t_{k-1}, u , v_1, \dots, v_{\ell-1}) \subseteq K
		\end{equation}
		where the second inclusion is obvious. To prove the first inclusion, observe that $B_{(y_\ell)}$ is generated by elements of the form
		\begin{equation}\label{monomials}
			\frac{x_1^{\alpha_1}\cdots x_k^{\alpha_k}y_1^{\beta_1} \cdots y_{\ell-1}^{\beta_{\ell - 1}}}{y_\ell^{\beta_\ell}} \text{  such that $\alpha_i, \beta_j \in \Nat$ and $c \sum_{i = 1}^k \alpha_i + a \sum_{j = 1}^{\ell - 1} \beta_j = a \beta_\ell$} .
		\end{equation}
		Consider any element from \eqref{monomials}. Since $\gcd(a,c) = 1$, $a \mid \sum_{i = 1}^k \alpha_i$. So, $\sum_{i = 1}^k \alpha_i = qa$ for some $q \in \Nat^+$ and $\alpha_k = qa - \sum_{i = 1}^{k-1}\alpha_i$. We then have 
		$$
		\beta_\ell = \frac{c\sum_{i = 1}^k \alpha_i}{a} + \sum_{j = 1}^{\ell - 1} \beta_j = \frac{(c\sum_{i = 1}^{k-1} \alpha_i) + c\alpha_k }{a}  + \sum_{j = 1}^{\ell - 1} \beta_j = \frac{(c\sum_{i = 1}^{k-1} \alpha_i) + cqa - c(\sum_{i = 1}^{k-1} \alpha_i) }{a}  + \sum_{j = 1}^{\ell - 1} \beta_j = cq + \sum_{j = 1}^{\ell - 1} \beta_j.
		$$
		We have 
		\begin{align*}
			\frac{x_1^{\alpha_1} \cdots x_k^{\alpha_k} y_1^{\beta_1} \dots y_{\ell-1}^{\beta_{\ell - 1}}}{y_\ell^{cq + \sum_{j = 1}^{\ell - 1}\beta_j}} &= \left(\frac{x_1}{x_k} \right)^{\alpha_1} \dots \left(\frac{x_{k-1}}{x_k}\right)^{\alpha_{k-1}} \left(\frac{x_k^{\sum_{i = 1}^k \alpha_i}}{y_\ell^{cq}} \right) \left(\frac{y_1}{y_\ell}\right)^{\beta_1} \dots \left(\frac{y_{\ell -1}}{y_\ell}\right)^{\beta_{\ell-1}}  \\
			&=  \left(\frac{x_1}{x_k} \right)^{\alpha_1} \dots \left(\frac{x_{k-1}}{x_k}\right)^{\alpha_{k-1}} \left(\frac{x_k^{qa}}{y_\ell^{cq}} \right) \left(\frac{y_1}{y_\ell}\right)^{\beta_1} \dots \left(\frac{y_{\ell -1}}{y_\ell}\right)^{\beta_{\ell-1}} \\
			& = t_1^{\alpha_1} \dots t_{k-1}^{\alpha_{k-1}} u^q v_1^{\beta_1} \dots v_{\ell - 1}^{\beta_{\ell - 1}}.  
		\end{align*}
		This shows that every element from \eqref{monomials} is an element of $\bk(t_1, \dots, t_{k-1}, u, v_1, \dots, v_{\ell - 1})$ and consequently \eqref{someinclusions} is true. Since $\Frac{B_{(y_\ell)}} = K$, it follows from \eqref{someinclusions} that $K = \bk(t_1, \dots, t_{k-1}, u, v_1, \dots,  v_{\ell - 1})$. Since $\trdeg_\bk K = k + \ell -2$, it now suffices to show that $u \in \bk(t_1, \dots, t_{k-1}, v_1, \dots,  v_{\ell - 1})$. 
		
		Let $M = \setspec{(\mu_1, \dots, \mu_k) \in \Nat^k}{\sum_{i=1}^k \mu_i = a}$ and let $N = \setspec{(\nu_1, \dots, \nu_\ell) \in \Nat^\ell}{\sum_{i=1}^\ell \nu_i = c}$. Write 
		$$g(X_1, \dots, X_k) = \sum_{(\mu_1, \dots, \mu_k) \in M} a_m X_1^{\mu_1} \dots X_k^{\mu_k}  \text{ and } h(Y_1, \dots, Y_\ell) = \sum_{(\nu_1, \dots, \nu_\ell) \in N} b_n Y_1^{\nu_1} \dots Y_\ell^{\nu_\ell}$$ where $m = (\mu_1, \dots, \mu_k) \in M$, $n = (\nu_1, \dots, \nu_\ell) \in N$ and the coefficients $a_m \in \bk$ (resp $b_n \in \bk$) are not all zero. Then
		\begin{align*}
			0 &= g(x_1, \dots, x_k) + h(y_1, \dots, y_\ell) = \sum_{(\mu_1, \dots, \mu_k) \in M} a_m x_1^{\mu_1} \dots x_k^{\mu_k} + \sum_{(\nu_1, \dots, \nu_\ell) \in N} b_n y_1^{\nu_1} \dots y_\ell^{\nu_\ell} \\
			0 &= \sum_{(\mu_1, \dots, \mu_k) \in M}\frac{ a_m x_1^{\mu_1} \dots x_{k-1}^{\mu_{k-1}} x_k^{\mu_k} x_k^{\sum_{i=1}^{k-1}\mu_i}}{x_k^{\sum_{i=1}^{k-1}\mu_i} y_\ell^c} + \sum_{(\nu_1, \dots, \nu_\ell) \in N} \frac{b_n y_1^{\nu_1} \dots y_\ell^{\nu_\ell}}{y_\ell^c} \\
			& = \sum_{(\mu_1, \dots, \mu_k) \in M}\frac{ a_m x_1^{\mu_1} \dots x_{k-1}^{\mu_{k-1}} x_k^a}{x_k^{\sum_{i=1}^{k-1}\mu_i} y_\ell^c} + \sum_{(\nu_1, \dots, \nu_\ell) \in N} \frac{b_n y_1^{\nu_1} \dots y_\ell^{\nu_\ell}}{y_\ell^c} \\
			& = u \sum_{(\mu_1, \dots, \mu_k) \in M} a_m t_1^{\mu_1} \dots t_{k-1}^{\mu_{k-1}} + \sum_{(\nu_1, \dots, \nu_\ell) \in N} b_n v_1^{\nu_1} \dots v_{\ell-1}^{\nu_{\ell-1}}
		\end{align*}
		from which it follows that $$u = \frac{-\sum_{(\nu_1, \dots, \nu_\ell) \in N} b_n v_1^{\nu_1} \dots v_{\ell-1}^{\nu_{\ell-1}}}{\sum_{(\mu_1, \dots, \mu_k) \in M} a_m t_1^{\mu_1} \dots t_{k-1}^{\mu_{k-1}}}$$
		noting that the denominator is non-zero because not all the $a_m$ are zero and $t_1, \dots, t_{k-1}$ are algebraically independent over $\bk$. It follows that $u \in \bk(t_1, \dots, t_{k-1}, v_1, \dots,  v_{\ell - 1})$ which completes the proof.
		
	\end{proof}
	
	\begin{lemma}\label{mn0}
		Let $a,c \in \Nat^+$ and suppose $\gcd(a,c) = 1$. If $m,n\in \Nat$ satisfy $ma + nc = ac$, then either $m = 0$ or $n = 0$. 
	\end{lemma}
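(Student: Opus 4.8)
The plan is to use the coprimality hypothesis together with Euclid's lemma twice to pin down the shape of $m$ and $n$. The equation $ma + nc = ac$ is symmetric enough that divisibility information falls out immediately upon reducing modulo each of $a$ and $c$.

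First I would reduce the relation $ma + nc = ac$ modulo $a$. Since $ma \equiv 0$ and $ac \equiv 0 \pmod{a}$, this forces $nc \equiv 0 \pmod{a}$, i.e. $a \mid nc$. As $\gcd(a,c) = 1$, Euclid's lemma gives $a \mid n$. By the symmetric computation modulo $c$ one obtains $c \mid m$. Because $m, n \in \Nat$, I may therefore write $m = c\,m'$ and $n = a\,n'$ for some $m', n' \in \Nat$.

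Substituting these back into $ma + nc = ac$ yields $c m' a + a n' c = ac$, that is $ac(m' + n') = ac$. Dividing by the nonzero integer $ac$ gives $m' + n' = 1$. Since $m', n'$ are nonnegative integers summing to $1$, exactly one of them equals $0$: if $m' = 0$ then $m = 0$, and if $n' = 0$ then $n = 0$. This is the desired dichotomy.

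There is no real obstacle here; the only point requiring care is invoking $\gcd(a,c) = 1$ at the right moment to upgrade the two divisibilities $a \mid nc$ and $c \mid ma$ into $a \mid n$ and $c \mid m$. Everything else is a direct substitution, so the argument is short and self-contained.
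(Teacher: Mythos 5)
Your proof is correct and follows essentially the same route as the paper: both hinge on using $\gcd(a,c)=1$ to upgrade $c \mid ma$ to $c \mid m$. The paper finishes by bounding ($c \mid m$ and $ma \leq ac$ force $m \in \{0, c\}$), whereas you symmetrize and also extract $a \mid n$ to get $m' + n' = 1$; this is a negligible difference.
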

	\begin{proof}
		Assume $ma + nc = ac$. Then $ma = (a-n)c$ so $c \mid ma$. Since $\gcd(a,c) = 1$, $c \mid m$. So $m = 0$ or $m = c$, in which case $n = 0$. 
	\end{proof}
	
	\begin{corollary}\label{rationalDim2}
		Let $a,c \in \Nat^+$ be such that $\gcd(a,c) = 1$ and let $B = \Comp_{c,c,a,a}[X_0,X_1,X_2,X_3] / \lb f \rb$ where $f$ is irreducible and homogeneous of degree $ac$. Then $\Proj B$ is rational over $\Comp$. 
	\end{corollary}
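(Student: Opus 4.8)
The plan is to deduce this corollary from Proposition \ref{rationalAlldim}. Since $\gcd(a,c) = 1$ we have $L = \lcm(a,c) = ac$, so the degree of $f$ matches exactly the degree appearing in that proposition; the only hypothesis that is not yet available is the decomposition $f = g(X_0,X_1) + h(X_2,X_3)$ demanded there. My first task is therefore to show that \emph{every} homogeneous $f$ of degree $ac$ in $\Comp_{c,c,a,a}[X_0,X_1,X_2,X_3]$ automatically splits in this way.

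To establish the splitting I would examine a single monomial $X_0^{\alpha_0}X_1^{\alpha_1}X_2^{\alpha_2}X_3^{\alpha_3}$ occurring in $f$. Homogeneity forces $c(\alpha_0+\alpha_1) + a(\alpha_2+\alpha_3) = ac$, and applying Lemma \ref{mn0} with $m = \alpha_2+\alpha_3$ and $n = \alpha_0+\alpha_1$ yields $m = 0$ or $n = 0$. Hence each monomial of $f$ involves only $X_0,X_1$ or only $X_2,X_3$, which gives precisely $f = g(X_0,X_1) + h(X_2,X_3)$ with both summands homogeneous of degree $ac$. When $g$ and $h$ are both nonzero, Proposition \ref{rationalAlldim} (with $k = \ell = 2$) applies verbatim and shows that $\Proj B$ is rational.

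The remaining and principal obstacle is the degenerate case in which one summand vanishes, say $h = 0$, since Proposition \ref{rationalAlldim} requires both parts to be nonzero. Here $f = g(X_0,X_1)$ is a binary form; as a form of ordinary degree $ac/c = a$ over the algebraically closed field $\Comp$ it factors into $a$ linear forms, so irreducibility of $f$ forces $a = 1$ and $f$ to be a nonzero linear form in $X_0,X_1$. Eliminating one of these variables identifies $B$ with the graded polynomial ring $\Comp_{c,1,1}[X_0,X_2,X_3]$, whence $\Proj B \cong \PPP(c,1,1)$ is a weighted projective space and therefore rational by \ref{weightedComments}; the case $g = 0$ is symmetric, forcing $c = 1$ and $\Proj B \cong \PPP(1,1,a)$. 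I expect the bookkeeping in these degenerate cases — verifying that irreducibility genuinely collapses the form to a linear one and that the resulting quotient is a weighted projective space — to be the only delicate point, since the core splitting step is an immediate consequence of Lemma \ref{mn0}.
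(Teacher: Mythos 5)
Your proposal is correct and follows essentially the same route as the paper: Lemma \ref{mn0} yields the splitting $f = g(X_0,X_1) + h(X_2,X_3)$, Proposition \ref{rationalAlldim} handles the case where both summands are nonzero, and in the degenerate case irreducibility forces $f$ to be a linear form, making $\Proj B$ a weighted projective space, hence rational. The only cosmetic difference is that you make $a=1$ (resp.\ $c=1$) explicit in the degenerate case, whereas the paper phrases that case via the images $\bar{g},\bar{h}$ in $B$ and passes through $\PPP(c,a,a) \isom \PPP(c,1,1)$.
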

	
	\begin{proof}
		By Lemma \ref{mn0}, $f = g(X_0,X_1) + h(X_2,X_3)$ where each of $g(X_0,X_1)$ and $h(X_2,X_3)$ is either zero or is homogeneous of degree $ac$. We write $B = \Comp[x_0, x_1, x_2, x_3]$ where $x_i$ is the canonical image of $X_i$ in $B$. Let $g = g(X_0,X_1)$, $h = h(X_2,X_3)$ and let $\bar{f}, \bar{g}$ and $\bar{h}$ denote the images of $f,g$ and $h$ in $B$. 
		
		If $\bar{g} = 0$, then $f \mid g$ so $f = g$. Since $f$ is irreducible, $ f = \lambda_0 X_0 + \lambda_1 X_1$ where $\lambda_0,\lambda_1 \in \Comp$ are not both 0. We then obtain that $\Proj B \isom \PPP(c,a,a) \isom \PPP(c,1,1)$ which is rational. The same argument shows that $\Proj B$ is rational if $\bar{h} = 0$. Assume now that $\bar{g} \neq 0$ and $\bar{h} \neq 0$. Then $g(X_0,X_1)$ and $h(X_2,X_3)$ are nonzero and the result follows from Proposition \ref{rationalAlldim}.  
	\end{proof}

	We state Lemma 7.1 of \cite{iano-fletcher_2000}. Note that the last line should read $A_{-(k-\alpha)}$ instead of $A_{-k-\alpha}$ as in \cite{iano-fletcher_2000}.
	
	\begin{lemma}
		\label{FletcherCohomology}
		Let $X = X_{f_1, \dots, f_r} \subseteq \PPP(w_0, \dots, w_n)$ be a well-formed quasismooth weighted complete intersection. Let $A$ be the graded ring $\Comp_{w_0, \dots, w_n}[X_0, \dots, X_n] / \lb f_1, \dots, f_r \rb$. Then,

		\[
		H^i(X, \OSheaf_{X}(k)) \isom 
		\begin{cases}
			A_k & \text{if } i = 0\\
			0  & \text{if } 1 \leq i < \dim X \\
			A_{-(k-\alpha)} & \text{if } i = \dim X \\
		\end{cases}
		\]
		for all $k \in \Integ$. 
	\end{lemma}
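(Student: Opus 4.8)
The plan is to pass from sheaf cohomology on $X = \Proj A$ to local cohomology of the graded ring $A$ at its irrelevant maximal ideal $\mathfrak{m} = A_+$, and then to invoke graded local duality. Recall first that, since $(f_1, \dots, f_r)$ is a regular sequence of homogeneous elements, $A$ is a graded complete intersection, hence Cohen--Macaulay, of dimension $d = \dim C_X = \dim X + 1$ (compare \ref{weightedCompleteIntersection}). Because $\OSheaf_X(k)$ is by definition the sheaf $\widetilde{A(k)}$ on $\Proj A$, the standard comparison between local and sheaf cohomology yields an exact sequence
$$0 \to H^0_{\mathfrak{m}}(A)_k \to A_k \to H^0(X, \OSheaf_X(k)) \to H^1_{\mathfrak{m}}(A)_k \to 0$$
together with isomorphisms $H^i(X, \OSheaf_X(k)) \isom H^{i+1}_{\mathfrak{m}}(A)_k$ for every $i \geq 1$ and every $k \in \Integ$.

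The Cohen--Macaulay property then does almost all the work, since for a Cohen--Macaulay graded ring of dimension $d$ one has $H^j_{\mathfrak{m}}(A) = 0$ for all $j < d$. First I would treat the case $i = 0$: as $\dim X \geq 1$ forces $d \geq 2$, both $H^0_{\mathfrak{m}}(A)$ and $H^1_{\mathfrak{m}}(A)$ vanish, so the four-term sequence collapses to $H^0(X, \OSheaf_X(k)) \isom A_k$. Next, for $1 \leq i < \dim X$ the isomorphisms above identify $H^i(X, \OSheaf_X(k))$ with $H^{i+1}_{\mathfrak{m}}(A)_k$; here $i+1 \leq \dim X < d$, so this group vanishes. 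The only surviving cohomology is in degree $i = \dim X$, where $H^{\dim X}(X, \OSheaf_X(k)) \isom H^{d}_{\mathfrak{m}}(A)_k$.

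It remains to identify $H^{d}_{\mathfrak{m}}(A)_k$ with $A_{-(k-\alpha)}$, and this is the step demanding the most care. By graded local duality, $H^{d}_{\mathfrak{m}}(A)$ is the graded Matlis dual of the canonical module $\omega_A$, so that $H^{d}_{\mathfrak{m}}(A)_k$ is $\Comp$-dual to $(\omega_A)_{-k}$. I would then compute $\omega_A$ from the complete intersection presentation $A = S/\lb f_1, \dots, f_r \rb$ with $S = \Comp_{w_0,\dots,w_n}[X_0,\dots,X_n]$: since $\omega_S = S\bigl(-\sum_j w_j\bigr)$ and adjunction for a regular sequence of degrees $d_1, \dots, d_r$ gives $\omega_A = \Ext^r_S(A, \omega_S) = A\bigl(\sum_i d_i - \sum_j w_j\bigr) = A(\alpha)$, where $\alpha$ is the amplitude. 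Hence $(\omega_A)_{-k} = A_{\alpha - k} = A_{-(k-\alpha)}$, and because each graded piece $A_m$ is a finite-dimensional $\Comp$-vector space, its dual is (non-canonically) isomorphic to itself; this yields $H^{\dim X}(X, \OSheaf_X(k)) \isom A_{-(k-\alpha)}$, as required. The main obstacle is assembling the local-cohomology dictionary and the graded local duality statement with the correct grading conventions, together with verifying the canonical module formula $\omega_A = A(\alpha)$; once these are in place, the vanishing statements follow immediately from Cohen--Macaulayness.
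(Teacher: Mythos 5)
Your argument is correct, but note that the paper does not prove this lemma at all: it is quoted verbatim from Lemma 7.1 of Iano-Fletcher's \emph{Working with weighted complete intersections} (with a typo in the top-degree term corrected), so there is no internal proof to compare against. What you have written is essentially the standard proof of that cited result, and it goes through: the comparison sequence relating $H^i(X,\OSheaf_X(k))$ to $H^{i+1}_{\mathfrak{m}}(A)_k$ is valid for arbitrary positively graded Noetherian rings (not just those generated in degree one; see Brodmann--Sharp, Ch.~20), Cohen--Macaulayness of the complete intersection $A$ gives the vanishing below the top degree, and the Koszul computation $\omega_A = \Ext^r_S(A,\omega_S) = A(\alpha)$ combined with graded local duality (in its non-standard-graded form, as in Goto--Watanabe) identifies the top cohomology, confirming that $A_{-(k-\alpha)}$, not $A_{-k-\alpha}$, is the right answer. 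Two small points to make explicit: you implicitly assume $\dim X \geq 1$ (needed so that $i=0$ and $i=\dim X$ are distinct cases and so that $d \geq 2$ kills $H^0_{\mathfrak{m}}$ and $H^1_{\mathfrak{m}}$), which is harmless since the paper only uses the lemma for surfaces and higher; and your proof never uses quasismoothness or well-formedness, only the regular-sequence hypothesis, so your argument in fact proves a slightly more general statement than the one quoted.
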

	
	\begin{proposition}\label{notUniruled}
		Let $X = X_{f_1, \dots, f_r} \subseteq \PPP(w_0, \dots, w_n)$ be a well-formed quasismooth weighted complete intersection of dimension $q$ that is not contained in a hyperplane. If $\alpha$ belongs to the submonoid $\lb w_0, \dots, w_n \rb$ of $(\Nat, +)$, then $h^q(X, \OSheaf_X) > 0$. Moreover, $X$ is not uniruled and not rational. 
	\end{proposition}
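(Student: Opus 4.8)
The plan is to read off $h^q(X,\OSheaf_X)$ directly from Lemma \ref{FletcherCohomology}, reduce its positivity to a one-line statement about a single graded piece of the coordinate ring, and then convert the cohomological conclusion into non-uniruledness via Serre duality and the rational-singularities hypothesis.

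Write $A = \Comp_{w_0,\dots,w_n}[X_0,\dots,X_n]/\lb f_1,\dots,f_r\rb$ and let $x_j$ denote the image of $X_j$ in $A$. Since $X$ is a variety, its defining ideal is prime and $A$ is a graded domain. Taking $i = q = \dim X$ and $k = 0$ in Lemma \ref{FletcherCohomology} gives $H^q(X,\OSheaf_X) \isom A_{-(0-\alpha)} = A_\alpha$, so it suffices to prove $A_\alpha \neq 0$. First I would use the hypothesis that $X$ is not contained in a hyperplane: this says $X \not\subseteq V_+(X_j)$ for every $j$, i.e.\ $x_j \neq 0$ in $A$ for all $j$. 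Since $\alpha \in \lb w_0,\dots,w_n\rb$, we may write $\alpha = \sum_{j=0}^n c_j w_j$ with $c_j \in \Nat$, and then the monomial $\prod_{j=0}^n x_j^{c_j}$ is a nonzero element of $A_\alpha$, being a product of nonzero elements of a domain. Hence $h^q(X,\OSheaf_X) = \dim_\Comp A_\alpha > 0$.

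To finish, I would pass to a resolution $\pi\colon \widetilde X \to X$. Because $X$ is normal and Cohen–Macaulay with (rational) quotient singularities, one has $\pi_*\OSheaf_{\widetilde X} = \OSheaf_X$ and $R^i\pi_*\OSheaf_{\widetilde X} = 0$ for $i>0$, so the Leray spectral sequence degenerates and yields $H^q(\widetilde X,\OSheaf_{\widetilde X}) \isom H^q(X,\OSheaf_X) \neq 0$. Serre duality on the smooth projective variety $\widetilde X$ then gives $h^0(\widetilde X,\omega_{\widetilde X}) = h^q(\widetilde X,\OSheaf_{\widetilde X}) > 0$, so the geometric genus $p_g(\widetilde X)$ is positive. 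Since a smooth projective uniruled variety over $\Comp$ has $p_g = 0$, the variety $\widetilde X$ is not uniruled; as uniruledness is a birational invariant, $X$ is not uniruled either, and since every rational variety is uniruled, $X$ is not rational.

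The algebraic input ($A_\alpha \neq 0$) is routine once one invokes that $A$ is a domain and that no coordinate vanishes on $X$. I expect the only genuine subtlety to be the final step: transferring the positivity of $h^q(X,\OSheaf_X)$, a statement about the possibly singular $X$, to the geometric genus of a smooth model. This is precisely where the rational-singularities hypothesis recorded in \ref{weightedCompleteIntersection} is essential, together with the standard fact that uniruled varieties have vanishing geometric genus.
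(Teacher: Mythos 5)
Your proposal is correct and follows essentially the same route as the paper's proof: identify $H^q(X,\OSheaf_X)$ with $A_\alpha$ via Lemma \ref{FletcherCohomology}, produce a nonzero monomial in $A_\alpha$ from the hypothesis that $X$ lies in no hyperplane, transfer to a resolution using rational singularities, and conclude non-uniruledness from positive geometric genus. Your write-up is merely more explicit at two points (the domain argument for the monomial $\prod_j x_j^{c_j}$, and the Leray/Serre-duality steps that the paper leaves implicit), which is a fine elaboration rather than a different approach.
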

	
	\begin{proof}
		Let $A$ be the graded ring $\Comp_{w_0, \dots, w_n}[X_0, \dots, X_n] / \lb f_1, \dots, f_r \rb$ and let $\tilde{X}$ be a resolution of singularities of $X$. Suppose $\alpha \in \lb w_0, \dots, w_n \rb$. If $\alpha = 0$, then $A_\alpha = \Comp$. Otherwise,  $\alpha > 0$ and since $X$ is not contained in a hyperplane, $A_\alpha$ contains a nonzero monomial of $A$. We obtain
		$$0 \neq A_\alpha \isom H^q(X, \OSheaf_X) \isom H^q(\tilde{X}, \OSheaf_{\tilde{X}}),$$ 
		the first isomorphism by Lemma \ref{FletcherCohomology}, the second since $X$ has rational singularities. This shows that $0 < h^q(X, \OSheaf_X) = h^q(\tilde{X}, \OSheaf_{\tilde{X}})$. Since $\tilde{X}$ has non-zero genus, it has non-negative Kodaira dimension, and hence is not uniruled. It follows that $X$ is not uniruled and not rational.
	\end{proof}
	
	\begin{example}
		Proposition \ref{notUniruled} may fail if $X$ is contained in a hyperplane. Let $f_1 = X_0^3 + X_1^3 + X_2^7 + X_3^7$, $f_2 = X_4$ and let $X = X_{f_1, f_2} \subset \PPP(7,7,3,3,1)$. Then $X$ is quasismooth and well-formed, the amplitude $\alpha = 1$ belongs to the submonoid $\lb 7,7,3,3,1 \rb$ of $\Nat^+$ but $X \isom \Proj B_{3,3,7,7}$ is rational by Proposition \ref{rationalAlldim}. 
	\end{example}
	
	\begin{remark}\label{PSRemark}
		In Proposition 2.13 (ii) of \cite{przyjalkowski2021automorphisms}, the authors prove that if $X = X_{f_1, \dots, f_r} \subseteq \PPP(w_0, \dots, w_n)$ is a quasismooth well-formed weighted complete intersection,  $\alpha > 0$ and $w_i \mid \alpha$ for all $i \in \{0,\dots, n\}$, then $X$ is not uniruled. They then remark that they do not know whether the assumption that $w_i \mid \alpha$ for all $i$ is necessary for their result to hold. Proposition \ref{notUniruled} shows that their assumption can often be relaxed. 
	\end{remark}
	
	\begin{corollary} \label{prelimRationality}
		Suppose $X = X_f \subset \PPP(w_0,w_1,w_2,w_3)$ is a well-formed quasismooth weighted hypersurface of degree $d \in \Nat^+$. 
		\begin{enumerate}[\rm(a)]
			\item If $\alpha < 0$, then $X$ is rational. 
			\item If $\alpha$ belongs to the submonoid $\lb w_0, w_1, w_2, w_3 \rb$ of $(\Nat, +)$ then $h^2(X,\OSheaf_X) > 0$ and $X$ is not rational.     
			\item If $d = \sum_{i=0}^3 w_i$ or $d \geq \max\{2L,  \sum_{i=0}^3 w_i\}$ where $L = \lcm(w_0,w_1,w_2,w_3)$, $X$ is not rational.
		\end{enumerate}
	\end{corollary}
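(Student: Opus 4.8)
The plan is to prove the three parts by sorting on the sign of the amplitude $\alpha = d - \sum_{i=0}^3 w_i$. Part (a) concerns $\alpha < 0$ and will follow by exhibiting $X$ as a del Pezzo surface; parts (b) and (c) concern $\alpha \ge 0$ and will both be reduced to the single assertion that $\alpha$ lies in the submonoid $\lb w_0,w_1,w_2,w_3 \rb$ of $(\Nat,+)$, after which Proposition \ref{notUniruled} closes the argument.

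For part (a), I would first record via \ref{weightedComments} that $X$ is a normal projective surface with at most cyclic quotient singularities, so the only thing to check for the del Pezzo property is that $-K_X$ is ample. The key identity is the adjunction formula for a well-formed quasismooth weighted complete intersection, $\omega_X \isom \OSheaf_X(\alpha)$ (this is essentially the top case of Lemma \ref{FletcherCohomology} read through Serre duality). Writing $A = \Comp_{w_0,\dots,w_3}[X_0,\dots,X_3]/\lb f \rb$ and $X = \Proj A$, one checks that $e(A) = 1$ (well-formedness of $\PPP$ forces $\gcd(w_0,\dots,w_3)=1$, so $A_i \neq 0$ for all large $i$) and that $\height(A_+) = \dim A = 3 > 1$; Lemma \ref{BetterDemazure}(a) then supplies an ample $\Rat$-divisor $D$ with $\OSheaf_X(n) = \OSheaf_X(nD)$ for all $n \in \Integ$. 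Hence $K_X \sim_\Rat \alpha D$, and since $\alpha < 0$ this makes $-K_X \sim_\Rat (-\alpha) D$ ample. Thus $X$ is a del Pezzo surface, and del Pezzo surfaces over $\Comp$ are rational.

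For part (b), the assumption $\alpha \in \lb w_0,w_1,w_2,w_3 \rb \subseteq \Nat$ forces $\alpha \ge 0$, i.e. $d \ge \sum_{i=0}^3 w_i > w_i$ for each $i$; in particular $f$ is not a scalar multiple of any coordinate $X_i$, so, being irreducible, $f$ divides no monomial. Consequently no monomial of degree $\alpha$ is killed in $A$, which is exactly the statement that $X$ is not contained in a coordinate hyperplane and that $A_\alpha \neq 0$. Proposition \ref{notUniruled} then gives $h^2(X,\OSheaf_X) > 0$ and the non-rationality of $X$.

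For part (c), I would verify in each case that $\alpha \in \lb w_0,w_1,w_2,w_3 \rb$ and then quote part (b). If $d = \sum_{i=0}^3 w_i$ then $\alpha = 0$, which lies in every submonoid. If $d \ge \max\{2L, \sum_{i=0}^3 w_i\}$ then $\alpha \ge \max\{2L - \sum_{i=0}^3 w_i,\, 0\}$, so Theorem A(i), applied to the well-formed tuple $(w_0,w_1,w_2,w_3)$, yields $\alpha \in \lb w_0,w_1,w_2,w_3 \rb$. I expect the main obstacle to be part (a): one must carefully assemble the three ingredients, namely the quotient singularities, the canonical-bundle identity $\omega_X \isom \OSheaf_X(\alpha)$, and the Demazure ample $\Rat$-divisor, so that the del Pezzo property is genuinely verified rather than merely asserted; by contrast (b) and (c) are essentially bookkeeping on top of Proposition \ref{notUniruled} and Theorem A.
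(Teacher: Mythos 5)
Your parts (b) and (c) coincide with the paper's own argument: in (b) you deduce $\alpha \ge 0$, hence that $f$ cannot be a scalar multiple of a coordinate and $X$ is not contained in a hyperplane, and then invoke Proposition \ref{notUniruled}; in (c) you handle $d = \sum_{i=0}^3 w_i$ via $\alpha = 0$ and $d \ge \max\{2L, \sum_{i=0}^3 w_i\}$ via Theorem A(i) applied to the well-formed tuple $(w_0,w_1,w_2,w_3)$, then quote (b). Part (a) is where you diverge: the paper disposes of it in one line by citing the known fact that a well-formed quasismooth hypersurface with $\alpha<0$ is a del Pezzo surface with quotient singularities, whereas you attempt to reconstruct this fact from Lemma \ref{BetterDemazure}. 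That reconstruction, as written, has a genuine gap.

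The gap is in the step ``Hence $K_X \sim_{\Rat} \alpha D$.'' Lemma \ref{BetterDemazure}(a) only provides an ample $\Rat$-divisor $D$, and the identity $\OSheaf_X(n) = \OSheaf_X(nD)$ is an identity of sheaves, where $\OSheaf_X(nD)$ means $\OSheaf_X(\lfloor nD \rfloor)$. From $\OSheaf_X(K_X) \isom \OSheaf_X(\alpha) \isom \OSheaf_X(\lfloor \alpha D \rfloor)$ you may conclude $K_X \sim \lfloor \alpha D \rfloor$, but not $K_X \sim_{\Rat} \alpha D$: the latter would force $\alpha D - \lfloor \alpha D \rfloor \sim_{\Rat} 0$, which is impossible whenever $\alpha D$ is non-integral, since a nonzero effective $\Rat$-divisor on a projective variety is never $\Rat$-linearly trivial. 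Consequently what you actually get is $-K_X \sim (-\alpha)D + \bigl(\alpha D - \lfloor \alpha D \rfloor\bigr)$, an ample $\Rat$-divisor plus an effective divisor with fractional coefficients, and such a sum need not be ample (a curve inside the support of the fractional part can meet it negatively), so the del Pezzo property does not follow. The repair is available from tools already in the paper, and it is exactly the route the paper itself takes in \ref{ampleDiscussion}: since $X$ is well-formed, Proposition \ref{wellformedIFFSatCodim1} gives that the coordinate ring of the affine cone is saturated in codimension $1$, so Lemma \ref{BetterDemazure}(b) upgrades $D$ to an integral divisor; then $\lfloor \alpha D \rfloor = \alpha D$, so $K_X = \alpha D$ and $-K_X = (-\alpha)D$ is ample because $-\alpha > 0$ and $D$ is ample and integral. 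With that one step inserted, your proof of (a) becomes a correct, self-contained alternative to the paper's citation.
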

	
	\begin{proof}
		For (a), it is well-known that $X$ is a del Pezzo surface with quotient singularities (see p.790 of \cite{Cheltsov2008ExceptionalDP}) and so $X$ is rational. For (b), we note that if $\alpha \in \lb w_0, w_1, w_2, w_3 \rb$ then $\alpha \geq 0$ and so $X$ is not contained in a hyperplane and the result follows from Proposition \ref{notUniruled}. Part (c) follows from Theorem A(i) and part (b). 
	\end{proof}
	
	\begin{theorem}\label{hypersurfaceDegree}
		Let $w_0,w_1,w_2,w_3 \in \Nat^+$ where $w_0 \leq w_1 \leq w_2 \leq w_3$ and let $L = \lcm(w_0,w_1,w_2,w_3)$. Suppose $X = X_f \subset \PPP(w_0,w_1,w_2,w_3)$ is a well-formed quasismooth hypersurface of degree $nL$ for some $n \in \Nat^+$. Then the following are equivalent:
		\begin{enumerate}[\rm(a)]
			\item $X$ is rational,
			\item $h^2(X, \OSheaf_X) = 0$,
			\item one of the following holds
			\begin{enumerate}[\rm(i)]
				\item $n = 1$, $w_0 = w_1$, $w_2 = w_3$ and $\gcd(w_0,w_2) = 1$;
				\item $nL - \sum_{i=0}^3 w_i < 0$.
				
			\end{enumerate} 
		\end{enumerate}
		
	\end{theorem}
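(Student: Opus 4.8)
The plan is to convert everything into statements about the amplitude $\alpha = nL - \sum_{i=0}^3 w_i$ and the membership problem $\alpha \in \Gamma$, where $\Gamma = \lb w_0,w_1,w_2,w_3\rb$ is the submonoid of $(\Nat,+)$ generated by the weights, and then to close a cyclic chain $(a)\Rightarrow(b)\Rightarrow(c)\Rightarrow(a)$. Throughout, $X$ is a surface, so $\dim X = 2$; write $S = \Comp_{w_0,\dots,w_3}[X_0,\dots,X_3]$ and $A = S/\lb f\rb$.

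First I would record the cohomological meaning of (b). Applying Lemma \ref{FletcherCohomology} with $i = \dim X = 2$ and $k = 0$ gives $H^2(X,\OSheaf_X) \isom A_{-(0-\alpha)} = A_\alpha$. Since $\deg f = nL$ and $\alpha - nL = -\sum_i w_i < 0$, the degree-$\alpha$ piece of the ideal $\lb f\rb$ is zero, so $A_\alpha = S_\alpha$; and $\dim_\Comp S_\alpha$ counts monomials of degree $\alpha$, which is positive exactly when $\alpha \in \Gamma$. Thus (b) is equivalent to $\alpha \notin \Gamma$, the case $\alpha < 0$ being automatically outside $\Gamma$ since $\Gamma \subseteq \Nat$.

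Then I would run the three implications. For $(a)\Rightarrow(b)$: as $X$ is a well-formed quasismooth weighted complete intersection it has at worst rational quotient singularities, so for a resolution $\tilde X \to X$ we have $h^2(X,\OSheaf_X) = h^2(\tilde X,\OSheaf_{\tilde X})$; if $X$ is rational then $\tilde X$ is a smooth projective rational surface and $h^2(\tilde X,\OSheaf_{\tilde X}) = p_g(\tilde X) = 0$ (equivalently, invoke the contrapositive of Corollary \ref{prelimRationality}(b)). For $(b)\Rightarrow(c)$: assume $\alpha \notin \Gamma$; if $\alpha < 0$ this is (c)(ii), while if $\alpha \geq 0$ then $nL - \sum_i w_i \in \Nat \setminus \Gamma$, and since $(w_0,\dots,w_3)$ is well-formed, Theorem A(ii) forces $n = 1$ and $|\{w_0,w_1,w_2,w_3\}| = 2$; Remark \ref{easyApplication} (using $w_0 \le w_1 \le w_2 \le w_3$) then gives $w_0 = w_1$, $w_2 = w_3$, $\gcd(w_0,w_2) = 1$, i.e.\ (c)(i). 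For $(c)\Rightarrow(a)$: in case (c)(ii), $\alpha < 0$ and Corollary \ref{prelimRationality}(a) yields rationality; in case (c)(i), $n = 1$ and $\deg f = L = \lcm(w_0,w_2) = w_0 w_2$, so $X = X_f \subset \PPP(w_0,w_0,w_2,w_2)$ falls under Corollary \ref{rationalDim2} with $c = w_0$, $a = w_2$ and $f$ irreducible of degree $ac$, hence is rational.

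Routing through the cycle, rather than proving $(b)\Leftrightarrow(c)$ head-on, is what keeps the assembly economical: it lets me deduce $(c)(i)\Rightarrow(a)$ straight from Corollary \ref{rationalDim2}, sidestepping the converse arithmetic claim $(c)(i)\Rightarrow \alpha\notin\Gamma$, which would otherwise require invoking the symmetry of the two-generator numerical semigroup $\lb w_0,w_2\rb$. All the real content is therefore outsourced to the results already available—the cohomology identity to Lemma \ref{FletcherCohomology}, the two rationality constructions to Corollaries \ref{prelimRationality}(a) and \ref{rationalDim2}, and the delicate arithmetic to Theorem A(ii). Consequently the main obstacle is not in this bookkeeping but in Theorem A(ii) itself, which is exactly the step that eliminates every nontrivial nonnegative $\alpha$ outside $\Gamma$ apart from the single $(w_0,w_0,w_2,w_2)$ family.
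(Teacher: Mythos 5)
Your proof is correct and follows essentially the same route as the paper's: the same cycle (a)$\Rightarrow$(b)$\Rightarrow$(c)$\Rightarrow$(a), with rational singularities handling (a)$\Rightarrow$(b), Theorem A(ii) (plus well-formedness and the ordering of the weights) handling (b)$\Rightarrow$(c), and Corollaries \ref{rationalDim2} and \ref{prelimRationality}(a) handling the two cases of (c)$\Rightarrow$(a). The only cosmetic difference is that you re-derive the equivalence $h^2(X,\OSheaf_X)=0 \Leftrightarrow \alpha \notin \Gamma$ directly from Lemma \ref{FletcherCohomology} (valid here since $\deg f = nL > \alpha$ makes $A_\alpha = S_\alpha$), whereas the paper just cites Corollary \ref{prelimRationality}(b), which packages the same computation.
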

	
	\begin{proof}
		
		Let $\Gamma = \langle w_0, w_1, w_2, w_3 \rangle$, let $X = X_f$, and recall that $\alpha = nL - \sum_{i=0}^3 w_i$. Since $X$ is well-formed, we have $\gcd\setspec{ w_i }{ i \in I } = 1$ for every subset $I$ of $\{0,1,2,3\}$ of cardinality $3$.
		
		That (a) implies (b) follows from the fact that $X$ has rational singularities. Assume (b) holds. Then Corollary \ref{prelimRationality} (b) implies that $\alpha \notin \Gamma$, so either $\alpha < 0$ or $\alpha \in \Nat \setminus \Gamma$. If $\alpha < 0$, (ii) holds. If $\alpha \in \Nat \setminus \Gamma$, Theorem A(ii) implies that $n=1$ and $| \{ w_0, w_1, w_2, w_3 \} | = 2$, so (i) holds. This proves that (b) implies (c). We show that (c) implies (a). If (i) holds, then $X$ is rational by Corollary \ref{rationalDim2}. If (ii) holds, then $X$ is rational by Corollary \ref{prelimRationality} (a). 
		
	\end{proof}
	
	\begin{remark}
		As discussed in Remark \ref{semigroupComments}, Proposition \ref{notUniruled} and Theorem \ref{hypersurfaceDegree} further motivate the study of numerical semigroups generated by well-formed tuples. 
	\end{remark}
	
	\begin{nothing}\label{ampleDiscussion}
		Let $X = X_f \subseteq \PPP(w_0, \dots, w_n)$ be a well-formed quasismooth hypersurface. We already noted that $X_f$ is a normal projective variety with cyclic quotient singularities.
		Since $X$ is Cohen-Macaulay, its canonical sheaf coincides with its dualizing sheaf, so Theorem 3.3.4 of \cite{dolgachev} implies that
		$\OSheaf_X(K_X) \isom \OSheaf_X(\alpha)$, where $\alpha$ is the amplitude of $X$. Note that the affine cone $C_X$ is a hypersurface of $\aff^n$ with only one singular point; so $C_X$ and its coordinate ring which we denote by $B$,  are normal. Since $X$ is well-formed, Proposition \ref{wellformedIFFSatCodim1} implies that $B$ is saturated in codimension one and Lemma \ref{BetterDemazure} implies that there exists an ample integral divisor $D \in \Div(X)$ such that $\OSheaf_X(nD) \isom \OSheaf_X(n)$ for all $n \in \Integ$. This implies that $\OSheaf_X(\alpha D) \isom \OSheaf_X(\alpha) \isom \OSheaf_X(K_X)$. It follows that $\lfloor \alpha D \rfloor$ is a canonical divisor of $X$; since $D$ is integral, $\lfloor \alpha D \rfloor = \alpha D$, so $\alpha D$ is a canonical divisor of $X$.
	\end{nothing}
	
	\begin{corollary}\label{ampleRational}
		Let $w_0,w_1,w_2,w_3 \in \Nat^+$ where $w_0 \leq w_1 \leq w_2 \leq w_3$ and let $L = \lcm(w_0,w_1,w_2,w_3)$.	Suppose $X_f \subset \PPP(w_0,w_1,w_2,w_3)$ is a well-formed quasismooth hypersurface of degree $nL$ for some $n \in \Nat^+$. The following are equivalent:
		\begin{enumerate}[\rm(a)]
			\item $X_f$ is rational and has ample canonical divisor;
			\item $n = 1$, $w_0 = w_1$, $w_2 = w_3$, $\gcd(w_0,w_2) = 1$, and $L > 2w_0 + 2w_2$.
		\end{enumerate}
	\end{corollary}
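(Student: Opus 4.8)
The plan is to decouple the two requirements in (a)---rationality and ampleness of the canonical divisor---and dispatch each using a result already in hand. The key observation is that ampleness of $K_{X_f}$ is controlled entirely by the sign of the amplitude $\alpha = nL - \sum_{i=0}^3 w_i$. Indeed, paragraph \ref{ampleDiscussion} produces an ample \emph{integral} divisor $D \in \Div(X_f)$ with $\OSheaf_{X_f}(mD) \isom \OSheaf_{X_f}(m)$ for all $m \in \Integ$, and shows that $\alpha D$ is a canonical divisor of $X_f$. Since $D$ is ample and $\alpha$ is an integer, I would note that $\alpha D$ is ample precisely when $\alpha > 0$: a positive integer multiple of an ample divisor is ample, whereas for $\alpha = 0$ the divisor is trivial and for $\alpha < 0$ it is anti-ample, neither being ample on the positive-dimensional variety $X_f$. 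Hence $K_{X_f}$ is ample if and only if $\alpha > 0$.

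With this reduction, the equivalence becomes bookkeeping layered on top of Theorem \ref{hypersurfaceDegree}. For (a) $\Rightarrow$ (b): rationality forces condition (i) or (ii) of Theorem \ref{hypersurfaceDegree}, but (ii) reads $\alpha < 0$, which is incompatible with $K_{X_f}$ being ample, i.e. with $\alpha > 0$. Thus (i) must hold, giving $n = 1$, $w_0 = w_1$, $w_2 = w_3$ and $\gcd(w_0,w_2) = 1$. Under (i) the amplitude simplifies to $\alpha = L - (2w_0 + 2w_2)$, so the condition $\alpha > 0$ reads $L > 2w_0 + 2w_2$, which is exactly the remaining clause of (b).

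For the converse (b) $\Rightarrow$ (a): the four equalities $n=1$, $w_0=w_1$, $w_2=w_3$, $\gcd(w_0,w_2)=1$ are precisely condition (i) of Theorem \ref{hypersurfaceDegree}, so $X_f$ is rational; and the strict inequality $L > 2w_0 + 2w_2$ yields $\alpha = L - (2w_0 + 2w_2) > 0$, whence $K_{X_f}$ is ample by the reduction above. I do not anticipate any genuine difficulty here: once the identity $K_{X_f} = \alpha D$ from \ref{ampleDiscussion} is invoked, the only point requiring care is the elementary but essential remark that ampleness of $\alpha D$ is equivalent to the strict positivity of $\alpha$, which relies on $D$ being simultaneously ample and integral (integrality being supplied by Lemma \ref{BetterDemazure}(b) via Proposition \ref{wellformedIFFSatCodim1}, since $X_f$ is well-formed).
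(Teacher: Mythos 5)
Your proposal is correct and follows essentially the same route as the paper: invoke \ref{ampleDiscussion} to write $K_{X_f} = \alpha D$ with $D$ ample and integral, observe that ampleness of $K_{X_f}$ is equivalent to $\alpha > 0$, and then let Theorem \ref{hypersurfaceDegree} handle the rationality half, with condition (ii) ruled out by $\alpha > 0$ and condition (i) combined with $\alpha = L - 2w_0 - 2w_2 > 0$ yielding (b). Your explicit justification that $\alpha D$ is ample if and only if $\alpha > 0$ (treating the cases $\alpha = 0$ and $\alpha < 0$) is a detail the paper leaves implicit, but the argument is the same.
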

	\begin{proof}
		Let $X = X_f$. By \ref{ampleDiscussion}, 
		there exists an ample integral divisor $D \in \Div(X)$ such that $K_X = \alpha D$ where $\alpha = nL - \sum_{i=0}^3 w_i$. Suppose (a) holds. Then $K_X = \alpha D$ is ample, so $\alpha = nL - \sum_{i=0}^3 w_i > 0$, so condition (ii) of Theorem \ref{hypersurfaceDegree} is not satisfied.
		Since $X$ is rational, condition (i) of Theorem \ref{hypersurfaceDegree} must be satisfied, so (b) holds. Conversely, if (b) holds then $X$ is rational by Theorem \ref{hypersurfaceDegree}, and $\alpha = L -2w_0 - 2w_2 > 0$,
		so $K_X = \alpha D$ is ample.
	\end{proof}

	\begin{example}\label{exampleAmple}
		For any $n \geq 2$, we can construct a normal rational projective $\Comp$-variety of dimension $n$ with quotient singularities and ample canonical divisor. Indeed, let $\bk = \Comp$ and choose some $f$ satisfying the assumptions of Proposition \ref{rationalAlldim} such that $X_f = \Proj (S / \lb f \rb) \subset \PPP(c, \dots, c, a, \dots, a)$ is well-formed and quasismooth, and such that the amplitude $\alpha = ac - kc - \ell a > 0$. By \ref{ampleDiscussion}, the weighted hypersurface $X_f$ is a normal projective variety with quotient singularities and ample canonical divisor and by Proposition \ref{rationalAlldim}, $X_f$ is rational. In particular, the variety $X_f = \Proj B_{3,3,7,7} \subset \PPP(7,7,3,3)$ is one such example.   By Theorem 8.1 in \cite{iano-fletcher_2000}, one can produce many such examples.  
	\end{example}
	
	\begin{remark}\label{KollarHypersurfaces}
		In Section 5 of \cite{Kollar2006}, Koll$\rm{\acute a}$r defines a family of weighted hypersurfaces $H(a_1, \dots, a_n) \subseteq \PPP(a_1, \dots, a_n)$, now known as \textit{Koll$\acute a$r hypersurfaces}; some of these (see Theorem 39 in \cite{Kollar2006}) are rational with ample canonical divisor. As far as we know, they represent the first family of rational weighted projective hypersurfaces with ample canonical divisor, with Example \ref{exampleAmple} providing another such family. (Currently, we only know of these two families with this property.) Note the special case of 2-dimensional Koll$\rm{\acute a}$r hypersurfaces is studied in detail in  \cite{urzua2018characterization}.     
	\end{remark}
	
	In view of Theorem \ref{hypersurfaceDegree} and Remark \ref{KollarHypersurfaces}, we note that we know of no examples of two-dimensional non-rational quasismooth weighted hypersurfaces $X$ such that $h^2(X, \OSheaf_X) = 0$. Hence we ask the following question: 
	
	\begin{question}
		Does there exist a non-rational two-dimensional well-formed quasismooth hypersurface $X$ such that $h^2(X, \OSheaf_{X}) = 0$? Theorem \ref{hypersurfaceDegree} implies that if such an $X$ exists and is a hypersurface, the degree of $f$ is strictly positive and not divisible by $L$.   
	\end{question}

	\section{The Rational Affine Pham-Brieskorn Threefolds}\label{RationalAffine}
	
	Let $B_{a_0,\dots, a_n} = \Comp[X_0,\dots ,X_n]/ \lb X_0^{a_0} + X_1^{a_1} + \dots + X_n^{a_n} \rb$. The ring $B_{a_0,\dots, a_n}$ is called a \textit{Pham-Brieskorn ring} and $ \Spec B_{a_0,\dots, a_n}$ is called an \textit{affine Pham-Brieskorn variety}. We apply the results of the previous sections to answer the following question of R.V. Gurjar:
	
	\begin{question}\label{rationalQuestion}
		For which 4-tuples $(a_0,a_1,a_2,a_3)$ is $\Spec B_{a_0, a_1,a_2,a_3}$ a $\Comp$-rational variety?
	\end{question}

	\begin{nothing}
		Let $n \geq 2$, $(a_0, \dots, a_n) \in (\Nat^+)^{n+1}$ and $f = X_0^{a_0} + X_1^{a_1} + \dots + X_n^{a_n} \in \Comp[X_0, \dots, X_n]$. Let $L = \lcm(a_0, \dots,a_n)$ and let $\deg(X_i) = w_i = L / a_i$ for each $i \in \{0, \dots, n\}$. Then $f$ is a homogeneous irreducible element of the graded ring $\Comp_{w_0, \dots, n_n}[X_0, \dots,  X_n]$, $B_{a_0,\dots,a_n} = \Comp_{w_0,\dots,w_n}[X_0, \dots,  X_n] / \lb f \rb$ is a graded ring and $\deg(x_i) = w_i$ for each $i \in \{0,\dots, n\}$ where $x_i \in B_{a_0,\dots, a_n}$ is the canonical image of $X_i$.
	\end{nothing}
	
	\begin{proposition}\label{SpecIFFProjRational}
		Let $B = B_{a_0,a_1,a_2,a_3}$. Then $\Spec B$ is rational over $\Comp$ if and only if $\Proj B$ is rational over $\Comp$. 
	\end{proposition}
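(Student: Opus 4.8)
The plan is to compare the two function fields directly and reduce everything to a single transcendence statement. Write $\bk = \Comp$. Since $f = X_0^{a_0}+X_1^{a_1}+X_2^{a_2}+X_3^{a_3}$ is irreducible, $B$ is a three-dimensional graded domain, so $\Spec B$ is a threefold with function field $\Frac(B)$, while $\Proj B$ is a surface whose function field is the degree-zero subfield
\[ B_{(0)} = \setspec{p/q}{p,q \in B \text{ homogeneous},\ \deg p = \deg q,\ q \neq 0} \subseteq \Frac(B). \]
Thus $\Spec B$ is rational if and only if $\Frac(B) = \bk^{(3)}$, and $\Proj B$ is rational if and only if $B_{(0)} = \bk^{(2)}$. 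The heart of the matter is to produce a single element $t \in \Frac(B)$, transcendental over $B_{(0)}$, with $\Frac(B) = B_{(0)}(t)$; geometrically this expresses that $\Spec B$ is birational to $\Proj B \times \aff^1$.

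To construct $t$, first I would set $e = e(B) = \gcd(w_0,w_1,w_2,w_3)$, noting that the degrees of nonzero homogeneous elements of $B$ are exactly the numerical semigroup $\lb w_0,\dots,w_3\rb$ (every monomial in the $x_i$ is nonzero in the domain $B$), whose generators have gcd $e$. Writing $e = \sum_i m_i w_i$ with $m_i \in \Integ$ and separating the positive from the negative exponents yields homogeneous $p,q \in B$ with $\deg p - \deg q = e$; I set $t = p/q$. Localizing $B$ at the multiplicative set $S$ of all nonzero homogeneous elements gives a $\Integ$-graded ring $S^{-1}B$ with $(S^{-1}B)_0 = B_{(0)}$ and support exactly $e\Integ$. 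Since any two homogeneous elements of the same degree differ by a factor in $B_{(0)}$, each piece $(S^{-1}B)_{ne}$ is the one-dimensional $B_{(0)}$-space spanned by $t^n$; hence $S^{-1}B = B_{(0)}[t,t^{-1}]$ and $\Frac(B) = \Frac(S^{-1}B) = B_{(0)}(t)$, with $t$ transcendental over $B_{(0)}$ because its powers lie in distinct graded pieces and so are $B_{(0)}$-linearly independent.

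With this identity available, the forward implication is formal: if $\Proj B$ is rational then $B_{(0)} = \bk^{(2)}$, whence $\Frac(B) = B_{(0)}(t) = \bk^{(2)}(t) = \bk^{(3)}$ and $\Spec B$ is rational. For the converse I would argue that if $\Spec B$ is rational then $\Frac(B) = \bk^{(3)}$, so the inclusion $B_{(0)} \subseteq \Frac(B) = \bk^{(3)}$ exhibits $\Proj B$ as a \emph{unirational} surface; since $\Proj B$ is a projective surface over $\Comp$, Castelnuovo's theorem upgrades unirationality to rationality. This last step is where the real content lies: the inclusion of function fields gives unirationality for free, and it is precisely Castelnuovo's theorem (applicable because $\dim \Proj B = 2$ and $\Char \bk = 0$) that closes the gap. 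Everything else is formal graded-ring algebra.
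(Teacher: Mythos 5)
Your proof is correct and takes essentially the same approach as the paper: both identify the function field of $\Spec B$ with $K^{(1)}$, where $K$ is the function field of $\Proj B$ (a fact the paper simply recalls and you prove in detail via the graded localization), observe that the implication from $\Proj B$ rational to $\Spec B$ rational is then formal, and settle the hard direction by noting that $K$ embeds in $\Comp^{(3)}$, hence is unirational, and invoking Castelnuovo's theorem. The only cosmetic difference is that the paper deduces unirationality of $K$ from stable rationality ($K^{(1)} \isom \Comp^{(3)}$), whereas you use the field inclusion $K \subseteq \Frac(B) \isom \Comp^{(3)}$ directly.
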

	\begin{proof}
		Let $K$ be the function field of $\Proj B$ and recall that the function field of $\Spec B$ is isomorphic to $K^{(1)}$. Assume $\Spec B$ is rational over $\Comp$. Then $K^{(1)} \isom \Comp^{(3)}$ and so the function field of $\Proj B$ is stably rational, hence unirational. By Castelnuovo's Theorem, $K$ is rational. The converse is clear.    
	\end{proof}
	
	\begin{definition}\label{cotypeDef}
		Given a tuple $S = (a_0, \dots, a_n) \in (\Nat^+)^{n+1}$, we define $S_i = (a_0, \dots,a_{i-1}, \hat{a}_i, a_{i+1}, \dots, a_n)$ and $L_i = \lcm S_i$. We define $\cotype(S) = |\setspec{i \in \{0, \dots, n\}}{L_i \neq L}|$. 
	\end{definition}
	
	\begin{nothing}\label{reduction}
		Let $B = B_{a_0, \dots, a_n}$ be Pham-Brieskorn ring. Lemma 3.1.14 in \cite{ChitayatThesis} shows that there exists a Pham-Brieskorn ring $B' = B_{b_0, \dots, b_n}$ such that $\cotype(b_0, \dots, b_n) = 0$ and $\Proj B \isom \Proj B'$. Moreover, given a Pham-Brieskorn ring $B$, it is straightforward to determine $B'$ (or equivalently the tuple $(b_0,\dots,b_n)$) explicitly; in particular, the proof of Lemma 3.1.14 in \cite{ChitayatThesis} shows that $b_i \leq a_i$ for all $i \in \{0, \dots, n\}$. This is demonstrated in Example \ref{redExample} below. Consequently, to answer Question \ref{rationalQuestion}, it suffices to consider the special case where $\cotype(a_0, a_1, a_2, a_3) = 0$.    
	\end{nothing}
	
	\begin{proposition}\cite[Proposition 2.4.25]{ChitayatThesis}\label{PBWellFormed}
		Let $f = X_0^{a_0} + \dots + X_n^{a_n}$ where $n \geq 2$ and $a_i \geq 1$ for all $i$. Then, the weighted hypersurface $X_f \subset \PPP = \PPP(w_0, \dots, w_n)$ is a well-formed quasismooth hypersurface if and only if $\cotype(a_0, \dots , a_n) = 0$.
	\end{proposition}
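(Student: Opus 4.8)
The plan is to check separately the three conditions packaged into ``well-formed quasismooth'': quasismoothness of $X_f$, well-formedness of the ambient space $\PPP = \PPP(w_0,\dots,w_n)$, and the codimension bound $\codim_{X_f}(X_f \cap \Sing\PPP) \ge 2$. I expect quasismoothness to hold unconditionally, well-formedness of $\PPP$ to be \emph{exactly} equivalent to $\cotype(a_0,\dots,a_n)=0$, and the codimension bound to follow for free once $\cotype = 0$. For quasismoothness, I would compute the gradient of the affine cone $C_{X_f} = V(f) \subseteq \aff^{n+1}$: since $\partial f / \partial X_i = a_i X_i^{a_i - 1}$ and $\Char \bk = 0$, these partials vanish simultaneously only at points where $X_i = 0$ for every $i$ with $a_i \ge 2$, while any index with $a_i = 1$ contributes a nonzero constant partial. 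In every case the common zero of the gradient meets $C_{X_f}$ only at the origin, so $C_{X_f}$ is smooth away from $0$ and $X_f$ is quasismooth regardless of the $a_i$.

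The heart of the argument is the translation between $\cotype$ and well-formedness. For a prime $p$ write $v_p(L) = \max_i v_p(a_i)$ and let $T_p = \setspec{i}{v_p(a_i) = v_p(L)}$ be the set of indices attaining the maximal $p$-adic valuation; note $T_p \neq \emptyset$. Since $w_i = L/a_i$ gives $v_p(w_i) = v_p(L) - v_p(a_i)$, we have $p \mid w_i \iff i \notin T_p$. I would first record that $\cotype(a_0,\dots,a_n) = 0 \iff |T_p| \ge 2$ for every prime $p$: the condition $L_i = L$ for all $i$ says exactly that deleting any single index leaves $\max_j v_p(a_j)$ unchanged for every $p$, i.e.\ that each maximal valuation is attained at least twice. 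Then I would show $\PPP$ is well-formed iff the same condition holds: $\gcd_{i \ne j} w_i = 1$ for all $j$ means that for every $j$ and every prime $p$ there is some $i \ne j$ with $p \nmid w_i$, that is, with $i \in T_p$; as $T_p$ is nonempty, this holds for all $j$ precisely when $|T_p| \ge 2$ (automatic when $p \nmid L$, since then $T_p$ is everything and $n \ge 2$). Combining the two equivalences yields $\PPP$ well-formed $\iff \cotype = 0$.

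For the codimension bound under the hypothesis $\cotype = 0$, I would use the description of the singular locus from the proof of Proposition \ref{wellformedIFFSatCodim1}, namely $\Sing\PPP = \bigcup_{p \mid w} V_+(\qgoth_p)$ with $\qgoth_p = \sum_{i \in T_p} R X_i$; thus $V_+(\qgoth_p)$ is the coordinate weighted subspace on the variables $\setspec{X_i}{i \notin T_p}$ and has dimension $n - |T_p|$. Restricting $f$ to this subspace annihilates every monomial $X_i^{a_i}$ with $i \in T_p$ and leaves $\sum_{i \notin T_p} X_i^{a_i}$, a nonzero nonconstant homogeneous polynomial whenever $V_+(\qgoth_p) \ne \emptyset$; cutting the irreducible variety $V_+(\qgoth_p)$ by such a polynomial lowers the dimension by exactly one, so $\dim(X_f \cap V_+(\qgoth_p)) = n - |T_p| - 1$. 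With $\cotype = 0$ we have $|T_p| \ge 2$, hence each such dimension is at most $n - 3 = \dim X_f - 2$, giving $\codim_{X_f}(X_f \cap \Sing\PPP) \ge 2$.

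Assembling the pieces: if $\cotype = 0$ then $\PPP$ is well-formed and the codimension bound holds, while quasismoothness is automatic, so $X_f$ is well-formed and quasismooth; conversely, well-formedness of $X_f$ includes well-formedness of $\PPP$ by definition, which forces $\cotype = 0$. The only genuinely delicate points, which I would spell out, are the degenerate cases in the codimension step: when $T_p$ is all of $\{0,\dots,n\}$ the subspace $V_+(\qgoth_p)$ is empty and contributes nothing (consistently, such $p$ do not divide $w$), and when exactly one index lies outside $T_p$ the restricted polynomial is a pure power $X_{i_0}^{a_{i_0}}$ cutting out the empty set in a single point, so the formula still reads $n - |T_p| - 1 = -1$. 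I expect this bookkeeping, rather than any conceptual difficulty, to be the main thing to handle carefully.
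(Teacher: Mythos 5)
Your argument is correct. Note that the paper itself does not prove this proposition --- it is quoted from the author's thesis (\cite[Proposition 2.4.25]{ChitayatThesis}) --- so there is no in-paper proof to compare against; judged on its own, your proof is sound and essentially the natural argument one would expect that reference to contain. The three-way split (quasismoothness unconditional via the Jacobian of the cone; well-formedness of $\PPP$ purely arithmetic; the codimension bound geometric) is clean, and the key bookkeeping identity $p \mid w_i \iff i \notin T_p$, together with the observation that both $\cotype(a_0,\dots,a_n)=0$ and well-formedness of $(w_0,\dots,w_n)$ are equivalent to $|T_p|\ge 2$ for every prime $p$, is exactly right. Two points deserve the care you gave them: first, the description $\Sing\PPP = \bigcup_{p \mid w} V_+(\qgoth_p)$ (used in the proof of Proposition \ref{wellformedIFFSatCodim1}) is only valid for well-formed $\PPP$, and you correctly invoke it only after $\cotype = 0$ has already forced $\PPP$ to be well-formed; second, the degenerate cases $|T_p| = n$ and $T_p = \{0,\dots,n\}$, which you dispose of correctly (for the codimension bound one only needs that the restriction of $f$ to $V_+(\qgoth_p)$ is not identically zero, so the dimension drops by at least one, which is all your inequality requires). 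The only unstated ingredient is the irreducibility of $f = X_0^{a_0} + \cdots + X_n^{a_n}$ for $n \ge 2$, which is implicit in calling $X_f$ a weighted hypersurface in the sense of \ref{weightedCompleteIntersection}; it follows, for instance, from quasismoothness itself, since a factorization $f = gh$ would force $V(g) \cap V(h)$, a subset of $\Sing C_{X_f}$ of dimension at least $n-1 \ge 1$, to contain points other than the origin.
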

	
	\begin{theorem}\label{PBTHM}
		Suppose $a_0 \leq a_1 \leq a_2 \leq a_3$ and  $\cotype(a_0,a_1,a_2,a_3) = 0$.  Then $\Proj B_{a_0,a_1,a_2,a_3}$ is rational if and only if one of the following holds:
		\begin{enumerate}[\rm(a)]
			\item $a_0 = a_1$, $a_2 = a_3$ and $\gcd(a_0,a_2) = 1$;
			\item $\frac{1}{a_0} + \frac{1}{a_1} + \frac{1}{a_2} + \frac{1}{a_3} > 1$.
		\end{enumerate}
	\end{theorem}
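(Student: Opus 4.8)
The strategy is to recognize $\Proj B_{a_0,a_1,a_2,a_3}$ as a weighted hypersurface of the kind handled by Theorem \ref{hypersurfaceDegree}, and then to translate that theorem's rationality criterion into the two conditions (a) and (b). Set $L = \lcm(a_0,a_1,a_2,a_3)$ and $w_i = L/a_i$; then $f = X_0^{a_0} + X_1^{a_1} + X_2^{a_2} + X_3^{a_3}$ is homogeneous of degree $L$ and $\Proj B_{a_0,a_1,a_2,a_3} = X_f \subset \PPP(w_0,w_1,w_2,w_3)$. Because $\cotype(a_0,a_1,a_2,a_3) = 0$, Proposition \ref{PBWellFormed} shows that $X_f$ is well-formed and quasismooth, so the only preliminary needed before invoking Theorem \ref{hypersurfaceDegree} is to exhibit $\deg f$ as a positive multiple of $L' := \lcm(w_0,w_1,w_2,w_3)$.

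First I would establish $L' \mid L$ by comparing $p$-adic valuations. For each prime $p$ one has $v_p(w_i) = v_p(L) - v_p(a_i)$, whence $v_p(L') = \max_i v_p(w_i) = v_p(L) - \min_i v_p(a_i)$ and therefore $v_p(L) - v_p(L') = \min_i v_p(a_i) \ge 0$. Thus $L' \mid L$; writing $n = L/L'$ the same computation gives $v_p(n) = \min_i v_p(a_i)$, that is, $n = \gcd(a_0,a_1,a_2,a_3)$. Since $\deg f = L = nL'$ with $n \in \Nat^+$, Theorem \ref{hypersurfaceDegree} applies (after listing the weights in nondecreasing order, which merely reverses them, as $a_0 \le a_1 \le a_2 \le a_3$ forces $w_0 \ge w_1 \ge w_2 \ge w_3$); it says that $X_f$ is rational exactly when condition (i) or condition (ii) of its part (c) holds.

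It then remains to identify (ii) with (b) and (i) with (a). The amplitude is $\alpha = nL' - \sum_i w_i = L - \sum_i w_i = L\bigl(1 - \sum_i \tfrac{1}{a_i}\bigr)$, and since $L > 0$ the inequality $\alpha < 0$ of (ii) is equivalent to $\sum_i \tfrac{1}{a_i} > 1$, which is precisely (b). For (i), note that with the weights in nondecreasing order the conditions ``the two smallest weights are equal'' and ``the two largest weights are equal'' translate through $w_i = L/a_i$ into $a_2 = a_3$ and $a_0 = a_1$. Assuming these, set $a = a_0 = a_1$, $b = a_2 = a_3$ and $d = \gcd(a,b)$; then $L = ab/d$, the two weights appearing in the coprimality clause of (i) equal $a/d$ and $b/d$, and these are automatically coprime. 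Hence (i) reduces to $a_0 = a_1$, $a_2 = a_3$ together with $n = 1$, and because $n = \gcd(a_0,a_1,a_2,a_3) = \gcd(a,b) = \gcd(a_0,a_2)$ under these equalities, the clause $n = 1$ is exactly $\gcd(a_0,a_2) = 1$. Thus (i) is equivalent to (a), completing the chain of equivalences.

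The conceptual content is modest once this dictionary is in place; the step demanding the most care is the analysis of condition (i). The key observation there is that the coprimality requirement $\gcd(w_0,w_2) = 1$ built into Theorem \ref{hypersurfaceDegree} is automatically satisfied as soon as the weight equalities hold, so that the genuine arithmetic constraint hidden inside (i) is the single condition $n = 1$; recognizing that $n = \gcd(a_0,a_1,a_2,a_3)$ then lets one match it cleanly with the coprimality $\gcd(a_0,a_2) = 1$ of (a). The only bookkeeping hazard is the reversal of the ordering of the weights relative to the ordering of the exponents.
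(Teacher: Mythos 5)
Your proposal is correct and follows essentially the same route as the paper: invoke Proposition \ref{PBWellFormed} to get well-formedness and quasismoothness from the cotype-zero hypothesis, apply Theorem \ref{hypersurfaceDegree}, and translate its conditions (i) and (ii) into (a) and (b). The paper dismisses the equivalence of (a) and (i) as ``easy to see,'' whereas you work it out explicitly --- in particular your identification $n = \gcd(a_0,a_1,a_2,a_3)$ via $p$-adic valuations, and the observation that the weight coprimality in (i) is automatic once the weight equalities hold, are exactly the details that verification requires.
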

	
	\begin{proof}
		Let $X =  \Proj B_{a_0,a_1,a_2,a_3}$. Let $L' = \lcm(a_0,a_1,a_2,a_3)$ and let $L = \lcm(w_0,w_1,w_2,w_3)$. Since $w_i = L' / a_i$ for each $i \in \{0,1,2,3\}$, we obtain that $L' = nL$ for some $n \in \Nat^+$. Proposition \ref{PBWellFormed} shows that $X$ is a well-formed quasismooth hypersurface of degree $nL$ so $X$ satisfies the hypotheses of Theorem \ref{hypersurfaceDegree}. It is easy to see that condition (a) is equivalent to condition (i) in Theorem \ref{hypersurfaceDegree}. Since  
		$$\frac{1}{a_0} + \frac{1}{a_1} + \frac{1}{a_2} + \frac{1}{a_3} > 1 \iff nL - \sum_{i = 0}^3 w_i < 0, $$ condition (b) is equivalent to condition (ii) in Theorem \ref{hypersurfaceDegree}. These two equivalences prove the theorem. 
		
	\end{proof}
	
	\begin{example}\label{redExample}
		We show that both $\Proj B_{3,7,14,87}$ and $\Spec B_{3,7,14,87}$ are rational. Note that $\cotype(3,7,14,87) = 2$ so we cannot directly apply Theorem \ref{PBTHM}. By Proposition 5.2 in \cite{Chitayat_Daigle_2019}, we find $\Proj B_{3,7,14,87} \isom \Proj B_{3,7,14,3} \isom \Proj B_{3,7,7,3} \isom \Proj B_{3,3,7,7}$. Since $\cotype(3,3,7,7) = 0$, $\Proj B_{3,3,7,7}$ is rational by Theorem \ref{PBTHM}. Consequently, $\Proj B_{3,7,14,87}$ is rational and hence $\Spec B_{3,7,14,87}$ is rational by Proposition \ref{SpecIFFProjRational}.   
	\end{example}
	
	We also obtain the following. Note there is no assumption on the cotype.  
	
	\begin{corollary}\label{ratSings}
		Let $B_{a_0, a_1, a_2, a_3}$ be a Pham-Brieskorn ring and consider the following statements:
		\begin{enumerate}[\rm(a)]
			\item $\Spec B_{a_0,a_1,a_2,a_3}$ has a rational singularity at the origin;
			\item $\frac{1}{a_0} + \frac{1}{a_1} + \frac{1}{a_2} + \frac{1}{a_3} > 1$;       
			\item $\Proj B_{a_0,a_1,a_2,a_3}$ and $\Spec B_{a_0,a_1,a_2,a_3}$ are rational.
		\end{enumerate}
		
		Then $(a) \iff (b) \Rightarrow (c)$. 
		
	\end{corollary}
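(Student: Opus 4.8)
The plan is to establish $(a)\Leftrightarrow(b)$ directly on the affine cone, and to obtain $(b)\Rightarrow(c)$ from Theorem~\ref{PBTHM} through the cotype reduction of \ref{reduction}. Write $B=B_{a_0,a_1,a_2,a_3}$ and $f=X_0^{a_0}+\dots+X_3^{a_3}$, let $L=\lcm(a_0,a_1,a_2,a_3)$ and $w_i=L/a_i$, so that $B$ is the graded hypersurface ring with $\deg X_i=w_i$ and $\deg f=L$. All three conditions are symmetric in the $a_i$, so I may reorder them freely. Set $\alpha=L-\sum_{i=0}^3 w_i=L\bigl(1-\sum_{i=0}^3 1/a_i\bigr)$, the $a$-invariant of $B$ (equivalently the amplitude of $X_f$); note that $\alpha<0$ is exactly condition (b).

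For $(b)\Rightarrow(c)$ I would argue as follows. By \ref{reduction} there is a Pham-Brieskorn ring $B'=B_{b_0,b_1,b_2,b_3}$ with $\cotype(b_0,b_1,b_2,b_3)=0$, $\Proj B\isom\Proj B'$, and $b_i\le a_i$ for every $i$. From $b_i\le a_i$ we get $\sum_i 1/b_i\ge\sum_i 1/a_i>1$, so after reordering the $b_i$ increasingly, condition (b) of Theorem~\ref{PBTHM} holds for $B'$; hence $\Proj B'$, and therefore $\Proj B$, is rational. Proposition~\ref{SpecIFFProjRational} then yields that $\Spec B$ is rational as well, proving (c).

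For $(a)\Leftrightarrow(b)$ I would first dispose of the degenerate case: if some $a_i=1$ then $B$ is a polynomial ring, $\Spec B$ is smooth (so (a) holds), and $\sum 1/a_i>1$ (so (b) holds). Assume then $a_i\ge2$ for all $i$. The Jacobian ideal of $f$ is, up to units, $(X_0^{a_0-1},\dots,X_3^{a_3-1})$, whose only common zero on $\{f=0\}$ is the origin; thus $\Spec B$ has an isolated singularity at the origin and $X_f$ is quasismooth. Consequently $Y=\Proj B$ is a normal projective surface with at worst quotient (hence rational) singularities, independently of the cotype, while $B$, being a hypersurface, is Gorenstein and Cohen--Macaulay. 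I would then apply the standard criterion for rational singularities of cones: since $Y$ has rational singularities, the vertex of $\Spec B$ is a rational singularity if and only if $H^i(Y,\OSheaf_Y(m))=0$ for all $i>0$ and all $m\ge0$. The intermediate groups ($0<i<\dim Y$) vanish automatically because $B$ is Cohen--Macaulay, and graded local duality on the Gorenstein ring $B$ identifies the (non)vanishing of the top group $H^{\dim Y}(Y,\OSheaf_Y(m))$ with that of $B_{\alpha-m}$ (this is exactly Lemma~\ref{FletcherCohomology} in the cotype-$0$ case). Since $B_0\ne0$, the top group vanishes for every $m\ge0$ precisely when $\alpha<0$; hence the vertex is rational iff $\alpha<0$, i.e. iff (b), giving $(a)\Leftrightarrow(b)$.

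The main obstacle is the non-well-formed part of $(a)\Leftrightarrow(b)$. One cannot route the singularity statement through the cotype reduction, since that reduction preserves $\Proj B$ but not $\Spec B$; indeed $\alpha$ can change sign under it, so rationality of the singularity is not preserved. Thus the cohomological computation must be carried out for the given $B$ without assuming well-formedness, and the crux is the cone criterion together with the local-duality computation of $H^{\dim Y}(Y,\OSheaf_Y(m))$, which I would either cite or derive from the exact sequence relating $H^\bullet_{\mgoth}(B)$ to $\bigoplus_m H^{\bullet-1}(Y,\OSheaf_Y(m))$. Everything else is routine, and it is worth noting that neither $(c)\Rightarrow(b)$ nor $(c)\Rightarrow(a)$ is claimed: for $B_{4,4,5,5}$ one has $\Proj B$ rational by Theorem~\ref{PBTHM}(a), yet $\sum 1/a_i=9/10<1$, so (c) holds while (a) and (b) fail.
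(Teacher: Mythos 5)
Your proof of $(b)\Rightarrow(c)$ is exactly the paper's argument: invoke the cotype reduction of \ref{reduction} to get $B'=B_{b_0,\dots,b_3}$ with $\cotype=0$, $\Proj B\isom\Proj B'$ and $b_i\le a_i$, observe that the reciprocal sum only increases, apply Theorem \ref{PBTHM}(b), and transfer to $\Spec B$ by Proposition \ref{SpecIFFProjRational}. Where you genuinely diverge is $(a)\Leftrightarrow(b)$: the paper simply cites this equivalence (Example 2.21 of \cite{flenner2003rational}) and does not prove it, whereas you give a direct argument — quasismoothness of the cone from the Jacobian ideal $(X_0^{a_0-1},\dots,X_3^{a_3-1})$, hence quotient (so rational) singularities on $Y=\Proj B$; the Flenner--Watanabe criterion reducing rationality of the vertex to vanishing of $H^i(Y,\OSheaf_Y(m))$ for $i>0$, $m\ge0$; Cohen--Macaulayness killing the intermediate group; and graded local duality identifying the top group with $(B_{\alpha-m})^{\vee}$, so that vanishing for all $m\ge0$ is exactly $\alpha<0$. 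This is correct, and you are right about the key subtlety: the reduction \ref{reduction} preserves $\Proj$ but not $\Spec$, so the equivalence $(a)\Leftrightarrow(b)$ cannot be routed through the cotype-$0$ case, and Lemma \ref{FletcherCohomology} cannot be quoted directly since it assumes well-formedness — your local-duality computation sidesteps that assumption, which is precisely why the cited result holds in full generality. What each approach buys: the paper's citation keeps the corollary's proof to three lines; yours is self-contained modulo the cone criterion (which you should still cite — Flenner or Watanabe — as it does not follow from the local cohomology exact sequence alone) and makes visible why no well-formedness hypothesis is needed. Your closing example $B_{4,4,5,5}$, rational by Theorem \ref{PBTHM}(a) but with $\sum 1/a_i=9/10<1$, is a correct confirmation that $(c)\not\Rightarrow(b)$, a point the paper leaves implicit.
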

	\begin{proof}
		
		The equivalence of (a) and (b) is stated in Example 2.21 of \cite{flenner2003rational} so it suffices to prove (b) implies (c). By \ref{reduction}, there exists a tuple $(b_0, b_1, b_2, b_3)$ such that $\Proj B_{a_0,a_1,a_2,a_3} \isom \Proj B_{b_0,b_1,b_2,b_3}$, $\cotype(b_0,b_1,b_2,b_3) = 0$ and $b_i \leq a_i$ for all $i \in \{0,1,2,3\}$. Then $\frac{1}{b_0} + \frac{1}{b_1} + \frac{1}{b_2} + \frac{1}{b_3} \geq \frac{1}{a_0} + \frac{1}{a_1} + \frac{1}{a_2} + \frac{1}{a_3} > 1$. By Theorem \ref{PBTHM} (b), $\Proj B_{b_0,b_1,b_2,b_3}$ is rational and so $\Proj B_{a_0,a_1,a_2,a_3}$ and $\Spec B_{a_0,a_1,a_2,a_3}$ are rational as well.  
	\end{proof}
	
	\section{Proof of Theorem A}\label{Sec:Numerical}
	
	This section provides the proof of Theorem A. Recall the definition of a well-formed tuple from Definition \ref{wellformedtuple}.
	
	\begin{theoremA}\label{TheoremA}
		Suppose $(d_1,d_2,d_3,d_4) \in (\Nat^+)^4$ is well-formed, let $\Gamma = \lb d_1, d_2, d_3, d_4 \rb$ and let $L = \lcm(d_1, d_2, d_3, d_4)$. 
		\begin{enumerate}[\rm(i)]
			\item If $N \geq \max\{ 2L-\sum_{m=1}^4 d_m, 0 \}$ then $N \in \Gamma$. 
			\item  If $n \in \Nat^+$ and $n \cdot L - \sum_{i=1}^4 d_i \in \Nat \setminus \Gamma$, then $n=1$ and $| \{ d_1, d_2 , d_3 , d_4 \} | = 2$.
			
		\end{enumerate}
		
	\end{theoremA} 
	
	The proof of Theorem A is given in  paragraphs \ref{BoundsFrobNbr}--\ref{EndOfProof}. We preserve the following notation until the end of proof. 
	
	\begin{notation}
		Given $d_1, d_2,d_3,d_4 \in \Nat^+$, we define $L = \lcm(d_1,d_2,d_3,d_4)$ and $\Gamma = \lb d_1,d_2,d_3,d_4 \rb$. Given $i,j \in \{1,2,3,4\}$, we define $L_{i,j} = \lcm(d_i,d_j)$ and $g_{i,j}=\gcd(d_i,d_j)$. 
	\end{notation}
	
	\begin{proposition}  \label {BoundsFrobNbr}
		\mbox{ }
		\begin{enumerate}[\rm(a)]
			
			\item \label{hgg2383rynvide} Let $d_1,d_2 \in \Nat^+$ and $n \in \Nat$.
			If $n > L_{1,2} - d_1 - d_2$ and $g_{1,2} \mid n$ then $n \in \langle d_1, d_2 \rangle$.
			
			\item \label {twobounds3}  If $d_1 , d_2 , d_3 \in \Nat^+$ are relatively prime then $F(d_1,d_2,d_3) \le L_{1,2} + L_{1,3} - d_1 -d_2 -d_3$.
			
		\end{enumerate}
	\end{proposition}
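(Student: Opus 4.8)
For part (a), the plan is to reduce to the classical two-generator situation. Write $g = g_{1,2}$ and $d_1 = g e_1$, $d_2 = g e_2$ with $\gcd(e_1,e_2)=1$, so that $L_{1,2} = g e_1 e_2$. Since $g \mid n$, I would put $n = gm$; the hypothesis $n > L_{1,2}-d_1-d_2$ then reads $m > e_1 e_2 - e_1 - e_2 = F(e_1,e_2)$, the last equality being Sylvester's formula for two coprime generators. Hence $m \in \langle e_1, e_2\rangle$, say $m = y_1 e_1 + y_2 e_2$ with $y_i \in \Nat$, and multiplying by $g$ gives $n = y_1 d_1 + y_2 d_2 \in \langle d_1, d_2\rangle$. (The degenerate cases $e_i = 1$ cause no trouble, since there $F(e_1,e_2) = -1$ and the semigroup is all of $\Nat$.)

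For part (b), I would show that every integer $N > L_{1,2}+L_{1,3}-d_1-d_2-d_3$ lies in $\Gamma = \langle d_1,d_2,d_3\rangle$, which is exactly the asserted bound on $F$. The idea is to pin down the residue of $N$ modulo $d_1$ using only $d_2$ and $d_3$. Set $p = d_1/g_{1,2}$ and $q = d_1/g_{1,3}$ and consider the group homomorphism $\phi : \Integ/p\Integ \times \Integ/q\Integ \to \Integ/d_1\Integ$ determined by $\phi(1,0) = d_2 \bmod d_1$ and $\phi(0,1) = d_3 \bmod d_1$; it is well defined because $p d_2 = d_1(d_2/g_{1,2})$ and $q d_3 = d_1(d_3/g_{1,3})$ are multiples of $d_1$. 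Its image is the subgroup of $\Integ/d_1\Integ$ generated by $d_2$ and $d_3$, which is everything precisely because $\gcd(d_1,d_2,d_3)=1$. Thus $\phi$ is surjective, so the residue $N \bmod d_1$ can be written as $x_2 d_2 + x_3 d_3 \bmod d_1$ with $0 \le x_2 \le p-1$ and $0 \le x_3 \le q-1$.

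With such a representative fixed, I would estimate $x_2 d_2 + x_3 d_3 \le (p-1)d_2 + (q-1)d_3 = L_{1,2}+L_{1,3}-d_2-d_3$, using the identities $pd_2 = \lcm(d_1,d_2) = L_{1,2}$ and $qd_3 = \lcm(d_1,d_3) = L_{1,3}$. Consequently $N - x_2 d_2 - x_3 d_3$ is divisible by $d_1$ and satisfies $N - x_2 d_2 - x_3 d_3 > (L_{1,2}+L_{1,3}-d_1-d_2-d_3) - (L_{1,2}+L_{1,3}-d_2-d_3) = -d_1$. A multiple of $d_1$ that exceeds $-d_1$ is non-negative, so $N - x_2 d_2 - x_3 d_3 = x_1 d_1$ for some $x_1 \in \Nat$, and therefore $N = x_1 d_1 + x_2 d_2 + x_3 d_3 \in \Gamma$.

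The routine parts are the Sylvester reduction in (a) and the lcm bookkeeping in (b). The conceptual heart, and the step I would be most careful about, is the surjectivity of $\phi$: that the \emph{box} of coefficients $0 \le x_2 < p$, $0 \le x_3 < q$ already realizes every residue class modulo $d_1$. This is where the joint coprimality hypothesis enters decisively, as it is exactly what makes $d_2,d_3$ generate $\Integ/d_1\Integ$, and it is what converts the a priori global statement about $F$ into a purely local-at-$d_1$ covering problem. An alternative route for (b) is to invoke part (a) directly: solve $x_3 d_3 \equiv N \pmod{g_{1,2}}$ for a small $x_3$ (possible since $\gcd(d_3,g_{1,2}) = \gcd(d_1,d_2,d_3)=1$) and then apply part (a) to $N - x_3 d_3$. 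That variant, however, forces one to split off the degenerate cases where one of $d_1,d_2$ divides the other in order to guarantee $N - x_3 d_3 \ge 0$, so I would prefer the self-contained congruence argument above.
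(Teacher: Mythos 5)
Your proposal is correct; part (a) matches the paper, while part (b) takes a genuinely different route. For (a) the paper simply says the coprime case is a well-known result of Frobenius and that ``the general case follows'' --- which is precisely the scaling $d_i = g e_i$, $n = gm$ that you write out, so there is no real difference. For (b), however, the paper does not argue from scratch: it quotes the $k=3$ case of a bound of Brauer, namely $F(d_1,d_2,d_3) \le L_{1,2} + g_{1,2}d_3 - d_1 - d_2 - d_3$, and then upgrades this to the stated inequality by noting that $\gcd(g_{1,2},g_{1,3})=1$ forces $g_{1,2}g_{1,3} \mid d_1$, whence $g_{1,2}d_3$ divides $\tfrac{d_1 d_3}{g_{1,3}} = L_{1,3}$ and so $g_{1,2}d_3 \le L_{1,3}$. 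You instead prove the stated bound directly: your surjectivity claim for $\phi$ --- that the box of coefficients $0 \le x_2 < d_1/g_{1,2}$, $0 \le x_3 < d_1/g_{1,3}$ already realizes every residue class modulo $d_1$, which holds because the image of $\phi$ is the subgroup of $\Integ/d_1\Integ$ generated by $d_2$ and $d_3$ and $\gcd(d_1,d_2,d_3)=1$ --- is exactly the Ap\'ery-set mechanism that underlies Brauer-type bounds, so in effect you reprove the needed case of Brauer's theorem in a symmetrized form; the arithmetic that follows (the estimate $x_2 d_2 + x_3 d_3 \le L_{1,2}+L_{1,3}-d_2-d_3$ and the ``multiple of $d_1$ exceeding $-d_1$ is nonnegative'' step) is sound. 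What the paper's route buys is brevity, at the price of an external citation, and its intermediate bound $L_{1,2} + g_{1,2}d_3 - \sum_i d_i$ is actually sharper than the symmetric one, as the paper's own sharpening step shows; what your route buys is a self-contained elementary proof that makes visible exactly where the joint coprimality hypothesis enters.
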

	
	\begin{proof}
		The case $g_{1,2}=1$ of (a) is a well-known result of Frobenius; the general case follows.
		For (b), note that  $F(d_1,d_2,d_3) \leq L_{1,2} + g_{1,2}d_3 - d_1 -d_2 -d_3$
		by the $k=3$ case of statement (6) in the introduction of \cite{Brauer1942}.
		Since $\gcd(g_{1,2}, g_{1,3})=1$, we get $g_{1,2} g_{1,3} \mid d_1$, implying that $g_{1,2} d_3$ divides $\frac{d_1d_3}{g_{1,3}} = L_{1,3}$;
		so $g_{1,2} d_3 \le L_{1,3}$ and (b) follows.
	\end{proof}
	
	\begin{assumption}\label {7823yrefj9ronf}
		We assume from this point onward that $d_1,d_2,d_3,d_4 \in \Nat^+$ and $(d_1, d_2, d_3, d_4)$ is well-formed. 
	\end{assumption}
	
	\begin{lemma}  \label {iu7q623ruyehfj}
		If $i,j,k,l$ are such that $\{i,j,k,l\} = \{1,2,3,4\}$, then $g_{k,l} \mid \frac L{L_{i,j}}$.
	\end{lemma}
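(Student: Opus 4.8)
The plan is to verify the divisibility one prime at a time by passing to $p$-adic valuations. Fixing a prime $p$ and writing $e_m = v_p(d_m)$ for $m \in \{1,2,3,4\}$, the identities $v_p(g_{k,l}) = \min(e_k,e_l)$, $v_p(L_{i,j}) = \max(e_i,e_j)$ and $v_p(L) = \max(e_1,e_2,e_3,e_4)$ turn the claim $g_{k,l} \mid L/L_{i,j}$ into the assertion that
\[
\min(e_k,e_l) + \max(e_i,e_j) \le \max(e_1,e_2,e_3,e_4)
\]
holds for every $p$. (Note that $L_{i,j} \mid L$, so $L/L_{i,j}$ is indeed an integer, and the divisibility is equivalent to $v_p(g_{k,l}) + v_p(L_{i,j}) \le v_p(L)$ for all $p$.)

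The step that does the real work is extracting the right combinatorial consequence of well-formedness: for each prime $p$, at most two of $e_1,e_2,e_3,e_4$ are positive. This is immediate from Assumption \ref{7823yrefj9ronf}, since well-formedness says that the $\gcd$ of any three of the $d_m$ equals $1$, i.e.\ among any three of the $e_m$ at least one vanishes, which is exactly the statement that at most two of the four are positive.

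Granting this, the inequality follows from a two-case split keyed to the partition $\{i,j\}\sqcup\{k,l\}=\{1,2,3,4\}$. If $\min(e_k,e_l)=0$, then the left-hand side is just $\max(e_i,e_j)\le\max(e_1,e_2,e_3,e_4)$ and we are done. If instead $\min(e_k,e_l)>0$, then both $e_k$ and $e_l$ are positive, so by the ``at most two positive'' fact the remaining valuations $e_i,e_j$ must both vanish; hence $\max(e_i,e_j)=0$ and the left-hand side equals $\min(e_k,e_l)\le\max(e_k,e_l)\le\max(e_1,e_2,e_3,e_4)$. I do not expect a genuine obstacle here: the only point requiring care is recording the combinatorial consequence of well-formedness cleanly, after which the valuation inequality reduces to this one-line case distinction. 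I would state the valuation reformulation first so that the role of the complementary pairs $\{i,j\}$ and $\{k,l\}$ is transparent.
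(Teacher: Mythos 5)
Your proof is correct, and it ultimately rests on the same consequence of well-formedness as the paper's, but the two arguments run through different machinery. The paper argues globally with gcd's: well-formedness gives $\gcd(g_{k,l},d_i)=1=\gcd(g_{k,l},d_j)$, hence $g_{k,l}$ is relatively prime to $L_{i,j}$; since $g_{k,l}$ divides $d_k$ and therefore divides $L = (L/L_{i,j})\,L_{i,j}$, Euclid's lemma yields $g_{k,l}\mid L/L_{i,j}$. You instead localize at each prime $p$: the divisibility becomes the inequality $\min(e_k,e_l)+\max(e_i,e_j)\le\max(e_1,e_2,e_3,e_4)$, and well-formedness is recast as the combinatorial statement that at most two of $e_1,e_2,e_3,e_4$ are positive, after which your two-case split ($\min(e_k,e_l)=0$, or else $e_i=e_j=0$) finishes immediately. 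What your version buys is self-containedness: you never need the facts that coprimality to $d_i$ and to $d_j$ implies coprimality to their lcm, or that dividing $bc$ while coprime to $b$ implies dividing $c$ --- both are absorbed into transparent inequalities on exponents. What the paper's version buys is brevity (three lines) and reuse of the coprimality formulation of well-formedness, which is the form in which that hypothesis is invoked repeatedly elsewhere in Section \ref{Sec:Numerical}. Both proofs are complete and correct.
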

	
	\begin{proof}
		Let $m = L/L_{i,j}$.
		Since $g_{k,l}$ is relatively prime to $d_i$ and also to $d_j$, it is relatively prime to $L_{i,j}$.
		Since $g_{k,l}$ divides $L = m L_{i,j}$ and is relatively prime to $L_{i,j}$, we obtain $g_{k,l} \mid m$, as desired.
	\end{proof}

	\noindent \textbf{Proof of Theorem A(i).} We may assume that $d_1 \le d_2 \le d_3 \le d_4$.  We may also assume that $d_1 \geq 2$, otherwise the result is trivial. 
	
	Consider first the case where $L_{1,3} = L$.
	Since $d_2$ and $d_4$ divide $L_{1,3}$ and (by Lemma \ref{iu7q623ruyehfj}) $g_{2,4} = 1$, we get $d_2 d_4 \mid L_{1,3} \mid d_1 d_3$,
	so $d_2 d_4 \le d_1 d_3$. Since $d_1 \leq d_2$ and $d_3 \leq d_4$ it follows that $d_1=d_2$ and $d_3=d_4$.
	Note that $d_1 d_3-d_1-d_3 > 0$, because $d_1,d_3\ge2$ and $g_{1,3}= \gcd(d_1,d_2,d_3,d_4)=1$.
	We have $N \geq 2L - \sum_{m=1}^4 d_m = 2(d_1 d_3-d_1-d_3)> d_1 d_3-d_1-d_3$,
	so Proposition \ref{BoundsFrobNbr}(a) gives $N \in \Gamma$, as desired.
	
	Next, consider the case where $L_{1,3} \neq L$.
	Proposition \ref{BoundsFrobNbr}(b) gives $F(d_1,d_3,d_4) \leq L_{1,3} + L_{1,4} - d_1 - d_3 - d_4$, so
	\begin{align*}
		\textstyle
		N - F(d_1,d_3,d_4)
		& \textstyle  \ge 2L - \sum_{m=1}^4 d_m -  L_{1,3} - L_{1,4} + d_1 + d_3 + d_4  \\
		&= (L - L_{1,3}) + (L - L_{1,4}) - d_2 .
	\end{align*}
	We have $(L - L_{1,3}) + (L - L_{1,4}) \ge L - L_{1,3} = L_{1,3} ( \frac L{L_{1,3}} - 1 ) \ge L_{1,3}$, so 
	\begin{equation} \label {8ctxo98b763}
		\textstyle
		N - F( d_1,d_3,d_4) \ge L_{1,3} - d_2.
	\end{equation}
	We claim that  $L_{1,3} - d_2 > 0$.
	Indeed,  $L_{1,3} - d_2 \ge d_3 - d_2 \ge 0$; if  $L_{1,3} - d_2 = 0$ then $L_{1,3} = d_2 = d_3$,
	so $d_1 \mid d_2$ and $d_1 \mid d_3$, so $\gcd(d_1, d_2, d_3) \geq d_1 >1$, a contradiction.
	This shows that $L_{1,3} - d_2 > 0$, so \eqref{8ctxo98b763} gives $N > F(d_1,d_3,d_4) \geq F(\Gamma)$ as required.
	\hfill $\square$

	\begin{assumption} \label {jchfp2o3bew9}
		Let $\alpha = L - \sum_{i=1}^4 d_i$ and assume that $\alpha \in \Nat \setminus \Gamma$.
	\end{assumption}

	\begin{remark}  \label {9823cb9i2390jc}
		In order to finish the proof of Theorem A,
		it suffices to prove that Assumption \ref{jchfp2o3bew9} implies that $| \{ d_1, d_2 , d_3 , d_4 \} | = 2$.
		(This follows from part (i) of Theorem A.)
		Assumption \ref{jchfp2o3bew9} remains in effect until the end of the proof of Theorem A.
	\end{remark}

	\begin{nothing}  \label {hrj5687q32hnn}
		Assumptions \ref{jchfp2o3bew9} immediately imply:
		\begin{enumerate}[\rm(a)]
			
			\item $\alpha > 0$
			
			\item $\min(d_1, d_2, d_3, d_4) \ge 2$
			
			\item $\sum_{i=1}^4 d_i < L$, so in particular $L \notin \{d_1,d_2,d_3,d_4\}$.
			
			\item $| \{ d_1, d_2, d_3, d_4 \} | \ge 2$
			
			\item If $i,j,k \in \{1,2,3,4\}$ are distinct, then $\gcd(g_{i,j}, d_k) = 1$. 
			
		\end{enumerate}
	\end{nothing}

	\begin{lemma}  \label {j68Sh256euj149rilseuay}
		If there exist distinct $i,j \in \{1,2,3,4\}$ such that $d_i \mid d_j$, then $| \{d_1,d_2,d_3,d_4\} | = 2$.
	\end{lemma}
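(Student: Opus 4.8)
The plan is to use the hypothesised divisibility to collapse $\Gamma$ to a two- or three-generator semigroup, and then play the Frobenius estimates of Proposition \ref{BoundsFrobNbr} against a lower bound for $L$ supplied by Lemma \ref{iu7q623ruyehfj}. After relabelling we may assume the relation is $d_1 \mid d_2$. Well-formedness (equivalently \ref{hrj5687q32hnn}(e)) then gives $\gcd(d_1,d_3)=\gcd(d_1,d_4)=1$, and since $d_1\mid d_2$ the generator $d_2$ is redundant, so $\Gamma=\langle d_1,d_3,d_4\rangle$; in particular $\Gamma$ contains every multiple of $d_1$. I would argue by contradiction, assuming $|\{d_1,d_2,d_3,d_4\}|\ge 3$ and aiming to contradict the standing Assumption \ref{jchfp2o3bew9} that $\alpha=L-\sum_i d_i$ lies in $\Nat\setminus\Gamma$.

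The engine is a pair of inequalities. On one hand $\gcd(d_1,d_3,d_4)=1$, so Proposition \ref{BoundsFrobNbr}(b) bounds $F(\Gamma)=F(d_1,d_3,d_4)$; choosing the distinguished generator judiciously (expanding about $d_3$ rather than $d_1$ sharpens the estimate to $F(\Gamma)\le d_1 d_3+\lcm(d_3,d_4)-d_1-d_3-d_4$). Since $\alpha\ge 0$ by \ref{hrj5687q32hnn}(a) and $\alpha\notin\Gamma$, we must have $\alpha\le F(\Gamma)$. On the other hand, Lemma \ref{iu7q623ruyehfj} applied to $\{i,j,k,l\}=\{3,4,1,2\}$ gives $d_1\mid L/\lcm(d_3,d_4)$, so $L\ge d_1\lcm(d_3,d_4)$; and \ref{hrj5687q32hnn}(c) gives $L\notin\{d_1,\dots,d_4\}$, whence $d_2\le L/2$. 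Feeding these into $\alpha\le F(\Gamma)$ and eliminating $L$ and $d_2$ yields $\lcm(d_3,d_4)\le 2(d_3+d_4)$ together with an explicit bound on the ratio $\max(d_3,d_4)/\min(d_3,d_4)$.

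Writing $d_3=ha$, $d_4=hb$ with $h=\gcd(d_3,d_4)$ and $\gcd(a,b)=1$, the inequality $\lcm(d_3,d_4)\le 2(d_3+d_4)$ becomes $ab\le 2(a+b)$. Combined with the ratio bound and with the parity forced by $\gcd(d_1,d_3)=\gcd(d_1,d_4)=1$, this leaves only two possibilities: either $\min(a,b)=1$, i.e. a second divisibility (after relabelling $d_3\mid d_4$); or finitely many exceptional pairs $(a,b)$. Each exceptional pair I would dispatch directly: the inequalities pin $L$ to a small multiple of $\lcm(d_3,d_4)$, so $\alpha$ is determined up to a bounded parameter, and one checks that the resulting nonnegative $\alpha$ already lies in $\Gamma$ (for instance it is forced to be even when $d_1=2$, and $2\in\Gamma$; or it is reachable from a small coprime pair such as $\langle 2,3\rangle$), contradicting $\alpha\in\Nat\setminus\Gamma$.

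In the remaining case $d_3\mid d_4$ the semigroup collapses further to the two coprime generators $\Gamma=\langle d_1,d_3\rangle$, and writing $d_2=d_1 e$, $d_4=d_3 f$ one computes $L=\lcm(d_2,d_4)=d_1 d_3\lcm(e,f)$. Now the clean identity $F(d_1,d_3)=d_1 d_3-d_1-d_3$ turns $\alpha\le F(\Gamma)$ into $d_1 d_3(\lcm(e,f)-1)\le d_1 e+d_3 f$, which bounds $\lcm(e,f)\le 6$; the only solution compatible with well-formedness and with $\alpha\in\Nat\setminus\Gamma$ is $e=f=1$, that is $d_2=d_1$ and $d_4=d_3$, so that $\{d_1,d_2,d_3,d_4\}$ has exactly two elements. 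The main obstacle is precisely this terminal bookkeeping: verifying, for the finitely many exceptional ratios and the finitely many pairs $(e,f)$, that a nonnegative amplitude cannot escape $\Gamma$. The recurring mechanism that makes every such case close is that once the generators are small and coprime they cover a cofinite subset of $\Nat$ (indeed a full set of residues modulo the smallest generator), so the nonnegative $\alpha$ is trapped inside $\Gamma$.
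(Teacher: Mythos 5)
Your proposal takes a genuinely different route from the paper's, and its skeleton is sound. The paper never discards the redundant generator: keeping the labelling $d_3 \mid d_4$, it applies the \emph{two}-generator bound of Proposition \ref{BoundsFrobNbr}(a) to $\langle d_1, d_2\rangle$ (the pair \emph{not} involved in the divisibility), after first subtracting from $\alpha$ a multiple $cd_3$ chosen so that $g_{1,2} \mid \alpha - cd_3$; this modular trick yields one master inequality, which is then dissected by cases on $s = d_4/d_3$ and $g_{1,2}$. You instead delete the redundant generator, so $\Gamma = \langle d_1, d_3, d_4\rangle$ with $d_1$ coprime to $d_3, d_4$, apply the \emph{three}-generator bound \ref{BoundsFrobNbr}(b) to $\alpha$ directly, and control $d_2$ and $L$ by the cruder estimates $d_2 \le L/2$ and $L \ge d_1\lcm(d_3,d_4)$ (the latter correctly extracted from Lemma \ref{iu7q623ruyehfj}, since $g_{1,2} = d_1$). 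I checked that your two numerical consequences are both derivable and that your endgame does close, so the strategy is completable; what each approach buys is that yours has a more transparent main inequality, while the paper's modular trick builds the residue information into the inequality from the start instead of re-introducing it case by case at the end.

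Two caveats, one minor and one substantive. Minor: your parenthetical has the pivots backwards --- the inequality $\lcm(d_3,d_4) \le 2(d_3+d_4)$ follows from pivoting \ref{BoundsFrobNbr}(b) at $d_1$ (giving $F \le d_1d_3 + d_1d_4 - d_1 - d_3 - d_4$), whereas pivoting at $d_3$ gives the other bound you need, namely $(d_1-2)\bigl(\tfrac{d_4}{h}-2\bigr) \le 4$ where $h = \gcd(d_3,d_4)$; both are required for your case analysis. Substantive: the ``finitely many exceptional pairs'' claim is false from $ab \le 2(a+b)$ alone (the family $a = 2$, $b$ odd is infinite); it becomes true only after combining with the second bound for $d_1 \ge 3$ and killing $d_1 = 2$ outright by the parity argument. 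Moreover, even the surviving exceptional cases are not finite checks: they retain the unbounded parameters $h$ and $e = d_2/d_1$, so each must be closed by a residue argument modulo $d_1$, not by inspection. These arguments do exist --- e.g.\ $(a,b) = (2,3)$ forces $d_1 = 5$ and then $\alpha \equiv -5h \equiv 0 \pmod 5$, so $\alpha \in \langle 5 \rangle$; $(a,b) = (2,5)$ forces $d_1 = 3$ and then $\alpha \equiv d_3 \pmod 3$ with $\alpha \ge d_3$; $(3,4)$ and $(3,5)$ die by coprimality with $d_1$; and the $(e,f)$ table in the $d_3 \mid d_4$ case dies by well-formedness or by $\alpha \in \Gamma$ --- but as written this decisive work, which is where the paper spends essentially its entire proof, is asserted (``one checks'') rather than carried out.
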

	
	\begin{proof}
		Without loss of generality, we may assume that $d_3 \mid d_4$.
		Then $g_{1,3} = 1 = g_{2,3}$.
		Let $s = d_4/d_3 \in \Nat^+$.
		Define $a = \big\lceil \frac{s+1}{g_{1,2}} \big\rceil$ and $c = a g_{1,2} - (s+1) \in \Nat$.
		We have $\alpha = L - d_1 - d_2 - (s+1)d_3 \equiv -(s+1)d_3 \pmod{g_{1,2}}$ and $c \equiv -(s+1) \pmod{g_{1,2}}$, so $\alpha - c d_3 \equiv 0 \pmod{g_{1,2}}$.
		Since $\alpha \notin \Gamma$ and $c \in \Nat$, we have $\alpha - c d_3 \notin \langle d_1,d_2 \rangle$,
		so Proposition \ref{BoundsFrobNbr}(a) implies that $\alpha - c d_3 \le L_{1,2} - d_1 - d_2$, i.e.,
		\begin{equation} \label {bvcxnc2987q6wt61tfsgdh4}
			0 \le  L_{1,2} - d_1 - d_2 - \alpha + c d_3
			=  L_{1,2} - L + (s+1+c)d_3
			=  L_{1,2} - L + a g_{1,2} d_3 .
		\end{equation}

		Define $s_i = \gcd(s,d_i)$ for $i=1,2$.
		Since $\gcd(s_1,s_2)$ divides $\gcd(d_1,d_2,d_4)=1$,
		and since $\gcd(s_1,d_3)$ divides $\gcd(d_1,d_3,d_4)=1$ and $\gcd(s_2,d_3)$ divides $\gcd(d_2,d_3,d_4)=1$,
		we have 
		\begin{equation}  \label {gmruynigh76r5nqq378b}
			\text{$s_1$, $s_2$ and $d_3$ are pairwise relatively prime.}
		\end{equation}
		We claim that 
		\begin{equation}  \label {kcj3hbhgaklincD}
			\gcd(s,L_{1,2}) = s_1 s_2 .
		\end{equation}
		To see this, first note that $s_i \mid \gcd(s,L_{1,2})$ for each $i=1,2$; so $s_1 s_2 \mid \gcd(s,L_{1,2})$ by \eqref{gmruynigh76r5nqq378b}.
		Conversely, if a prime power $p^i$ divides $\gcd(s,L_{1,2})$ then the fact that $\gcd(s,d_1,d_2)=1$ implies that $p^i$ divides $s_1$ or $s_2$, so $p^i \mid s_1 s_2$,
		showing that $\gcd(s,L_{1,2})$ divides $s_1s_2$.  This proves \eqref{kcj3hbhgaklincD}.
		Define
		$$
		\textstyle     \sigma = \frac{ \lcm(s, L_{1,2}) }{ L_{1,2} } \in \Nat^+
		$$
		and note that 
		$\sigma = \frac{ \lcm(s, L_{1,2}) }{ L_{1,2} } 
		= \frac{ s L_{1,2} }{ L_{1,2} \gcd(s, L_{1,2}) } 
		= \frac{ s }{ \gcd(s,L_{1,2}) } 
		= \frac{ s }{ s_1 s_2 }$ by \eqref{kcj3hbhgaklincD}; so  
		$$
		s = s_1 s_2 \sigma .
		$$
		Since $\gcd(L_{1,2},d_3)=1$, we have
		$L = \lcm( L_{1,2}, sd_3 )
		= \frac{ L_{1,2} \, sd_3 }{\gcd( L_{1,2} , sd_3 )}
		= \frac{ L_{1,2} \, sd_3 }{\gcd( L_{1,2} , s )}
		= \lcm( L_{1,2}, s ) d_3 = L_{1,2} \, \sigma d_3$. So \eqref{bvcxnc2987q6wt61tfsgdh4} gives
		$0 \le  L_{1,2} - L + a g_{1,2} d_3 = L_{1,2} (1 - \sigma d_3) +  a g_{1,2} d_3$ and hence
		\begin{equation}  \label {dj37r83peu9y0lqf}
			L_{1,2} (\sigma d_3 - 1) \le  a  g_{1,2} d_3 .
		\end{equation}
		Since $\gcd(L_{1,2},d_3)=1$ and $d_3>1$, we have $d_3 \nmid L_{1,2}$, so the inequality in \eqref{dj37r83peu9y0lqf} is strict;
		since both sides of this strict inequality are multiples of $g_{1,2}$, it follows that
		$L_{1,2} (\sigma d_3 - 1) \le  a g_{1,2} d_3 - g_{1,2}$, so
		\begin{equation}  \label {uytrvbnm2k3j4hg5f6e7w894j3n4A7n5y}
			\textstyle   \frac{L_{1,2}}{g_{1,2}} \le  \frac{ a d_3 - 1 }{ \sigma d_3 - 1} \, .
		\end{equation}
		
		Consider the case $s=1$. Then $\sigma=1$, so \eqref{uytrvbnm2k3j4hg5f6e7w894j3n4A7n5y} gives
		$\big( \frac{ d_1 }{ g_{1,2} } \big) \big( \frac{ d_2 }{ g_{1,2} } \big) = \frac{L_{1,2}}{g_{1,2}} \le  \frac{ a d_3 - 1 }{ d_3 - 1} \le 2a-1$,
		where the last inequality uses $d_3\ge2$.
		Recall that $a = \big\lceil \frac{s+1}{g_{1,2}} \big\rceil = \big\lceil \frac{2}{g_{1,2}} \big\rceil$.
		If $g_{1,2}=1$ then $a=2$, so $d_1 d_2 \le 3$, which contradicts \ref{hrj5687q32hnn}(b).
		So $g_{1,2}>1$, in which case we have $a=1$, so  $\big( \frac{ d_1 }{ g_{1,2} } \big) \big( \frac{ d_2 }{ g_{1,2} } \big) \le 1$,
		so $d_1=d_2$ and $d_4=sd_3=d_3$, showing that $| \{ d_1, d_2, d_3, d_4 \} | \le 2$.
		In view of \ref{hrj5687q32hnn}(d), this shows that if $s=1$ then  $| \{ d_1, d_2, d_3, d_4 \} | = 2$.
		So, to complete the proof, it suffices to show that
		\begin{equation} \label {tv5nm98ewvhjfxgqyja2xda1}
			\text{the case $s\ge2$ does not occur.}
		\end{equation}
		
		From now-on, we assume that $s\ge2$.
		Since $\gcd(g_{1,2},s)$ divides $\gcd(d_1,d_2,d_4)=1$, we have 
		\begin{equation} \label {bv65T2cbv6vjwu}
			\gcd(g_{1,2},s)=1 .
		\end{equation}
		For each $i \in \{1,2\}$, define $\delta_i = d_i/s_i \in \Nat\setminus\{0\}$.
		For each $i \in \{1,2\}$, $g_{1,2}$ divides $d_i = \delta_i s_i$ and is relatively prime to $s_i$, so $g_{1,2} \mid \delta_i$. Thus,
		\begin{equation}  \label {nvh4hFGbhf7gbiknd3982ZEFWe763grbn9}
			\textstyle
			\frac{ L_{1,2} }{ g_{1,2} }
			= \big( \frac{ d_1 }{ g_{1,2} } \big) \big( \frac{ d_2 }{ g_{1,2} } \big)
			= \big( \frac{ \delta_1 }{ g_{1,2} } \big) \big( \frac{ \delta_2 }{ g_{1,2} } \big) s_1 s_2, \quad
			\text{where $\big(\frac{ \delta_1 }{ g_{1,2} } \big) , \big( \frac{ \delta_2 }{ g_{1,2} } \big) \in \Nat^+$.}
		\end{equation}
		
		We shall now consider three cases ($g_{1,2} = 1$, $g_{1,2} = 2$, $g_{1,2} \ge 3$) and show that none of them is possible.
		Assume that  $g_{1,2} \in \{1,2\}$.
		Note that $g_{1,2} \mid (s+1)$, because \eqref{bv65T2cbv6vjwu} shows that if $g_{1,2}=2$ then $s$ is odd.
		So $a = \big\lceil \frac{s+1}{g_{1,2}} \big\rceil = \frac{s+1}{g_{1,2}}$.
		This together with \eqref{uytrvbnm2k3j4hg5f6e7w894j3n4A7n5y} gives
		$$
		L_{1,2} (\sigma d_3 - 1) \le g_{1,2} a d_3 - g_{1,2} = (s+1)d_3 - g_{1,2} < (s+1)d_3 .
		$$
		In view of \eqref{nvh4hFGbhf7gbiknd3982ZEFWe763grbn9}, it follows that
		\begin{equation}  \label {vbvPncbwerb26b4nm23c56sdf834761}
			\textstyle
			\big( \frac{ \delta_1 }{ g_{1,2} } \big) \big( \frac{ \delta_2 }{ g_{1,2} } \big) 
			= \frac{L_{1,2}}{ s_1 s_2 g_{1,2} }
			<  \frac{ (s+1)d_3 }{ s_1 s_2 g_{1,2} (\sigma d_3 - 1) }
			= \big(\frac1{g_{1,2}}\big) \big(\frac{ s+1}{s} \big) \big( \frac{d_3}{ d_3 - \frac1{\sigma} } \big) \, .
		\end{equation}
		Note that if $\delta_1 = 1 = \delta_2$ then both $d_1 = s_1$ and $d_2=s_2$ divide $d_4$, so in fact $L = d_4$,
		contradicting \ref{hrj5687q32hnn}(c).  So $\delta_1 \delta_2 \ge 2$.
		
		\medskip
		
		If $g_{1,2}=1$ then \eqref{vbvPncbwerb26b4nm23c56sdf834761} gives
		$2 \le \delta_1 \delta_2 < \big(\frac{ s+1}{s} \big) \big( \frac{d_3}{ d_3 - \frac1{\sigma} } \big) \le 3$ (the last inequality because $s,d_3\ge2$), so $\delta_1\delta_2=2$.
		We may assume that $\delta_1=1$ and $\delta_2=2$; then $d_1 = s_1$ and $d_2 = 2s_2$. Since $g_{2,3}=1$, $d_3$ is odd, so $d_3\ge3$.
		If $d_3\neq 3$ then $d_3 \ge5$ and hence $\big(\frac{ s+1}{s} \big) \big( \frac{d_3}{ d_3 - \frac1{\sigma} } \big) < 2$, a contradiction.
		So $(d_1,d_2,d_3) = (s_1,2s_2,3)$. Since $g_{1,2}=1$ and $g_{1,3}=1$, $s_1$ is odd and $s_1 \neq 3$, so $s_1\ge5$ and hence $s\ge5$.
		This implies that  $\big(\frac{ s+1}{s} \big) \big( \frac{d_3}{ d_3 - \frac1{\sigma} } \big) < 2$, a contradiction.
		So the case $g_{1,2}=1$ does not occur.
		
		\medskip
		
		If $g_{1,2}=2$ then \eqref{vbvPncbwerb26b4nm23c56sdf834761} gives $1 \le \big( \frac{ \delta_1 }{ 2 } \big) \big( \frac{ \delta_2 }{ 2 } \big) 
		< \big(\frac1{2}\big) \big(\frac{ s+1}{s} \big) \big( \frac{d_3}{ d_3 - \frac1{\sigma} } \big)$, so
		$\big(\frac{ s+1}{s} \big) \big( \frac{d_3}{ d_3 - \frac1{\sigma} } \big) > 2$.
		Since $1 = \gcd( g_{1,2} , d_4 ) = \gcd(2, d_4)$, $d_4 = sd_3$ is odd, so $d_3$ and $s$ are odd. Hence, $d_3\ge3$ and $s\ge3$.
		Then $2 < \big(\frac{ s+1}{s} \big) \big( \frac{d_3}{ d_3 - \frac1{\sigma} } \big) \le \frac43 \cdot \frac32 = 2$, a contradiction.
		So the case $g_{1,2}=2$ does not occur either. 
		
		\medskip
		
		From now-on, we assume that $g_{1,2} \ge 3$; let us show that this leads to a contradicition.
		The definition $a = \big\lceil \frac{s+1}{g_{1,2}} \big\rceil$ implies that $a < \frac{s+1}{g_{1,2}} + 1$, so $a g_{1,2} < s+1 + g_{1,2}$ and hence
		$a g_{1,2} \le s + g_{1,2}$. 
		So \eqref{uytrvbnm2k3j4hg5f6e7w894j3n4A7n5y} gives
		$L_{1,2} (\sigma d_3 - 1) \le  a g_{1,2} d_3 - g_{1,2} \le  (s + g_{1,2}) d_3 - g_{1,2}$, or equivalently (using \eqref{nvh4hFGbhf7gbiknd3982ZEFWe763grbn9})
		$$
		\textstyle
		\big( \frac{ \delta_1 }{ g_{1,2} } \big) \big( \frac{ \delta_2 }{ g_{1,2} } \big) s_1 s_2 g_{1,2} 
		= L_{1,2} \le \frac {(s + g_{1,2}) d_3 - g_{1,2}} { \sigma d_3-1 }
		= \frac {s d_3} { \sigma d_3-1 } + \frac{ g_{1,2} (d_3 - 1) } { \sigma d_3-1 }
		= \frac {s d_3} { \sigma (d_3 - \frac1{\sigma}) } + \frac{ g_{1,2} (d_3 - 1) } { \sigma  (d_3 - \frac1{\sigma})} \, . 
		$$
		Dividing both sides by $s_1 s_2 g_{1,2}$ and using $s = s_1 s_2 \sigma$ gives
		\begin{equation}  \label {463j5m67g8firtg6hbh567hje238iwsk46rtg} 
			\textstyle
			1
			\le \big( \frac{ \delta_1 }{ g_{1,2} } \big) \big( \frac{ \delta_2 }{ g_{1,2} } \big)
			\le \frac1{ g_{1,2} } \big( \frac {d_3} { d_3 - \frac1{\sigma} } \big) + \frac1s \big( \frac{ d_3 - 1 } { d_3 - \frac1{\sigma}} \big) \, .
		\end{equation}
		If $\sigma \ge2$ then (using $g_{1,2} \ge3$ and $s,d_3 \ge2$)
		$\frac1{ g_{1,2} } \big( \frac {d_3} { d_3 - \frac1{\sigma} } \big) + \frac1s \big( \frac{ d_3 - 1 } { d_3 - \frac1{\sigma}} \big)
		\le \frac13 \cdot \frac2{3/2} + \frac12 \cdot \frac1{3/2} < 1$, which contradicts \eqref{463j5m67g8firtg6hbh567hje238iwsk46rtg}.
		So $\sigma=1$ and $s=s_1s_2$.
		Note that $\gcd(s,d_3) = \gcd(s_1s_2,d_3) = 1$ by \eqref{gmruynigh76r5nqq378b}, and $\gcd( g_{1,2}, s d_3 ) = \gcd( d_1, d_2, d_4 ) =1$; so
		\begin{equation}  \label {cvb765j43kiaxcvhj3h45r6e7}
			\text{$g_{1,2}$, $s$ and $d_3$ are pairwise relatively prime.}
		\end{equation}
		Substituting $\sigma=1$ in \eqref{463j5m67g8firtg6hbh567hje238iwsk46rtg} gives
		\begin{equation} \label {hxcg872352udb}
			\textstyle
			1 \le \big( \frac{ \delta_1 }{ g_{1,2} } \big) \big( \frac{ \delta_2 }{ g_{1,2} } \big)
			\le \frac1{ g_{1,2} } \big( \frac {d_3} { d_3 - 1 } \big) + \frac1s 
			\le \frac2{ g_{1,2} } + \frac1s \, . 
		\end{equation}
		Now  $\frac2{ g_{1,2} } + \frac1s \ge1$ is equivalent to $(g_{1,2} - 2)(s-1) \le 2$, which implies that $(g_{1,2},s)=(3,2)$
		(because $g_{1,2} \ge3$, $s \ge2$ and $\gcd(g_{1,2},s)=1$).
		Then \eqref{hxcg872352udb} gives 
		$1 \le \frac1{ g_{1,2} } \big( \frac {d_3} { d_3 - 1 } \big) + \frac1s = \frac1{ 3 } \big( \frac {d_3} { d_3 - 1 } \big) + \frac12$,
		so $d_3 \in \{2,3\}$, which contradicts \eqref{cvb765j43kiaxcvhj3h45r6e7}.
		So the case $g_{1,2} \ge 3$ does not occur. This proves \eqref{tv5nm98ewvhjfxgqyja2xda1}, and completes the proof of the Lemma.
	\end{proof}

	\begin{remark}  \label {iuytcvbCi0E23m4n567ydh}  
		In order to finish the proof of Theorem A,
		it suffices to prove that there exist distinct $i,j \in \{1,2,3,4\}$ such that $d_i \mid d_j$.
		Indeed, if such $i,j$ exist then Lemma \ref{j68Sh256euj149rilseuay} implies that $| \{d_1,d_2,d_3,d_4\} | = 2$,
		which (by Remark \ref{9823cb9i2390jc}) completes the proof of the Theorem.
		Our proof that $i,j$ exist is by contradiction.
		In other words, we shall prove that Assumptions \ref{8273gbrfo933jd} lead to a contradiction.
	\end{remark}

	\begin{assumptions} \label {8273gbrfo933jd}
		We continue to assume that the conditions of \ref{7823yrefj9ronf} and \ref{jchfp2o3bew9} are satisfied, and we also assume that
		$$
		\text{no elements $i,j \in \{1,2,3,4\}$ satisfy $i \neq j$ and $d_i \mid d_j$.}
		$$
		We will derive a contradiction from the above assumption.
		Without loss of generality, we also assume that the labeling of $d_1,d_2,d_3,d_4$ is such that $\max(d_1,d_2,d_3,d_4) = d_4$.
		Define positive integers $m,f,g$ by
		$$
		\textstyle
		m = \frac L{d_4}, \quad \frac{L}{ d_1 d_2 d_3 } = \frac fg \quad \text{and} \quad \gcd(f,g)=1 .
		$$
		Note that $m\ge2$  (if $m=1$ then $L=d_4$ contradicts \ref{hrj5687q32hnn}(c)).
	\end{assumptions}

	\begin{lemma}  \label {98y772364tyet2hbZxmbv}
		\mbox{ }
		\begin{enumerate}[\rm(a)]
			
			\item $g = g_{1,2} \, g_{1,3} \, g_{2,3}$
			
			\item $g \mid m$
			
		\end{enumerate}
	\end{lemma}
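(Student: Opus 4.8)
The plan is to argue prime by prime using the valuation $v_p$ from the Notation section. Fix a prime $p$ and set $a_i = v_p(d_i)$. Because $g$ is the reduced denominator of $\frac{L}{d_1 d_2 d_3}$ (and $\gcd(f,g)=1$), we have
$$
v_p(g) = \max\bigl(0,\ a_1+a_2+a_3 - v_p(L)\bigr), \qquad v_p(L) = \max(a_1,a_2,a_3,a_4),
$$
whereas $v_p(g_{1,2}g_{1,3}g_{2,3}) = \min(a_1,a_2)+\min(a_1,a_3)+\min(a_2,a_3)$ and $v_p(m) = v_p(L) - a_4$. Statements (a) and (b) are thus equivalent to an identity and an inequality among $a_1,a_2,a_3,a_4$, to be checked for every $p$.

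The key input is well-formedness: since $\gcd(d_i,d_j,d_k)=1$ for each triple of distinct indices, the minimum of any three of $a_1,\dots,a_4$ is $0$, i.e. \emph{at most two of $a_1,a_2,a_3,a_4$ are positive}. For (a), if at most one of $a_1,a_2,a_3$ is positive then all three pairwise minima vanish, so the right-hand side is $0$; and the unique positive value (if any) is at most $v_p(L)$, so $v_p(g)=0$ as well. If two of $a_1,a_2,a_3$ are positive, say $a_1,a_2>0$ and $a_3=0$ (the remaining cases being symmetric), then $a_4=0$ by the ``at most two'' principle, so $v_p(L)=\max(a_1,a_2)$ and both sides equal $\min(a_1,a_2)$. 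This proves (a).

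For (b) I compare $v_p(g)$ with $v_p(m)=v_p(L)-a_4$. If $v_p(g)=0$ the inequality $v_p(g)\le v_p(m)$ holds trivially; and the only way $v_p(g)>0$ can occur is the second case above, where $a_4=0$, $v_p(g)=\min(a_1,a_2)$ and $v_p(m)=\max(a_1,a_2)$, so again $v_p(g)\le v_p(m)$. Hence $g\mid m$. Note that only well-formedness (not the full Assumptions \ref{8273gbrfo933jd}) is used. The proof is essentially a bookkeeping exercise; the one step that requires care is recognizing the ``at most two positive valuations'' reduction, since it is exactly this that forces $a_4=0$ precisely when $g$ acquires a factor of $p$, making the asymmetric role of $d_4$ in part (b) harmless.
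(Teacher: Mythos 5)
Your proof is correct and follows essentially the same route as the paper's: both arguments work prime by prime with the valuations $v_p$, using well-formedness to conclude that each prime divides at most two of $d_1,d_2,d_3,d_4$, which is exactly what forces $v_p(g) = v_p(g_{1,2}\,g_{1,3}\,g_{2,3})$ and $a_4 = 0$ whenever $v_p(g)>0$. The only cosmetic difference is in (b), where the paper deduces $g \mid m$ from (a) by noting each $g_{i,j}$ is coprime to $d_4$ and divides $L = m d_4$ (so the pairwise coprime factors $g_{i,j}$ all divide $m$), whereas you continue the valuation bookkeeping; the underlying fact is the same.
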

	
	\begin{proof}
		Define $e_p = v_p\big( \frac{L}{ d_1d_2d_3} \big)$ for each prime number $p$, and observe that
		$$
		\textstyle  f = \prod\limits_{e_p > 0} p^{e_p} \quad \text{and} \quad g = \prod\limits_{e_p < 0} p^{-e_p} .
		$$
		If $p$ is a prime factor of $g$ then $v_p(d_1d_2d_3) > v_p(L) \ge \max\limits_{1 \le i \le 3} v_p(d_i)$, so $p$ divides exactly two elements of $\{ d_1, d_2, d_3 \}$
		and consequently there exist distinct $i,j \in \{1,2,3\}$ such that $p \mid g_{i,j}$. Thus,
		\begin{equation}  \label {jchbw98isnsd}
			\text{every prime factor of $g$ is a factor of $g_{1,2} \, g_{1,3} \, g_{2,3}$.}
		\end{equation}
		Conversely, suppose that $p$ is a prime factor of  $g_{1,2} \, g_{1,3} \, g_{2,3}$.
		Then there exists exactly one choice of elements $i<j$ of $\{1,2,3\}$ such that $p \mid g_{i,j}$; we have
		$ v_p(d_1d_2d_3) = v_p( d_i d_j ) = \max( v_p(d_i), v_p(d_j) ) + \min( v_p(d_i), v_p(d_j) ) = v_p(L) + v_p(g_{i,j})$,
		so $e_p = v_p(L) - v_p(d_1d_2d_3) = - v_p( g_{i,j} ) < 0$ and hence $p \mid g$.
		Moreover, $v_p(g) = -e_p = v_p(g_{i,j}) = v_p( g_{1,2} \, g_{1,3} \, g_{2,3} )$. Hence,
		\begin{equation}  \label {en8s3dcyfcjf}
			\text{every prime factor $p$ of $g_{1,2} \, g_{1,3} \, g_{2,3}$ satisfies $v_p(g) = v_p( g_{1,2} \, g_{1,3} \, g_{2,3} )$.}
		\end{equation}
		Statements \eqref{jchbw98isnsd} and \eqref{en8s3dcyfcjf} imply that $g_{1,2} \, g_{1,3} \, g_{2,3} = g$, which proves (a).
		
		Let $i<j$ be elements of $\{1,2,3\}$. Then $\gcd( g_{i,j} , d_4 ) = 1$ and $g_{i,j}$ divides $L = md_4$, so $g_{i,j} \mid m$.
		Since $g_{1,2}, g_{1,3}, g_{2,3}$ are pairwise relatively prime, we obtain $(g_{1,2} \, g_{1,3} \, g_{2,3}) \mid m$, proving (b).
	\end{proof}

	\begin{lemma}  \label {56nOp9zg54yu}
		For every choice of distinct $i,j \in \{1,2,3\}$, we have  $gm \nmid d_i d_j$.
	\end{lemma}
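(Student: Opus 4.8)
The plan is to argue by contradiction and to reduce the divisibility $gm \mid d_i d_j$ to a much more transparent divisibility involving $d_4$. Fix distinct $i,j \in \{1,2,3\}$ and let $k$ be the remaining index, so that $\{i,j,k\} = \{1,2,3\}$; suppose toward a contradiction that $gm \mid d_i d_j$.

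The key step is the reduction itself. From the defining relation $\frac{L}{d_1 d_2 d_3} = \frac fg$ with $\gcd(f,g)=1$, cross-multiplying yields the integer identity $gL = f\,d_1 d_2 d_3 = f\,d_i d_j d_k$. Combining this with $m = L/d_4$ (and noting $m \in \Nat^+$ since $d_4 \mid L$, so $gm = gL/d_4$ is a positive integer), I would compute
\[
  \frac{d_i d_j}{gm} \;=\; \frac{d_i d_j\, d_4}{gL} \;=\; \frac{d_i d_j\, d_4}{f\, d_i d_j d_k} \;=\; \frac{d_4}{f\, d_k}.
\]
Consequently $gm \mid d_i d_j$ holds if and only if $f d_k \mid d_4$.

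It then remains to observe that $f d_k \mid d_4$ is impossible. Since $f \in \Nat^+$ we have $d_k \mid f d_k$, so $f d_k \mid d_4$ forces $d_k \mid d_4$; as $k \neq 4$, this contradicts the standing hypothesis of Assumptions \ref{8273gbrfo933jd} that no index divides another. This contradiction gives $gm \nmid d_i d_j$, and since the pair was arbitrary the lemma follows. I expect the only genuine obstacle to be spotting the clean cancellation in the displayed identity; once the equivalence $gm \mid d_i d_j \iff f d_k \mid d_4$ is established, the no-divisibility assumption closes the argument at once, and Lemma \ref{98y772364tyet2hbZxmbv} is not even required.
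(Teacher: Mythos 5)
Your proof is correct, and it takes a genuinely more elementary route than the paper's. Both arguments boil down to showing that the hypothesis $gm \mid d_i d_j$ forces $d_k \mid d_4$, contradicting Assumptions \ref{8273gbrfo933jd}; they differ in how the key integrality is obtained. The paper first proves $\lcm(d_1,d_2,d_3) = \frac{d_1 d_2 d_3}{g}$ (equation \eqref{L123M}, which is where Lemma \ref{98y772364tyet2hbZxmbv}(a), i.e.\ $g = g_{1,2}\, g_{1,3}\, g_{2,3}$, gets used), and then invokes $\lcm(d_1,d_2,d_3) \mid L$ to conclude that $\frac{d_1 d_2 d_3}{gm}$ divides $d_4$; since that quantity is a multiple of $d_k$ under the contradiction hypothesis, $d_k \mid d_4$ follows. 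You bypass all of this: cross-multiplying the definition $\frac{L}{d_1 d_2 d_3} = \frac fg$ gives $gL = f\, d_1 d_2 d_3$, whence the identity of rational numbers $\frac{d_i d_j}{gm} = \frac{d_4}{f d_k}$, so the hypothesis immediately yields $f d_k \mid d_4$ and in particular $d_k \mid d_4$. The two computations are in fact the same one in disguise: the paper's integer $\frac{L}{\lcm(d_1,d_2,d_3)}$ is precisely $f$, and its quantity $\frac{d_1 d_2 d_3}{gm}$ is your $\frac{d_4}{f}$. Your version exposes the needed integrality as holding by the very definition of $f$, so neither \eqref{L123M} nor Lemma \ref{98y772364tyet2hbZxmbv} is required, exactly as you note (indeed you never even use $\gcd(f,g)=1$, only the identity $gL = f\,d_1 d_2 d_3$). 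What the paper's detour buys is the structural identification of $\frac{d_1 d_2 d_3}{g}$ with $\lcm(d_1,d_2,d_3)$, but since that fact is not reused elsewhere, your argument is a clean simplification of this lemma's proof.
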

	
	\begin{proof}
		Let $L_{1,2,3} =\lcm(d_1,d_2,d_3)$ and $M = \frac{d_1 d_2 d_3}{ g }$.  We will show that 
		\begin{equation}\label{L123M}
			L_{1,2,3} = M.
		\end{equation} 
		Lemma \ref{98y772364tyet2hbZxmbv}(a) implies that
		$\frac{M}{d_1} = \frac{d_2 d_3}{ g } = \frac{d_2}{ g_{1,2} \, g_{2,3}} \cdot \frac{d_3}{g_{1,3} } \in \Integ$, so $d_1 \mid M$.
		By symmetry, $d_2, d_3 \mid M$, so $L_{1,2,3} \mid M$.
		Conversely, consider a prime factor $p$ of $M$. We can choose $u,v,w$ such that $\{u,v,w\} = \{1,2,3\}$ and $p \nmid d_w$.
		Then $v_p(M) = v_p( \frac{d_u d_v}{ g_{u,v} } ) + v_p( \frac{d_w}{ g_{u,w} \, g_{v,w} } ) = v_p( \frac{d_u d_v}{ g_{u,v} } ) = v_p(L_{u,v}) \le v_p(L_{1,2,3})$,
		and this shows that $M \mid L_{1,2,3}$. This shows \eqref{L123M}.
		
		Arguing by contradiction, suppose that $i,j,k$ are such that $\{i,j,k\} = \{1,2,3\}$ and $gm \mid d_i d_j$.
		Then $\frac{d_1 d_2 d_3}{gm} \in d_k \Integ$. We have $\frac{d_1 d_2 d_3}{g} = M = L_{1,2,3}$ by \eqref{L123M}, so
		$$
		\textstyle
		\frac{d_4}{\big(\frac{d_1 d_2 d_3}{gm}\big)}
		= \frac{m d_4}{\big(\frac{d_1 d_2 d_3}{g}\big)}
		= \frac{L}{L_{1,2,3}} \in \Integ,
		$$
		so $\frac{d_1 d_2 d_3}{gm}$ divides $d_4$. Since $\frac{d_1 d_2 d_3}{gm} \in d_k \Integ$, we have $d_k \mid d_4$, which contradicts \ref{8273gbrfo933jd}.
	\end{proof}

	\begin{lemma}  \label {PrQpo87y3248r3f829jenxcbweu}
		Let $i,j,k$ be such that $\{i,j,k\} = \{1,2,3\}$.
		If $g_{i,j}=1$ then $\frac fg \le \big( \frac{m}{m-1} \big) \big( \frac1{d_i d_j} + \frac1{d_k} \big)$.
	\end{lemma}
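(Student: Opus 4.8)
The plan is to reduce the claimed fractional inequality to a transparent statement about $L$ and the $d_m$'s, and then to obtain that statement immediately from the fact that $\alpha$ is a gap of the sub-semigroup $\langle d_i, d_j \rangle$.

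First I would clear denominators. By definition $\frac fg = \frac{L}{d_1 d_2 d_3} = \frac{L}{d_i d_j d_k}$, and $\frac1{d_i d_j} + \frac1{d_k} = \frac{d_i d_j + d_k}{d_i d_j d_k}$, so multiplying the desired inequality through by the positive quantity $d_i d_j d_k$ shows it is equivalent to $L \le \frac{m}{m-1}(d_i d_j + d_k)$. Dividing by $m$ and using $L = m d_4$ turns this into $(m-1)d_4 \le d_i d_j + d_k$; adding $d_4$ to both sides and recalling $m d_4 = L$ rewrites it as
$$
L \le d_i d_j + d_k + d_4 .
$$
Since $g_{i,j} = 1$ we have $L_{i,j} = d_i d_j$, so it suffices to prove $L \le L_{i,j} + d_k + d_4$.

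Next I would invoke Assumption \ref{jchfp2o3bew9}, namely $\alpha = L - \sum_{m=1}^4 d_m \in \Nat \setminus \Gamma$. Because $\langle d_i, d_j \rangle \subseteq \Gamma$, the element $\alpha$ cannot lie in $\langle d_i, d_j \rangle$ either. As $g_{i,j} = \gcd(d_i, d_j) = 1$, Proposition \ref{BoundsFrobNbr}(a) applies to the pair $(d_i, d_j)$ with its divisibility hypothesis automatically satisfied; its contrapositive states that every element of $\Nat$ lying outside $\langle d_i, d_j \rangle$ is at most $L_{i,j} - d_i - d_j$. Applying this to $\alpha$ yields $\alpha \le L_{i,j} - d_i - d_j$.

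Finally I would substitute $\alpha = L - d_i - d_j - d_k - d_4$ into this bound. The terms $-d_i - d_j$ cancel on the two sides, leaving $L - d_k - d_4 \le L_{i,j}$, which is exactly the inequality $L \le L_{i,j} + d_k + d_4$ required above. I do not anticipate any genuine obstacle: the only step that takes a moment of thought is the bookkeeping in the first paragraph, which converts the fractional inequality (with its $\frac{m}{m-1}$ factor) into the clean form $L \le L_{i,j} + d_k + d_4$; once that reduction is made, the estimate is a single application of the two-generator Frobenius bound to the gap $\alpha$.
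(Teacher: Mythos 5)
Your proof is correct and follows essentially the same route as the paper's: both arguments apply Proposition \ref{BoundsFrobNbr}(a) to the pair $(d_i,d_j)$ (with the divisibility hypothesis trivial since $g_{i,j}=1$) to get $\alpha \le d_id_j - d_i - d_j$ from $\alpha \notin \Gamma$, and then the identity $L = \alpha + \sum_{\ell=1}^4 d_\ell = md_4$ yields $(m-1)d_4 \le d_id_j + d_k$, which is the stated fractional inequality after dividing by $\frac{(m-1)d_1d_2d_3}{m}$. The only difference is presentational — you reduce the target inequality to $L \le L_{i,j} + d_k + d_4$ first and then verify it, while the paper derives the bound on $\alpha$ first and converts at the end; both hinge on the standing fact $m \ge 2$ from Assumptions \ref{8273gbrfo933jd}, which you use implicitly when dividing by $m-1$.
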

	
	\begin{proof}
		Since $g_{i,j} = 1$, Proposition \ref{BoundsFrobNbr}(a) implies
		that every integer strictly greater than $d_i d_j - d_i - d_j$ belongs to $\langle d_i , d_j \rangle$ and hence to $\Gamma$.
		So $\alpha \le d_i d_j - d_i - d_j$, because $\alpha \notin \Gamma$.
		So $m d_4 = L = \alpha + \sum_{\ell=1}^4 d_\ell \le d_i d_j - d_i - d_j + \sum_{\ell=1}^4 d_\ell = d_i d_j + d_k + d_4$ and consequently
		$(m-1) d_4 \le d_i d_j + d_k$. Multiplying both sides by $\frac{m}{(m-1)d_1 d_2 d_3}$  gives
		$$
		\textstyle  \frac{m d_4}{ d_1 d_2 d_3 } \le \frac{m}{m-1} \cdot \frac{d_i d_j + d_k}{ d_1 d_2 d_3 } = \frac{m}{m-1} \big(\frac1{d_k} + \frac1{d_i d_j}) .
		$$
		Since $\frac{m d_4}{ d_1 d_2 d_3 } = \frac{L}{ d_1 d_2 d_3 } = \frac fg$, the result follows.
	\end{proof}

	\begin{lemma}  \label {mrh6t7w3h4cfap}
		We have $g_{i,j} > 1$ for all choices of distinct $i,j \in \{1,2,3\}$.
	\end{lemma}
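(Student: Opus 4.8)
The plan is to argue by contradiction. Suppose some $g_{i,j}=1$ with $i,j\in\{1,2,3\}$; relabelling $d_1,d_2,d_3$ (and keeping $d_4=\max$ as in \ref{8273gbrfo933jd}) I may assume $g_{1,2}=1$. Since $\gcd(d_1,d_2)=1$ and $\alpha\notin\Gamma\supseteq\langle d_1,d_2\rangle$, Proposition \ref{BoundsFrobNbr}(a) forces $\alpha\le d_1 d_2-d_1-d_2$, i.e. $L\le d_1d_2+d_3+d_4$. The whole strategy is to contradict this by establishing the reverse strict inequality $L> d_1d_2+d_3+d_4$; this is equivalent to $\alpha\ge(d_1-1)(d_2-1)$, which by the Sylvester--Frobenius description of $\langle d_1,d_2\rangle$ would give $\alpha\in\langle d_1,d_2\rangle\subseteq\Gamma$, the desired contradiction.

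The key structural input comes from the divisor $\lcm(d_1,d_2,d_4)$ of $L$. Writing $g_{i,4}=\gcd(d_i,d_4)$, the no-divisibility hypothesis of \ref{8273gbrfo933jd} makes each $g_{i,4}$ a proper divisor of $d_i$, so $P:=d_1/g_{1,4}$ and $Q:=d_2/g_{2,4}$ satisfy $P,Q\ge2$; moreover $(P,Q)=(2,2)$ is impossible, since it would make $d_1,d_2$ both even and contradict $g_{1,2}=1$, whence $PQ\ge6$. Because $\gcd(d_1,d_2)=1$ we have $\gcd(d_1d_2,d_4)=g_{1,4}g_{2,4}$, so $\lcm(d_1,d_2,d_4)=\tfrac{d_1d_2d_4}{g_{1,4}g_{2,4}}=PQ\,d_4$ divides $L$. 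Setting $R:=d_4/(g_{1,4}g_{2,4})\in\Nat$ (an integer, since the coprime divisors $g_{1,4},g_{2,4}$ both divide $d_4$), this yields the lower bound $L\ge PQ\,d_4=d_1d_2R$.

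I expect the bulk of the cases to follow at once from this bound. When $R\ge2$, the estimate $PQ\ge6$ gives $d_4=d_1d_2R/(PQ)\le d_1d_2R/6$, so $d_3+d_4\le 2d_4\le d_1d_2R/3<d_1d_2(R-1)$, and hence $L\ge d_1d_2R>d_1d_2+d_3+d_4$, as required. The residual case is $R=1$, where $d_4\mid d_1d_2$ and therefore $L=\lcm(d_1,d_2,d_3)$; writing $g=g_{1,3}g_{2,3}$ (Lemma \ref{98y772364tyet2hbZxmbv}(a)) and $d_3'=d_3/g$, a direct computation with $L=\tfrac{d_1d_2}{g}d_3$ (using $d_1d_2=ABg$ where $A=d_1/g_{1,3}$, $B=d_2/g_{2,3}$, so $AB\ge6$) settles all $d_3'\ge3$.

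The hard part will be the leftover configurations with $R=1$ and $d_3'\in\{1,2\}$: here $L=\lcm(d_1,d_2,d_3)$ is small enough that the crude bound fails, and $\alpha$ can genuinely lie below $(d_1-1)(d_2-1)$, so membership $\alpha\in\Gamma$ cannot be read off from $\langle d_1,d_2\rangle$ alone. I would dispose of these by invoking the sharper three-generator estimate of Proposition \ref{BoundsFrobNbr}(b) applied to $\langle d_1,d_2,d_3\rangle$ (whose generators are relatively prime by well-formedness), combined with the maximality $d_4=\max$ and the pairwise coprimality of $g_{1,4},g_{2,4},g_{3,4}$ (again a consequence of well-formedness, giving $d_4=g_{1,4}g_{2,4}g_{3,4}$ in this regime), in order to reduce to finitely many surviving tuples and then exhibit $\alpha$ explicitly as a nonnegative combination of the $d_i$, contradicting Assumption \ref{jchfp2o3bew9}.
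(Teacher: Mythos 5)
Your executed steps are correct: Proposition \ref{BoundsFrobNbr}(a) does give $L\le d_1d_2+d_3+d_4$; the identity $\lcm(d_1,d_2,d_4)=PQ\,d_4=d_1d_2R$ with $PQ\ge6$ is valid and kills $R\ge2$; and for $R=1$ the formula $L=\lcm(d_1,d_2,d_3)=d_1d_2\,d_3'$ kills $d_3'\ge2$ (your crude bound already handles $d_3'=2$, since then $L=2d_1d_2>\tfrac43 d_1d_2\ge d_1d_2+d_3+d_4$). The genuine gap is that the single remaining case, $R=1$ and $d_3'=1$ --- i.e.\ $L=d_1d_2$, $d_3=g_{1,3}g_{2,3}$, $d_4=g_{1,4}g_{2,4}$ --- is exactly where the whole difficulty of the lemma is concentrated, and you leave it as an unexecuted plan. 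Moreover the mechanism you predict for closing it would fail as stated: the constraints you list do not cut this case down to finitely many tuples, and the contradiction cannot be obtained by exhibiting $\alpha$ as a nonnegative combination of the $d_i$. What the constraints actually produce is an \emph{infinite} family that violates the standing no-divisibility assumption, so the contradiction comes from Assumption \ref{8273gbrfo933jd} itself, not from $\alpha\in\Gamma$.

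Concretely, the tools you name do suffice, and here is how the case closes (this is essentially the paper's own endgame, which reaches the same terminal configuration under the labels $\delta_k=1$, $f=1$): apply Proposition \ref{BoundsFrobNbr}(b) with $d_3$ as the repeated generator to get $\alpha\le Ag+Bg-d_1-d_2-d_3$ where $g=g_{1,3}g_{2,3}$; since $\alpha=d_1d_2-\sum_{i}d_i$ and $d_1d_2=ABg$, this reads $ABg-d_4\le(A+B)g$, and combining with $d_4=d_1d_2/(PQ)\le ABg/6$ yields $\tfrac56\le\tfrac1A+\tfrac1B$. As $A,B\ge2$ are coprime, this forces $\{A,B\}=\{2,3\}$, and then $g\le d_4\le g$, i.e.\ $d_4=g=d_3$, so $d_3\mid d_4$, contradicting Assumption \ref{8273gbrfo933jd} (equivalently, $g_{3,4}=1$ by well-formedness while $g_{3,4}=d_3>1$). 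Up to that point your organization is a legitimate alternative to the paper's: you case on $R$ and then $d_3'$, using only the two Frobenius-type bounds and elementary $\lcm$ identities, whereas the paper runs its analysis through $m=L/d_4$ and $f/g=L/(d_1d_2d_3)$, leaning on Lemmas \ref{98y772364tyet2hbZxmbv}(b), \ref{56nOp9zg54yu} and \ref{PrQpo87y3248r3f829jenxcbweu} to case on $\delta_k$ and then $f$; both routes funnel into the same final configuration $L=d_1d_2$. But as submitted, your proposal is missing precisely its hardest step, and its sketch of that step points at the wrong finishing mechanism.
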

	
	\begin{proof}
		Proceeding by contradiction, suppose that there exist $i,j,k$ such that  $\{i,j,k\} = \{1,2,3\}$ and $g_{i,j}=1$.
		Define $\delta_i = \frac{d_i}{g_{i,k}}$, $\delta_j = \frac{d_j}{g_{j,k}}$ and $\delta_k = \frac{d_k}{g_{i,k} \, g_{j,k}}$;
		then $\delta_i, \delta_j, \delta_k \in \Nat^+$ and
		\begin{equation} \label {8728379rj93jd} 
			\gcd(\delta_i, \, g_{j,k}\,\delta_k) = 1 = \gcd(\delta_j, \, g_{i,k}\,\delta_k) , \quad
			\gcd(\delta_i,\delta_j)=1, \quad \text{and} \quad\min( \delta_i,\delta_j ) \ge 2 \, .
		\end{equation}
		Indeed,
		$\gcd(\delta_i, \, g_{j,k}\delta_k) = \gcd( \frac{d_i}{g_{i,k}} , \frac{d_k}{g_{i,k}} ) = 1$,
		$\gcd(\delta_j, \, g_{i,k}\delta_k) = \gcd( \frac{d_j}{g_{j,k}} , \frac{d_k}{g_{j,k}} ) = 1$,
		and $\gcd(\delta_i,\delta_j)=1$ because $g_{i,j}=1$;
		we have $\min( \delta_i,\delta_j ) \ge 2$ because $d_i \nmid d_k$ and $d_j \nmid d_k$, so \eqref{8728379rj93jd} is true.
		We have $g = g_{i,k} \, g_{j,k}$ by Lemma \ref{98y772364tyet2hbZxmbv}(a), so
		Lemma \ref{PrQpo87y3248r3f829jenxcbweu} gives $\frac fg  
		\le \big( \frac{m}{m-1} \big) \big( \frac1{d_i d_j} + \frac1{d_k} \big)
		= \big( \frac{m}{m-1} \big) \big( \frac1{\delta_i \delta_j g} + \frac1{\delta_k g} \big)$, i.e.,
		\begin{equation} \label {hjcbut4rt37rh388}
			\textstyle
			1 \le f \le \big( \frac{m}{m-1} \big) \big( \frac1{\delta_i \delta_j} + \frac1{\delta_k} \big) .
		\end{equation}
		If $\delta_k>3$ then $\delta_i \delta_j \ge 6$, so $\frac1{\delta_i \delta_j} + \frac1{\delta_k} < \frac{1}{6} + \frac14 < \frac12$; if $\delta_k=3$ then \eqref{8728379rj93jd} implies that $\delta_i \delta_j \ge 10$, so $\frac1{\delta_i \delta_j} + \frac1{\delta_k} \le \frac{13}{30} < \frac12$;
		since $\frac{m}{m-1} \le 2$, \eqref{hjcbut4rt37rh388} gives $1 \le f < 1$ in both cases, a contradiction. So $\delta_k\in \{1,2\}$.
		
		If $\delta_k=2$ then
		it follows from \eqref{8728379rj93jd} that $\delta_i \delta_j \ge 15$, so \eqref{hjcbut4rt37rh388} gives
		$1 \le f \le \big( \frac{m}{m-1} \big) \frac{17}{30}$, which implies that $m=2$.
		Then $mg=2g = \delta_k g = d_k$ divides $d_id_k$, which contradicts Lemma \ref{56nOp9zg54yu}.
		So $\delta_k=1$. To summarize,
		$$
		(d_i, d_j, d_k) = (\delta_i g_{i,k} , \delta_j g_{j,k}, g) \quad \text{and} \quad  g = g_{i,k} g_{j,k} .
		$$
		Since $d_k \nmid d_i$ and $d_k \nmid d_j$, we have
		\begin{equation}  \label {6547m6n78ehwvqP6543ewsxcv2bhe8rf7d}
			g_{i,k} > 1 \quad \text{and} \quad g_{j,k}>1 . 
		\end{equation}
		
		We have $\delta_i \delta_j \ge6$ by \eqref{8728379rj93jd}, so
		$f \le \big( \frac{m}{m-1} \big) \big( \frac1{\delta_i \delta_j} + \frac1{\delta_k} \big) 
		\le \frac76 \frac{m}{m-1} \le \frac76 \cdot 2 < 3$, so $f \in \{1,2\}$.
		
		If $f=2$ then $2 \le \frac76 \frac{m}{m-1}$ implies that $m=2$, which is impossible because
		$g_{i,k} > 1$, $g_{j,k}>1$ and $g_{i,k} \, g_{j,k} = g$ divides $m$ by Lemma \ref{98y772364tyet2hbZxmbv}(b).  So $f=1$.
		Since $\frac{L}{d_1d_2d_3} = \frac fg$, this means that $L= \frac{d_i d_j d_k}{g} = \frac{d_i d_j g_{i,k} g_{j,k}}{ g }$, so
		$$
		L = d_i d_j = L_{i,j} .
		$$
		It follows that $d_4$ divides $d_id_j = \delta_i \delta_j g_{i,k} g_{j,k}$; since $\gcd(d_4, g_{i,k} g_{j,k}) = 1$,
		we get $d_4 \mid \delta_i \delta_j$ and hence
		\begin{equation}  \label {876tbfgjdke3mejycnx}
			d_4 \le \delta_i \delta_j .
		\end{equation}
		Proposition \ref{BoundsFrobNbr}(b) gives $F( \langle d_1, d_2, d_3 \rangle) \le L_{k,i} + L_{k,j} - d_1 - d_2 - d_3$.
		Note that $L_{k,i} = \frac{d_k d_i}{ g_{i,k} } = \delta_i g$ and $L_{k,j} = \frac{d_k d_j}{ g_{j,k} } = \delta_j g$.
		Since $\alpha \notin \langle d_1, d_2, d_3 \rangle$, we get $\alpha \le  F( \langle d_1, d_2, d_3 \rangle) \le (\delta_i + \delta_j)g - d_1 -d_2-d_3$,
		so $L \le (\delta_i + \delta_j)g + d_4 \le (\delta_i + \delta_j)g + \delta_i \delta_j$ by \eqref{876tbfgjdke3mejycnx}.
		Since $L = d_id_j = \delta_i \delta_j g$, we obtain $\delta_i \delta_j(g-1) \le (\delta_i + \delta_j)g$, so
		$$
		\textstyle    1 - \frac1g \le \frac1{\delta_i} + \frac1{\delta_j} .
		$$
		Condition \eqref{6547m6n78ehwvqP6543ewsxcv2bhe8rf7d} and $\gcd( g_{i,k}, g_{j,k} ) = 1$ imply that $g = g_{i,k} g_{j,k} \ge 6$,
		and \eqref{8728379rj93jd} gives $\frac1{\delta_i} + \frac1{\delta_j} \le \frac12+\frac13$; thus,
		$\frac 56 \le 1 - \frac1g \le \frac1{\delta_i} + \frac1{\delta_j} \le \frac56$.
		This implies that $g=6$ (so $\{g_{i,k}, g_{j,k}\} = \{2,3\}$) and that $\{\delta_i, \delta_j\} = \{2,3\}$.
		We may assume that $\delta_i = 2$ and  $\delta_j = 3$.
		Then \eqref{8728379rj93jd} gives $g_{i,k}=2$ and $g_{j,k}=3$.
		So \eqref{876tbfgjdke3mejycnx} implies that $d_4 \le \delta_i \delta_j = 6 < 9 = d_j$, contradicting $d_4 = \max(d_1,d_2,d_3,d_4)$.
	\end{proof}

	\begin{lemma}  \label {98iuj34nv90es3eikdfn}
		$m \le 6$
	\end{lemma}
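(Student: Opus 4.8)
The plan is to feed the hypothesis $\alpha\in\Nat\setminus\Gamma$ into a Frobenius bound and then renormalize so that $m$ is isolated. Since $(d_1,d_2,d_3,d_4)$ is well-formed we have $\gcd(d_1,d_2,d_3)=1$, and since $\langle d_1,d_2,d_3\rangle\subseteq\Gamma$ while $\alpha\notin\Gamma$, it follows that $\alpha\le F(d_1,d_2,d_3)$. Applying Proposition~\ref{BoundsFrobNbr}(b) with any ``center'' $i$, where $\{i,j,k\}=\{1,2,3\}$, gives $F(d_1,d_2,d_3)\le L_{i,j}+L_{i,k}-d_1-d_2-d_3$. Combining this with $L=\alpha+d_1+d_2+d_3+d_4=md_4$ yields, for every center $i$,
\[
(m-1)\,d_4\;\le\;L_{i,j}+L_{i,k}.
\]

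Next I would renormalize using the structure already extracted for $d_1,d_2,d_3$. By Lemma~\ref{mrh6t7w3h4cfap} the gcds $g_{1,2},g_{1,3},g_{2,3}$ exceed $1$, by \ref{hrj5687q32hnn} they are pairwise coprime, and $g=g_{1,2}\,g_{1,3}\,g_{2,3}$ by Lemma~\ref{98y772364tyet2hbZxmbv}(a). Setting $\delta_i=d_i/(g_{i,j}g_{i,k})\in\Nat^+$, one verifies the identities $d_i=(g/g_{j,k})\delta_i$, $L_{i,j}=g\delta_i\delta_j$ and $L_{1,2,3}=g\delta_1\delta_2\delta_3$, where $L_{1,2,3}=\lcm(d_1,d_2,d_3)$. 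Together with $L=f\,L_{1,2,3}$ (from $L/(d_1d_2d_3)=f/g$ and $L_{1,2,3}=d_1d_2d_3/g$), the coprimality $\gcd(g,d_4)=1$ gives $h:=\gcd(L_{1,2,3},d_4)=\gcd(\delta_1\delta_2\delta_3,d_4)$, whence $d_4=fh$ and $m=L_{1,2,3}/h$. Substituting these identities into the displayed inequality and cancelling $h$ produces the scale-free inequality
\[
(m-1)\,f\;\le\;m\Big(\tfrac{1}{\delta_j}+\tfrac{1}{\delta_k}\Big),
\]
valid for each center $i$; I would choose $i$ to be the index of the smallest $\delta_i$, so that $\delta_j,\delta_k$ are the two largest and the right-hand side is minimized.

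With $S:=\tfrac1{\delta_j}+\tfrac1{\delta_k}$, an elementary check of the possible values of $S$ when $\delta_j,\delta_k\ge2$ shows that either $S\le\tfrac56$ or $S=1$ (there is no value in between). When $S\le\tfrac56$ the inequality gives $m\le 1/(1-S)\le6$ if $f=1$, and a strictly smaller bound if $f\ge2$; the value $6$ is attained precisely by the extremal pattern $f=1$, $\{\delta_j,\delta_k\}=\{2,3\}$, which is what pins the constant in the statement. The main obstacle is the degenerate range in which the inequality carries no information, namely $f=1$ with $S\ge1$: this occurs exactly when the two largest of $\delta_1,\delta_2,\delta_3$ are both $2$ (patterns $\{1,2,2\},\{2,2,2\}$) or when at least two of the $\delta_i$ equal $1$ (patterns $\{1,1,t\}$), together with the small borderline case $f=2$ with all $\delta_i=1$. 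For these I would drop the inequality and argue purely arithmetically, exploiting Lemma~\ref{56nOp9zg54yu} in the form $m\nmid g_{i,j}\delta_i\delta_j$, the pairwise coprimality of $g_{1,2},g_{1,3},g_{2,3}$, and a Frobenius estimate for the full semigroup $\langle d_1,d_2,d_3,d_4\rangle$ that now uses $d_4$, to show that each such configuration either cannot occur or still forces $m\le6$. I expect this degenerate analysis, rather than the main inequality, to be the bulk of the work. Once $m\le6$ is established, it combines with $g\mid m$ (Lemma~\ref{98y772364tyet2hbZxmbv}(b)) and $g=g_{1,2}g_{1,3}g_{2,3}\ge2\cdot3\cdot5=30$ to yield the contradiction that completes the proof of Theorem~A.
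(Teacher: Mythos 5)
Your main computation is sound and follows a genuinely different route from the paper's. The identities $L_{i,j}=g\delta_i\delta_j$, $L_{1,2,3}=g\delta_1\delta_2\delta_3$, $d_4=fh$ and $m=L_{1,2,3}/h$ all check out, and they do yield $(m-1)f\le m\bigl(\tfrac1{\delta_j}+\tfrac1{\delta_k}\bigr)$, which disposes of every configuration except exactly the ones you list. (The paper instead argues by contradiction from $m>6$: the same Frobenius bound gives $\tfrac{1}{L/L_{1,2}}+\tfrac{1}{L/L_{1,3}}>\tfrac56$, Lemma \ref{iu7q623ruyehfj} rules out $L/L_{1,2}=1$ and $L/L_{1,3}=1$, so $L/L_{1,2}=2=L/L_{1,3}$, and a $2$-adic/parity analysis then produces a forbidden divisibility $d_2\mid d_3$; it never needs Lemma \ref{mrh6t7w3h4cfap}, nor $f$, $g$ or the $\delta_i$.)

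However, there is a genuine gap: the degenerate configurations — $f=1$ with patterns $\{1,1,t\}$, $\{1,2,2\}$, $\{2,2,2\}$, and $f=2$ with $\{1,1,1\}$ — are precisely where your inequality says nothing, and you prove nothing about them; you only name tools you would try, and you yourself concede this would be ``the bulk of the work.'' A proof cannot end there, so as written the argument is incomplete. The irony is that the identities you already derived close these cases almost immediately. First, $\gcd(\delta_i,\delta_j)=1$ for $i\neq j$: from $g_{i,j}=\gcd(d_i,d_j)=g_{i,j}\gcd(\delta_i g_{i,k},\delta_j g_{j,k})$ one gets $\gcd(\delta_i g_{i,k},\delta_j g_{j,k})=1$; hence the patterns $\{1,2,2\}$ and $\{2,2,2\}$ cannot occur at all. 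Second, if $f=1$ and $\delta_a=\delta_b=1$, $\delta_c=t$ (with $\{a,b,c\}=\{1,2,3\}$), then $d_4=fh=h$ divides $\delta_1\delta_2\delta_3=t$, which divides $d_c=t\,g_{c,a}g_{c,b}$, so $d_4\mid d_c$, contradicting Assumptions \ref{8273gbrfo933jd}. Third, if $f=2$ and all $\delta_i=1$, then $h=1$ and $d_4=fh=2$, so by \ref{hrj5687q32hnn}(b) and the maximality of $d_4$ all $d_i$ equal $2$, again contradicting \ref{8273gbrfo933jd}. With these three observations inserted, your argument becomes a complete proof, arguably cleaner than the paper's; without them, the hardest part of the lemma is left unproven.
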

	
	\begin{proof}
		Without loss of generality, we may assume that $d_1 \le d_2 \le d_3$; by \ref{8273gbrfo933jd}, it follows that $d_1 < d_2 < d_3 < d_4$.
		Since $\alpha \notin \Gamma$, we have $\alpha \notin \langle d_1, d_2, d_3 \rangle$, so Proposition \ref{BoundsFrobNbr}(b) gives
		$\textstyle L - \sum_{i=1}^4 d_i = \alpha \le F( \langle d_1, d_2, d_3 \rangle ) \le L_{1,2} + L_{1,3} - d_1 - d_2 - d_3$,
		so $(1 - \frac1m)L = L-d_4 \le L_{1,2} + L_{1,3}$ and hence $\frac1{L/L_{1,2}} + \frac1{L/L_{1,3}} \ge (1 - \frac1m)$.
		By contradiction, suppose that $m>6$. Then $\frac1{L/L_{1,2}} + \frac1{L/L_{1,3}} > \frac56$ (where ${L/L_{1,2}}$ and  ${L/L_{1,3}}$ are positive integers), so
		$$
		\textstyle
		\frac L{L_{1,2}} = 1
		\quad \text{or} \quad
		\frac L{L_{1,3}} = 1 
		\quad \text{or} \quad
		\frac L{L_{1,2}} = 2 = \frac L{L_{1,3}} .
		$$
		Also note that $g_{3,4} \mid \frac L{L_{1,2}}$  and $g_{2,4} \mid \frac L{L_{1,3}}$ by Lemma \ref{iu7q623ruyehfj}.
		If $\frac L{L_{1,2}} = 1$ then $d_3$ and $d_4$ divide $L_{1,2}$; since $g_{3,4}=1$ we obtain $d_3 d_4 \mid L_{1,2} \mid d_1 d_2$,
		which is impossible since $d_1 d_2 < d_3 d_4$.  So $\frac L{L_{1,2}} \neq 1$.
		If $\frac L{L_{1,3}} = 1$ then $d_2$ and $d_4$ divide $L_{1,3}$; since $g_{2,4}=1$ we obtain $d_2 d_4 \mid L_{1,3} \mid d_1 d_3$,
		which is impossible since $d_1 d_3 < d_2 d_4$.  So $\frac L{L_{1,3}} \neq 1$.
		It follows that 
		$$
		\textstyle \frac L{L_{1,2}} = 2 = \frac L{L_{1,3}} .
		$$
		If $p$ is a prime factor of $g_{2,4}$ then $p \nmid d_1$ and $p \nmid d_3$, so $p \nmid L_{1,3}$ and $p \mid L_{1,2}$, contradicting $L_{1,2}=L_{1,3}$.
		This shows that $g_{2,4}=1$, and the same argument gives $g_{3,4}=1$. Thus,
		$$
		g_{2,4} = 1 = g_{3,4} .
		$$
		Let $L_{1,2,3} = \lcm(d_1,d_2,d_3)$. Since $L_{1,2}=L_{1,3}$, we have $L_{1,2,3} = L_{1,2} = L/2$,
		so $v_2(L_{1,2,3})+1 = v_2(2L_{1,2,3}) = v_2(L) = \max( v_2(L_{1,2,3}) , v_2(d_4) )$ and hence $v_2(d_4) = v_2(L_{1,2,3})+1$. 
		So $d_4$ is even; since $g_{2,4} = 1 = g_{3,4}$, $d_2$ and $d_3$ are odd.
		Thus, $v_2(d_4) = v_2(L_{1,2,3})+1 = v_2(d_1) + 1$. So we can write
		$$
		d_1 = 2^r \delta_1 \quad \text{and} \quad d_4 = 2^{r+1} \delta_4, \quad \text{where $\delta_1 , \delta_4 \in \Nat^+$ are odd and $r \in \Nat$.}
		$$
		
		If $p$ is a prime factor of $g_{2,3}$ then $p \nmid d_1$, so $v_p(d_2) = v_p(L_{1,2}) = v_p(L_{1,3}) = v_p(d_3)$.
		The fact that  $v_p(d_2) = v_p(d_3)$ for each prime factor $p$ of $g_{2,3}$ implies that 
		$$
		d_2 = a b \quad \text{and} \quad d_3 = ac, \quad \text{where $a,b,c \in \Nat^+$ are odd and pairwise relatively prime.}
		$$
		
		Since $d_2$ is odd and divides $L = 2L_{1,3}$, we have $d_2 \mid L_{1,3}$;
		it follows that $b = \frac{d_2}{a}$ divides $\frac{L_{1,3}}a$, which divides $\frac{d_1 d_3}{a} = d_1 c$; so $b \mid d_1 c$.
		Since $\gcd(b,c)=1$, we get $b \mid d_1$.
		The fact that $\delta_4$ divides $\frac L2 = L_{1,2}$ implies that $\delta_4 \mid d_1 d_2$; since $g_{2,4}=1$, it follows that $\delta_4 \mid d_1$.
		So we have  $b \mid d_1$,  $\delta_4 \mid d_1$ and $\gcd(b,\delta_4)=1$ (because $g_{2,4}=1$); this implies that $b \delta_4 \mid d_1$.
		Since $b \delta_4$ is odd, it follows that $2^r b \delta_4 \mid d_1$ and hence $2^r b \delta_4 \le d_1 < d_4 = 2^{r+1}\delta_4$.
		This implies that $b=1$. So $d_2 = a$ divides $d_3$, a contradiction.
	\end{proof}

	\begin{nothing}  \label {EndOfProof}
		{\bf End of the proof of Theorem A.}
		Lemma \ref{98iuj34nv90es3eikdfn} gives $m \le 6$, and we have $g \mid m$ by Lemma \ref{98y772364tyet2hbZxmbv}(b).
		So $g \mid p^r q^s$ for some prime numbers $p,q$ and some $r,s \in \Nat$.
		Since Lemma \ref{98y772364tyet2hbZxmbv}(a) gives $g = g_{1,2} \, g_{1,3} \, g_{2,3}$ where $g_{1,2}, g_{1,3}, g_{2,3}$ are pairwise relatively prime,
		it follows that one of $g_{1,2}, g_{1,3}, g_{2,3}$  is equal to $1$, contradicting Lemma \ref{mrh6t7w3h4cfap}.
		This shows that Assumptions \ref{8273gbrfo933jd} lead to a contradiction.
		By Remark \ref{iuytcvbCi0E23m4n567ydh}, this completes the proof of Theorem A.
	\end{nothing}
	
	\bibliographystyle{plain}
	\bibliography{RationalityArticleV6}
\end{document}